\documentclass[oneside,english]{amsart}
\usepackage{lmodern}
\usepackage[T1]{fontenc}
\usepackage[latin9]{inputenc}
\usepackage{amsthm}
\usepackage{amssymb}
\PassOptionsToPackage{normalem}{ulem}
\usepackage{ulem}

\makeatletter
\numberwithin{equation}{section}
\numberwithin{figure}{section}
\theoremstyle{plain}
\newtheorem{thm}{\protect\theoremname}[section]
  \theoremstyle{definition}
  \newtheorem{defn}[thm]{\protect\definitionname}
  \theoremstyle{definition}
  \newtheorem{example}[thm]{\protect\examplename}
  \theoremstyle{plain}
  \newtheorem{fact}[thm]{\protect\factname}
  \theoremstyle{remark}
  \newtheorem{rem}[thm]{\protect\remarkname}
  \theoremstyle{plain}
  \newtheorem{prop}[thm]{\protect\propositionname}
  \theoremstyle{remark}
  \newtheorem*{claim*}{\protect\claimname}
  \theoremstyle{definition}
  \newtheorem{problem}[thm]{\protect\problemname}
  \theoremstyle{plain}
  \newtheorem{cor}[thm]{\protect\corollaryname}
  \theoremstyle{plain}
  \newtheorem{lem}[thm]{\protect\lemmaname}


\pagestyle{headings}
\usepackage{a4wide}
\usepackage{amssymb}
\usepackage{url}
\linespread{1.5}
\usepackage{ifthen}\usepackage{mathrsfs}\usepackage{pict2e}\usepackage{xargs}\usepackage{xspace}


\newcommandx{\intersection}[2][1 = undefined, 2 = undefined]{
  \ifthenelse{\equal{#1}{undefined}}{\cap}{
    \ifthenelse{\equal{#2}{undefined}}{\bigcap #1}{\bigcap_{#1} #2}
  }
}



\newcommandx{\union}[2][1 = undefined, 2 = undefined]{
  \ifthenelse{\equal{#1}{undefined}}{\cup}{
    \ifthenelse{\equal{#2}{undefined}}{\bigcup #1}{\bigcup_{#1} #2}
  }
}

\makeatother

\usepackage{babel}
  \providecommand{\claimname}{Claim}
  \providecommand{\corollaryname}{Corollary}
  \providecommand{\definitionname}{Definition}
  \providecommand{\examplename}{Example}
  \providecommand{\factname}{Fact}
  \providecommand{\lemmaname}{Lemma}
  \providecommand{\problemname}{Problem}
  \providecommand{\propositionname}{Proposition}
  \providecommand{\remarkname}{Remark}
\providecommand{\theoremname}{Theorem}

\begin{document}
\global\long\def\Aut{\operatorname{Aut}}
\global\long\def\calB{\mathscr{B}}
\global\long\def\calI{\mathcal{I}}
\global\long\def\ball#1#2{\calB(#1, #2)}
\global\long\def\calU{\mathcal{U}}
\global\long\def\calV{\mathcal{V}}
\global\long\def\Cantorspace{\functions{\omega}{2}}
\global\long\def\closure#1{\overline{#1}}
\global\long\def\concatenation{\mathrel{\smallfrown}}
\global\long\def\constantsequence#1#2{\left(#1\right){}^{#2}}
\global\long\def\diameter#1{\mathrm{diam}\left(#1\right)}
\global\long\def\Ezero{\mathbb{E}_{0}}
\global\long\def\from{\colon}
\global\long\def\Fsigma{F_{\sigma}}
\global\long\def\functions#1#2{#2^{#1}}
\global\long\def\heightcorrection#1{\raisebox{0pt}[0pt][0pt]{#1}}
\global\long\def\ideal#1{\calI_{#1}}
\global\long\def\image#1#2{#1\left[#2\right]}
\global\long\def\inverse#1{#1^{-1}}
\global\long\def\mathand{\text{ and }}
\global\long\def\N{\mathbb{N}}
\global\long\def\orbitequivalencerelation#1#2{E_{#1}^{#2}}
\global\long\def\pair#1#2{\left(#1,#2\right)}
\global\long\def\relationpower#1#2{#1^{\left(#2\right)}}
\global\long\def\restriction#1#2{#1 \upharpoonright#2}
\global\long\def\setcomplement#1{\twiddle#1}
\global\long\def\sets#1#2{\left[#2\right]{}^{#1}}
\global\long\def\suchthat{\mid}
\global\long\def\twiddle{\raisebox{1pt}{\scalebox{.75}{$\mathord{\sim}$}}}
\global\long\def\C{\mathfrak{C}}
\global\long\def\acl{\operatorname{acl}}
\global\long\def\tp{\operatorname{tp}}
\global\long\def\qf{\operatorname{qf}}
\global\long\def\Nn{\mathbb{N}}
\global\long\def\id{\operatorname{id}}
\global\long\def\SS{\mathcal{P}}
\global\long\def\EM{\operatorname{EM}}
\global\long\def\dcl{\operatorname{dcl}}
\global\long\def\Autf{\operatorname{Aut}f_{L}}
\global\long\def\eq{\operatorname{eq}}

\global\long\def\pamod#1{\pmod#1}

\global\long\def\nf{\mbox{nf}}
\global\long\def\Uu{\mathcal{U}}
\global\long\def\dom{\operatorname{dom}}
\global\long\def\concat{\smallfrown}
\global\long\def\Nn{\mathbb{N}}
\global\long\def\mathrela#1{\mathrel{#1}}
\global\long\def\twiddle{\mathord{\sim}}
\global\long\def\stab{\operatorname{stab}}
 \global\long\def\x{\times}
\global\long\def\diam{\operatorname{diam}}
\global\long\def\EZero{\mathbb{E}_{0}}
\global\long\def\sequence#1#2{\left\langle #1\left|\,#2\right.\right\rangle }
\global\long\def\set#1#2{\left\{  #1\left|\,#2\right.\right\}  }
\global\long\def\cardinal#1{\left|#1\right|}
\global\long\def\calO{\mathcal{O}}
\global\long\def\mathordi#1{\mathord{#1}}
\global\long\def\Ezero{\EZero}
\global\long\def\xx{\mathbf{x}}

\global\long\def\NTPT{\operatorname{NTP}_{\operatorname{2}}}
\global\long\def\ist{\operatorname{ist}}
\global\long\def\C{\mathfrak{C}}
\global\long\def\alt{\operatorname{alt}}
\global\long\def\Ff{\mathbb{F}}
\global\long\def\Ll{\mathfrak{L}}
\global\long\def\calU{\mathcal{U}}

\title{An embedding theorem of $\EZero$ with model theoretic applications}

\author{Itay Kaplan and Benjamin D. Miller}
\begin{abstract}
We provide a new criterion for embedding $\EZero$, and apply it to
equivalence relations in model theory. This generalize the results
of the authors and Pierre Simon on the Borel cardinality of Lascar
strong types equality, and Newelski's results about pseudo $F_{\sigma}$
groups. 
\end{abstract}

\thanks{The authors were supported in part by SFB Grant 878.}

\address{Itay Kaplan \\
The Hebrew University of Jerusalem\\
Einstein Institute of Mathematics \\
Edmond J. Safra Campus, Givat Ram\\
Jerusalem 91904, Israel}

\email{kaplan@math.huji.ac.il}

\urladdr{https://sites.google.com/site/itay80/ }

\address{Benjamin Miller \\
Institut f\"ur mathematische Logik und Grundlagenforschung \\
Fachbereich Mathematik und Informatik \\
Universit\"at M\"unster \\
Einsteinstra{\ss}e 62 \\
48149 M\"unster \\
Germany }

\email{ben.miller@uni-muenster.de}

\urladdr{http://wwwmath.uni-muenster.de/u/ben.miller/ }

\subjclass[2010]{03C45, 03E15}

\maketitle

\section{Introduction}

Given two topological spaces $X$ and $X'$ and two equivalence relations
$E$ and $E'$ respectively on $X$ and $X'$, we say that $E$ is
\emph{Borel reducible} to $E'$ if there is a Borel map $f$ from
$X$ to $X'$ such that $x\mathrela Ey\iff f(x)\mathrela{E'}f(y)$
for all $x,y\in X$. The quasi-order of Borel reducibility of Borel
equivalence relations on Polish spaces is a well-studied object in
descriptive set theory, and enjoys a number of remarkable properties.
One of them is given by the Harrington-Kechris-Louveau dichotomy,
which asserts that a Borel equivalence relation is either smooth (Borel
reducible to equality on $2^{\omega}$) or at least as complicated
as $\EZero$ (eventual equality on $2^{\omega}$). In other words,
$\EZero$ is the first non-smooth Borel equivalence relation. 

In Section \ref{sec:A-sufficient-condition}, we provide a new criterion
for being non-smooth. We also translate this criterion to another
context, that of strong Choquet spaces.

In the majority of Section \ref{sec:Applications}, we apply this
criterion to bounded invariant equivalence relation in model theory. 

Suppose $T$ is a complete first order theory and $\C$ a $\kappa$-saturated
model for some large $\kappa$. If $E$ is an equivalence relation
on $\C^{\alpha}$ which is a countable union of $\emptyset$-type
definable sets $U_{n}$ (i.e., $U_{n}$ is definable by intersection
of parameter free formulas), we say that it is bounded when the number
of classes is smaller than $\kappa$. We call $E$ a\emph{ bounded
invariant pseudo $F_{\sigma}$} equivalence relation. Such relations
appear naturally in model theory, and include the finest bounded invariant
equivalence relation: equality of Lascar strong types --- $\equiv_{L}^{\alpha}$.
It turns out that if the $T$ and $\alpha$ are countable, one can
interpret $E$ as an (honest) $F_{\sigma}$ equivalence relation on
a compact Polish space in a very natural way, which equips $E$ with
a well defined Borel cardinality. This was done for Lascar strong
types in \cite{PillayKrupinskiSolecki}, where many examples are computed,
but in fact works for any $E$. It is explained in details in Subsections
\ref{sub:Context} and \ref{sub:Countable-language}. 

If $E$ is an invariant bounded pseudo $F_{\sigma}$ equivalence relation,
we can assume by compactness that there are $\emptyset$-type definable
sets $U_{n}$ which are reflexive, symmetric and $U_{n}\circ U_{n}\subseteq U_{n+1}$
with $E=\bigcup_{n<\omega}U_{n}$. Such a sequence $\sequence{U_{n}}{n<\omega}$
is called a \emph{normal form} of $E$. 

In \cite[Corollary 1.12]{Newelski}, Newelski proved that if $E$
is an invariant pseudo $F_{\sigma}$ equivalence relation on $\C^{\alpha}$
with normal form $\sequence{U_{n}}{n<\omega}$, and $X$ is a type
definable set, all its elements have the same type over $\emptyset$,
then either $E\upharpoonright X=U_{n}$ for some $n$ or $\left|X/E\right|\geq2^{\aleph_{0}}$. 

({*}) Equivalently, if there is some $x\in X$ such that $E\upharpoonright\left[x\right]_{E}$
is not already $U_{n}\upharpoonright\left[x\right]_{E}$ for some
$n$ then $\left|X/E\right|\geq2^{\aleph_{0}}$. 

He continued to prove \cite[Theorem 3.1]{Newelski} that if $H$ is
an $\emptyset$-type definable group and $G\leq H$ is generated by
countably many sets $V_{n}$, each $\emptyset$-type definable, then
$G$ is type definable iff $G$ is generated by finitely many $V_{n}$'s
in finitely many steps and if $G$ is not type definable then $\left[H:G\right]\geq2^{\aleph_{0}}$.
In that case, if moreover $T$ is small (has only countably many types
over $\emptyset$) and $H$ consists of finite tuples, then $\left[H:G\right]$
is unbounded. Let $X=H$ and $E=E_{G}^{H}$ be the orbit equivalence
relation of the action of $G$ on $H$ (so it is an invariant pseudo
$F_{\sigma}$ equivalence relation).

({*}{*}) In this language this is equivalent to: if for some $x\in X$,
$E\upharpoonright\left[x\right]_{E}$ is not already generated by
finitely many of the $V_{n}$'s in finitely many steps, then $\left|X/E\right|\geq2^{\aleph_{0}}$. 

An important example of such a pair $\left(G,H\right)$ is $\left(G,G_{\emptyset}^{000}\right)$
where $G$ is $\emptyset$-type definable and $G_{\emptyset}^{000}$
is the minimal $\emptyset$-invariant bounded index subgroup. See
\cite{gismatulin} for more. 

In \cite{BorelCard} the authors dealt with the case where $X$ was
a $KP$-strong type and $E=\mathordi{\equiv_{L}^{\alpha}}$ The main
result there is that if $E$ is not trivial on $X$ then it is non-smooth.
This went through a stronger theorem \cite[Main Theorem A]{BorelCard}
that stated that:

({*}{*}{*}) If $Y$ is a pseudo $G_{\delta}$, $\equiv_{L}^{\alpha}$-invariant
subset of $\C^{\alpha}$ and for some $x\in Y$, $\left[x\right]_{\mathordi{\equiv_{L}^{\alpha}}}$
has unbounded Lascar diameter, then $\mathordi{\equiv_{L}^{\alpha}}\upharpoonright Y$
is non-smooth. ``Unbounded Lascar diameter'' means exactly that
it is not the case that $\mathordi{\equiv_{L}^{\alpha}}\upharpoonright\left[x\right]_{\mathordi{\equiv_{L}^{\alpha}}}=U_{n}\upharpoonright\left[x\right]_{\mathordi{\equiv_{L}^{\alpha}}}$
for some $n$, where $U_{n}\left(a,b\right)$ is the type saying that
the Lascar distance between $a$ and $b$ is at most $n$. (This is
a normal form for $\equiv_{L}^{\alpha}$.)

Here we try to generalize ({*}), ({*}{*}) and ({*}{*}{*}) in a uniform
way using the results from Section \ref{sec:A-sufficient-condition}.
So the idea is to prove, in each case (when everything is countable),
that if $Y$ is a pseudo $G_{\delta}$, $E$-invariant and for some
$x\in Y$, $E\upharpoonright\left[x\right]_{E}$ is not already $U_{n}\upharpoonright\left[x\right]_{E}$
for some $n$, then $E\upharpoonright X$ is not smooth. 

While we do not successfully generalize ({*}), we do prove it if there
is a subgroup of $\Aut\left(\C\right)$ which acts nicely on $\left[x\right]_{E}$,
for instance when it is transitive on this class and preserves all
classes. This is done in Subsection \ref{sub:First-application:-invariant},
and includes also ({*}{*}{*}) (the subgroup in that case is $\Autf\left(\C\right)$).
({*}{*}) is successfully generalized and moreover stated for group
actions (with an extra technical assumption called ``shiftiness''
which holds in the case where the action is free). 

\medskip{}

We would like to thank Ziv Shami and Pierre Simon for some useful
conversations.

\section{\label{sec:A-sufficient-condition}A sufficient condition for embedding
$\Ezero$}

\subsection{\label{sub:Preliminaries}Preliminaries}
\begin{defn}
Suppose $X$ and $Y$ are topological spaces, and $E$ and $F$ are
Borel equivalence relations on $X$ and $Y$. We say that a function
$f:X\to Y$ is a \emph{reduction} of $E$ to $F$ if for all $x_{0},x_{1}\in X$,
$\left(x_{0},x_{1}\right)\in E$ iff $\left(f\left(x_{0}\right),f\left(x_{1}\right)\right)\in F$.
\begin{enumerate}
\item We say that $E$ is\emph{ Borel reducible} to $F$, denoted by $E\leq_{B}F$,
when there is a Borel reduction $f:X\to Y$ of $E$ to $F$.
\item We write $E\sqsubseteq_{c}F$ when there is a continuous injective
reduction $f:X\to Y$ of $E$ to $F$.
\item We say that $E$ and $F$ are \emph{Borel bi-reducible}, denoted by
$E\sim_{B}F$, when $E\leq_{B}F$ and $F\leq_{B}E$. 
\item We write $E<_{B}F$ to mean that $E\leq_{B}F$ but $E\not\sim_{B}F$. 
\end{enumerate}
\end{defn}
\begin{example}
For a Polish space $X$, the relations $\Delta\left(X\right)$ denotes
equality on $X$. Then $\Delta\left(1\right)<_{B}\Delta\left(2\right)<_{B}\ldots<_{B}\Delta\left(\omega\right)<_{B}\Delta\left(2^{\omega}\right)$. \end{example}
\begin{defn}
We say that $E$ is \emph{smooth} iff $E\leq_{B}\Delta\left(2^{\omega}\right)$. \end{defn}
\begin{fact}
\cite{silver} (Silver dichotomy) For all Borel equivalence relations
$E$ on Polish spaces, $E\leq_{B}\Delta\left(\omega\right)$ or $\Delta\left(2^{\omega}\right)\sqsubseteq_{c}E$
. It follows that $\Delta\left(2^{\omega}\right)$ is the successor
of $\Delta\left(\omega\right)$.
\end{fact}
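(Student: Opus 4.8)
The statement is Silver's perfect-set dichotomy for the classes of a coanalytic --- in particular Borel --- equivalence relation, together with a formal consequence about $<_{B}$. I would not look for a new argument here but reproduce Harrington's effective proof, so the plan below is really a summary of that proof.

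The first move is to pass to the effective setting: a Borel equivalence relation is $\Pi^1_1$, so after fixing a real $z$ coding it we may assume $X=\omega^{\omega}$ and that $E$ is lightface $\Pi^1_1$ (all lightface pointclasses below being taken relative to $z$). Set
\[
Z=\bigcup\bigl\{\,A \bigm| A\text{ is a lightface }\Sigma^1_1\text{ set meeting only countably many }E\text{-classes}\,\bigr\}.
\]
Since there are only countably many lightface $\Sigma^1_1$ sets, $Z$ is itself $\Sigma^1_1$ and still meets only countably many $E$-classes; moreover every $\Sigma^1_1$ set \emph{not} contained in $Z$ meets uncountably many $E$-classes, since otherwise it would be one of the sets defining $Z$ and hence contained in $Z$. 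If $Z=X$, then $E$ has at most countably many classes, each of which is a section of the Borel set $E$ and therefore Borel, so the map sending a point to the index of its class is a Borel reduction of $E$ to $\Delta(\omega)$: this is the first alternative.

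Otherwise $X\setminus Z$ is a nonempty $\Pi^1_1$ set, and the plan is to build a continuous injective reduction $f\colon2^{\omega}\to X$ of $\Delta(2^{\omega})$ to $E$ by a Cantor scheme. Endow $X$ with the Gandy--Harrington topology $\tau$ generated by the lightface $\Sigma^1_1$ sets; recall that $\tau$ refines the usual topology, is strong Choquet, and restricts to a Polish topology on a $\tau$-dense $\Sigma^1_1$ set (the set of non-trivial points). One constructs recursively nonempty $\Sigma^1_1$ sets $U_{s}$ for $s\in2^{<\omega}$ with $U_{s}\nsubseteq Z$, $U_{s^{\frown}i}\subseteq U_{s}$, usual diameter of $U_{s}$ at most $2^{-|s|}$, and such that no element of $U_{s^{\frown}0}$ lies in the same $E$-class as any element of $U_{s^{\frown}1}$. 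At a successor step one uses that $U_{s}\nsubseteq Z$ forces $U_{s}\setminus Z$ to meet uncountably many classes, and in particular that $U_{s}\setminus Z$ contains $a,b$ with $(a,b)\notin E$; since $E$ is coanalytic, $(U_{s}\times U_{s})\setminus E$ is $\Sigma^1_1$, and one uses the separation and basis properties of $\tau$ --- together with the fact that, for $a\notin Z$, every $\Sigma^1_1$ neighbourhood of $a$ already meets uncountably many classes through points of $X\setminus Z$ --- to extract $\Sigma^1_1$ boxes $U_{s^{\frown}0}\ni a$ and $U_{s^{\frown}1}\ni b$ inside $(U_{s}\times U_{s})\setminus E$ with small factors. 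Tracking an auxiliary point at each node, so that along every branch one is following player II's winning strategy in the strong Choquet game for $(X,\tau)$, one gets $\bigcap_{n}U_{x\upharpoonright n}\neq\emptyset$ for every $x\in2^{\omega}$, and the shrinking usual diameters reduce this to a single point $f(x)$. Then $f$ is continuous, and for $x\neq y$ the points $f(x)$ and $f(y)$ land in $E$-separated sets (split at the first coordinate on which $x$ and $y$ differ), so $(f(x),f(y))\notin E$; hence $f$ is an injective reduction and $\Delta(2^{\omega})\sqsubseteq_{c}E$, the second alternative. The successor clause is then immediate: $\Delta(\omega)<_{B}\Delta(2^{\omega})$ because there is no Borel injection of $2^{\omega}$ into $\omega$, and if $\Delta(\omega)<_{B}E$ then $E\not\leq_{B}\Delta(\omega)$, so the dichotomy forces $\Delta(2^{\omega})\sqsubseteq_{c}E$ and hence $E\not<_{B}\Delta(2^{\omega})$; thus nothing lies strictly between.

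The main obstacle is the successor step of the Cantor scheme --- producing the $E$-separated $\Sigma^1_1$ boxes that still avoid $Z$ --- and making it compatible with the limit step, where an arbitrary decreasing sequence of nonempty $\Sigma^1_1$ sets need not have nonempty intersection: it is precisely the strong Choquet property of the Gandy--Harrington topology (equivalently, Polishness of $\tau$ on the non-trivial points) that keeps the branch intersections alive, and organizing the recursion so that both requirements hold simultaneously is the technical heart of Harrington's argument.
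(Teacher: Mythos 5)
The paper states this as a cited Fact from \cite{silver} and offers no proof of its own, so there is no internal argument to compare against; I can only assess your sketch on its own terms. What you give is indeed the standard modern route (Harrington's effective proof via the Gandy--Harrington topology), and the overall architecture is right: isolate the ``thin'' part covered by $\Sigma^1_1$ sets meeting few classes, and on the remainder run a Cantor scheme, using that the Gandy--Harrington topology is strong Choquet to keep the branch intersections nonempty, to produce a continuous injection onto pairwise $E$-inequivalent points. The deduction of the successor clause from the dichotomy is also fine.

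There is, however, a real gap at the very first step. You assert that $Z=\bigcup\{A : A\text{ lightface }\Sigma^1_1,\ A\text{ meets only countably many }E\text{-classes}\}$ is $\Sigma^1_1$ ``since there are only countably many lightface $\Sigma^1_1$ sets.'' That inference is not valid: a union of lightface $\Sigma^1_1$ sets over an arbitrary set of indices $e$ is in general $\Sigma^1_1$ only in that index set, and $\Sigma^1_1$ is closed under such unions only when the index set is itself sufficiently definable. The condition ``$S_e$ meets only countably many $E$-classes'' has no obvious low complexity bound (a priori it is roughly $\Sigma^1_2$, and even rewriting it via a basis theorem as ``every point of $S_e$ is $E$-equivalent to a $\Delta^1_1(e)$ point'' leaves a $\Pi^1_1$ condition on $e$, so $\exists e(\Sigma^1_1\wedge\Pi^1_1)$ is still not visibly $\Sigma^1_1$). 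Establishing that the thin part is $\Sigma^1_1$ --- usually by defining it instead as the union of lightface $\Sigma^1_1$ sets $A$ with $A\times A\subseteq E$ and invoking a $\Sigma^1_1$-reflection or norm argument --- is one of the genuine technical cores of Harrington's proof, on a par with the Cantor-scheme bookkeeping you flag at the end; your sketch treats it as automatic when it is not.
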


\begin{fact}
\label{fac:Closed-equivalence-relations}Closed equivalence relations
are smooth. \end{fact}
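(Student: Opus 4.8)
The plan is to deduce this from the Harrington--Kechris--Louveau dichotomy recalled in the introduction rather than to argue directly; note first that a closed equivalence relation is in particular Borel, so the dichotomy applies to it.

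So I would argue by contradiction. Suppose $E$ is a closed equivalence relation on a Polish space $X$ and $E$ is not smooth. By the dichotomy, $\Ezero \sqsubseteq_c E$; fix a witnessing continuous injection $g \colon \Cantorspace \to X$, so that $a \mathrel{\Ezero} b \iff g(a) \mathrel E g(b)$ for all $a, b \in \Cantorspace$. The map $g \x g \colon \Cantorspace \x \Cantorspace \to X \x X$ is continuous, and the reduction property gives $(g \x g)^{-1}(E) = \Ezero$. Since $E$ is closed, $\Ezero$ would then be the preimage of a closed set under a continuous map, hence closed in $\Cantorspace \x \Cantorspace$. But $\Ezero$ is not closed: writing $s_{n} \in \Cantorspace$ for the sequence with $s_{n}(k) = 1$ for $k < n$ and $s_{n}(k) = 0$ for $k \geq n$, each $s_{n}$ differs from $0^{\omega}$ in only finitely many coordinates, so $(s_{n}, 0^{\omega}) \in \Ezero$, whereas $s_{n} \to 1^{\omega}$ and $(1^{\omega}, 0^{\omega}) \notin \Ezero$. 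This contradiction completes the proof.

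The one point that genuinely requires care is that one must invoke the \emph{continuous}-embedding form $\Ezero \sqsubseteq_c E$ of the dichotomy: mere Borel reducibility $\Ezero \leq_B E$ would not be enough, since a Borel preimage of a closed set need not be closed. This is exactly the content the Harrington--Kechris--Louveau theorem supplies, so there is no real obstacle here. It is worth remarking, however, that no routine direct argument is available: the naive idea of sending $x$ to its (closed) class $[x]_{E}$ inside the space $F(X)$ of closed subsets of $X$ with the Effros Borel structure fails, because the $E$-saturation of an open set need not be Borel even when $E$ is a closed equivalence relation, so the class map need not be Borel. Thus the use of the dichotomy (or some comparably strong input) is genuinely necessary.
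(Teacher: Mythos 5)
Your argument is correct. The paper states this Fact without proof as standard background, but your deduction from the Harrington--Kechris--Louveau dichotomy is exactly parallel to what the paper itself does later when it needs the related statement that a Borel equivalence relation with $G_{\delta}$ classes must be smooth (see the proof of the corollary on compact group actions): apply HKL, pull a topological regularity property back along the continuous embedding, and observe that $\Ezero$ lacks it. Your side remark is also accurate --- one can build a closed, smooth $E$ for which the saturation of an open set is a proper analytic set, so the naive map $x \mapsto [x]_{E}$ into $F(X)$ can genuinely fail to be Borel --- though the conclusion that ``some comparably strong input'' is \emph{necessary} overreaches a bit: there are direct arguments avoiding HKL (for instance, Srivastava's theorem that a Borel equivalence relation with all classes $G_{\delta}$ is smooth, proved via a reflection argument rather than the dichotomy), which is presumably why the authors list this Fact ahead of HKL in the first place.
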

\begin{example}
Let $\EZero$ be the following equivalence relation on the Cantor
space $2^{\omega}$: $\left(\eta,\nu\right)\in\EZero$ iff there exists
some $n<\omega$ such that for all $m>n$, $\eta\left(m\right)=\nu\left(m\right)$. \end{example}
\begin{fact}
\label{fac:E_0 is not definable smooth}The relation $\EZero$ is
non-smooth.
\end{fact}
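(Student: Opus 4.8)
The plan is to deduce non-smoothness of $\EZero$ from its ergodicity with respect to the coin-flipping measure. Equip $2^\omega$ with the product measure $\mu$ given by the uniform measure on $\{0,1\}$ in each coordinate. Note that $\EZero$ is the orbit equivalence relation of the group of finitely supported permutations of coordinates together with finite coordinate flips, and each such map is a $\mu$-preserving homeomorphism of $2^\omega$. Since whether $\eta$ lies in a given $\EZero$-invariant Borel set is insensitive to finite modifications of $\eta$, the Kolmogorov $0$--$1$ law applies and shows that $\EZero$ is $\mu$-ergodic: every $\EZero$-invariant Borel subset of $2^\omega$ is $\mu$-null or $\mu$-conull. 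This ergodicity statement (equivalently, the $0$--$1$ law) is the only genuinely substantive ingredient; I expect it to be the main obstacle, everything afterwards being bookkeeping.

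Next I would record the consequence that every $\EZero$-invariant Borel function $f\from 2^\omega\to 2^\omega$ is $\mu$-almost everywhere constant. For each $s\in 2^{<\omega}$ the preimage $f^{-1}(N_s)$ of the basic clopen set $N_s=\set{\nu}{s\subseteq\nu}$ is Borel and $\EZero$-invariant, hence $\mu$-null or $\mu$-conull. For fixed length $n$ the sets $N_s$ partition $2^\omega$, so exactly one $s$ of length $n$ has $\mu$-conull preimage; intersecting two such preimages of different lengths shows these strings form a chain, hence cohere into a single branch $y\in 2^\omega$. Then $f^{-1}(\{y\})=\intersection[n] f^{-1}(N_{\restriction{y}{n}})$ is a countable intersection of $\mu$-conull sets, so $\mu$-conull, i.e. $f$ equals $y$ off a $\mu$-null set.

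Finally I would argue by contradiction: suppose $\EZero$ is smooth, witnessed by a Borel reduction $f\from 2^\omega\to 2^\omega$ of $\EZero$ to $\Delta(2^\omega)$. Being a reduction to equality, $f$ is constant on $\EZero$-classes, hence $\EZero$-invariant, so by the previous step $f$ takes a single value $y_{0}$ on a $\mu$-conull set $C$. But $f$ is a reduction, so the nonempty set $f^{-1}(\{y_{0}\})\supseteq C$ is exactly one $\EZero$-class, and every $\EZero$-class is countable (each is a countable union of finite sets of sequences agreeing on a tail), hence $\mu$-null. Thus $C$ is simultaneously $\mu$-conull and $\mu$-null, a contradiction; therefore $\EZero$ is non-smooth.

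As an alternative finish avoiding measure theory entirely, one can run the same argument with Baire category in place of $\mu$: every $\EZero$-class is dense and $\EZero$ is generated by homeomorphisms, so by the topological $0$--$1$ law $\EZero$ is generically ergodic, invariant Borel functions are comeager-constant, and $\EZero$-classes, being countable, are meager --- again contradicting the existence of a Borel reduction to $\Delta(2^\omega)$.
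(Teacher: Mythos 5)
Your argument is correct. The paper itself states this result as a bare Fact with neither proof nor citation, treating it as folklore, so there is no paper proof to compare against; your write-up supplies exactly the standard argument one would expect in its place. The measure-theoretic route you take (Kolmogorov $0$--$1$ law for the coin-flipping measure $\mu$ $\Rightarrow$ $\EZero$ is $\mu$-ergodic $\Rightarrow$ every $\EZero$-invariant Borel map to $2^\omega$ is $\mu$-a.e.\ constant $\Rightarrow$ a Borel reduction to $\Delta(2^\omega)$ would make one countable $\EZero$-class conull, contradiction) is the canonical proof, and your handling of each step is sound: in particular, the coherence of the conull basic clopen preimages into a single branch is argued correctly via disjointness of $N_s$, $N_t$ forcing one of the two preimages to be null. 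Two minor remarks, neither an error: first, you do not need the finitely supported permutations in describing the generating group --- the finite coordinate flips alone already generate $\EZero$ --- though including them is harmless since each such permutation fixes every $\EZero$-class setwise; second, when you say $f^{-1}(\{y_0\})$ ``is exactly one $\EZero$-class,'' the precise statement is that it is either empty or a single class, and nonemptiness is supplied by $C\subseteq f^{-1}(\{y_0\})$ with $\mu(C)=1$. Your Baire-category alternative is likewise a standard and correct variant, trading the $0$--$1$ law for generic ergodicity via Kuratowski--Ulam and the fact that countable sets are meager in $2^\omega$.
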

In addition, we have the following dichotomy:
\begin{fact}
\label{fac:Harrington-Kechris-Louveau-dich} \cite{harringtonKechrisLouveau}
(Harrington-Kechris-Louveau dichotomy) For every Borel equivalence
relation $E$ on a Polish space either $E\leq_{B}\Delta\left(2^{\omega}\right)$
(i.e., $E$ is smooth) or $\EZero\sqsubseteq_{c}E$. It follows that
$\EZero$ is the successor of $\Delta\left(2^{\omega}\right)$. 
\end{fact}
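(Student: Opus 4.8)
The plan is to prove the dichotomy by the Gandy--Harrington (effective descriptive set theory) method of \cite{harringtonKechrisLouveau}, and then to read off the ``successor'' clause. First, since $\leq_{B}$ and $\sqsubseteq_{c}$ are unaffected by relativizing to a real parameter and a Borel code for $E$ can be absorbed into that parameter, it suffices to treat a \emph{lightface} $\Delta^{1}_{1}$ equivalence relation $E$ on $2^{\omega}$. Put on $2^{\omega}$ the Gandy--Harrington topology $\tau$ generated by the lightface $\Sigma^{1}_{1}$ sets. Two features of $\tau$ drive everything: it is strong Choquet and has a countable ``basis'' of $\Sigma^{1}_{1}$ sets; and, by the Gandy basis theorem, on the $\tau$-open $\tau$-dense set $X_{1}=\{x\in 2^{\omega}\mid \omega_{1}^{x}=\omega_{1}^{\mathrm{CK}}\}$ the topology $\tau$ is actually Polish, each $\Sigma^{1}_{1}$ set being $\tau$-clopen there. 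Hence a decreasing sequence of nonempty $\Sigma^{1}_{1}$ subsets of $X_{1}$ with $\tau$-diameters tending to $0$ has a unique honest point of $2^{\omega}$ in its intersection --- which is what turns a recursive scheme into a genuine continuous map.

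Now the basic case split. By an effective reflection/boundedness argument one produces a $\Delta^{1}_{1}$, $E$-invariant set $Z$ --- morally, the largest piece of $2^{\omega}$ on which $E$ is smooth --- such that $E\upharpoonright Z$ is smooth (indeed witnessed by a $\Delta^{1}_{1}$ reduction to $\Delta(2^{\omega})$), with the property that no nonempty $\Sigma^{1}_{1}$ set disjoint from $Z$ carries a $\Delta^{1}_{1}$ transversal. If $Z=2^{\omega}$ then $E$ itself is smooth and we are in the first alternative. Otherwise $Y:=2^{\omega}\setminus Z$ is a nonempty $\Sigma^{1}_{1}$ set, and the goal becomes to build a continuous embedding of $\EZero$ into $E\upharpoonright Y$, which yields $\EZero\sqsubseteq_{c}E$.

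The embedding is built by a Cantor-scheme recursion. Construct nonempty $\Sigma^{1}_{1}$ sets $C_{s}\subseteq Y$ for $s\in 2^{<\omega}$, with $\tau$-diameters shrinking to $0$ and $C_{s\concat 0}\cup C_{s\concat 1}\subseteq C_{s}$, together with partial $\tau$-homeomorphisms that, for all $s,t\in 2^{n}$, record a canonical identification of $C_{s}$ with $C_{t}$; arrange these so that for $x\in C_{s}$ and the matching $y\in C_{t}$ one has $x\mathrel{E}y$ iff $s$ and $t$ agree outside a single coordinate (the $\EZero$ pattern on $2^{n}$). The decisive use of $Y\cap Z=\emptyset$ is at the splitting step: inside any nonempty $\Sigma^{1}_{1}$ set $C\subseteq Y$ one can find two disjoint nonempty $\Sigma^{1}_{1}$ subsets that are isomorphic for the purpose of continuing the scheme but lie in distinct $E$-classes, since otherwise $C$ would carry a $\Delta^{1}_{1}$ transversal and hence meet $Z$. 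Using Gandy--Harrington forcing --- where two mutually $\Sigma^{1}_{1}$-generic reals turn out to be $E$-related exactly when their defining branches of the scheme are eventually equal --- the map sending $x$ to the unique point of $\bigcap_{n}C_{x\upharpoonright n}$ is a continuous injection $2^{\omega}\to 2^{\omega}$ reducing $\EZero$ to $E$. \emph{The main obstacle is exactly this second case:} proving the flip/perturbation lemma that yields $E$-separated yet scheme-compatible halves, setting up the bookkeeping so that the pattern of $E$-relations among $\{C_{s}\mid s\in 2^{n}\}$ is \emph{precisely} that of $\EZero$ (so the limit map is a reduction, not merely an $E$-homomorphism), and verifying the effective claims about $Z$; the strong Choquet property and the Gandy basis theorem are what make the scheme converge to honest points and the limit map continuous.

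Finally, the ``successor'' clause. Since $\EZero$ has $2^{\aleph_{0}}$ classes, $\Delta(2^{\omega})\leq_{B}\EZero$, while $\EZero\not\leq_{B}\Delta(2^{\omega})$ by Fact~\ref{fac:E_0 is not definable smooth}; hence $\Delta(2^{\omega})<_{B}\EZero$. If $E$ is any Borel equivalence relation with $\Delta(2^{\omega})\leq_{B}E\leq_{B}\EZero$, then by the dichotomy either $E$ is smooth, so $E\leq_{B}\Delta(2^{\omega})$ and thus $E\sim_{B}\Delta(2^{\omega})$, or $\EZero\sqsubseteq_{c}E\leq_{B}\EZero$, so $E\sim_{B}\EZero$. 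Therefore no Borel equivalence relation lies strictly between $\Delta(2^{\omega})$ and $\EZero$, i.e., $\EZero$ is the successor of $\Delta(2^{\omega})$.
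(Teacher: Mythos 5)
The paper offers no proof of this statement: it is imported verbatim from \cite{harringtonKechrisLouveau} and used downstream as a black box (for instance in the corollary on compact group actions). What you have written is a sketch of the original Gandy--Harrington argument of Harrington, Kechris and Louveau, which is precisely what that citation points to, so you are not taking a different route from the paper --- you are filling in its reference. Your preliminaries (relativization to a lightface $\Delta^1_1$ relation, the Gandy--Harrington topology and its strong Choquet property, the low set $\{x : \omega_1^x = \omega_1^{\mathrm{CK}}\}$ on which that topology becomes Polish) and your derivation of the ``successor'' clause from the dichotomy together with Fact \ref{fac:E_0 is not definable smooth} are correct.

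The imprecision is concentrated exactly where you flagged ``the main obstacle,'' and as written the sketch is incomplete there. The standard argument does not isolate a $\Delta^1_1$ invariant $Z$ characterized by the absence of $\Delta^1_1$ transversals off $Z$; instead one introduces the coarser $\Pi^1_1$ relation $R$ defined by $x \mathrel{R} y$ iff no $\Delta^1_1$ $E$-invariant set separates $x$ from $y$. If $R = E$, a reflection-plus-boundedness argument produces a countable separating family of $\Delta^1_1$ $E$-invariant sets and hence a Borel reduction of $E$ to $\Delta(2^\omega)$; if $R \neq E$, one works inside the nonempty $\Sigma^1_1$ set $\{(x,y) : x \mathrel{R} y \wedge \neg\, x \mathrel{E} y\}$ and runs the Cantor scheme there. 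The splitting lemma is then a Gandy--Harrington category argument --- if every mutually generic pair in a condition were $E$-related, that condition could not sit inside the $R$-but-not-$E$ set --- rather than the implication ``no split, hence a $\Delta^1_1$ transversal'' you assert, which does not obviously hold and is not how the lemma is proved. Since those are the steps where the theorem actually lives, you should either replace your $Z$/transversal formulation with the $R$-relation formulation and prove the reflection step and the splitting lemma, or simply cite \cite{harringtonKechrisLouveau} as the paper does.
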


\subsection{The ideal embedding theorem}

Suppose that $X$ is a topological space. Associated with each family
$\calU$ of open subsets of $X$ is the corresponding family $\ideal{\calU}$
of subsets of $X$ given by $F\in\ideal{\calU}\iff\forall U\in\calU\exists\mbox{ open }V\supseteq F\ V\union U\in\calU$.

Equivalently,
\begin{rem}
\label{rem:equivalent definition} $F\in\ideal{\calU}\iff\forall U\in\calU\exists V\in\calU\ F\union U\subseteq V$. \end{rem}
\begin{prop}
\label{proposition:ideal} Suppose that $X$ is a topological space
and $\calU$ is a family of open subsets of $X$. Then $\ideal{\calU}$
is an ideal. \end{prop}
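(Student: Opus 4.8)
The plan is to verify directly from the definition the two closure properties required of an ideal: closure downward under $\subseteq$, and closure under finite unions. (The remaining condition, $\emptyset \in \ideal{\calU}$, is immediate, since for every $U \in \calU$ the empty set is itself an open superset of $\emptyset$ with $\emptyset \cup U = U \in \calU$.) At each step I would use either the original definition of $\ideal{\calU}$ or the reformulation in Remark~\ref{rem:equivalent definition}, whichever reads more cleanly.

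For downward closure, suppose $F \in \ideal{\calU}$ and $F' \subseteq F$. Given $U \in \calU$, choose an open $V \supseteq F$ with $V \cup U \in \calU$ witnessing $F \in \ideal{\calU}$; since $F' \subseteq F \subseteq V$, this same $V$ witnesses $F' \in \ideal{\calU}$. This is routine.

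The substantive point is closure under binary unions, from which closure under all finite unions follows by an obvious induction. Suppose $F_0, F_1 \in \ideal{\calU}$ and fix $U \in \calU$. The key idea is to feed in the two hypotheses one after the other rather than in parallel: first apply $F_0 \in \ideal{\calU}$ to $U$ to get an open $V_0 \supseteq F_0$ with $V_0 \cup U \in \calU$, and then apply $F_1 \in \ideal{\calU}$ not to $U$ but to the \emph{enlarged} member $V_0 \cup U$ of $\calU$, obtaining an open $V_1 \supseteq F_1$ with $V_1 \cup V_0 \cup U \in \calU$. Then $V := V_0 \cup V_1$ is open, contains $F_0 \cup F_1$, and $V \cup U \in \calU$, so $F_0 \cup F_1 \in \ideal{\calU}$. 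The one thing to be careful about is exactly this bootstrapping through $V_0 \cup U$: applying both hypotheses to $U$ separately yields open $V_0, V_1$ with each $V_i \cup U \in \calU$ but gives no handle on $V_0 \cup V_1 \cup U$, so the two-step order is essential. I do not anticipate any genuine difficulty beyond getting this ordering right.
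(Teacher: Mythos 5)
Your proof is correct and follows essentially the same route as the paper: downward closure via reuse of the same witness $V$, and closure under binary unions via exactly the sequential bootstrapping you describe, applying the defining property of $F_1$ to the enlarged set $V_0 \cup U \in \calU$ rather than to $U$ itself.
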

\begin{proof}
To see that $\ideal{\calU}$ is downward closed, note that if $F\in\ideal{\calU}$
and $F'\subseteq F$, then for each $U\in\calU$, there exists an
open set $V\supseteq F\supseteq F'$ with the property that $V\union U\in\calU$,
thus $F'\in\ideal{\calU}$.

To see that $\ideal{\calU}$ is closed under finite unions, note that
if $F,F'\in\ideal{\calU}$, then for each $U\in\calU$, there exists
an open set $V\supseteq F$ with $V\union U\in\calU$, so there exists
an open set $V'\supseteq F'$ with $\left(V\union V'\right)\union U\in\calU$,
thus $F\union F'\in\ideal{\calU}$.\end{proof}
\begin{prop}
\label{proposition:splitting} Suppose that $X$ is a topological
space, $\Gamma$ is a group of homeomorphisms of $X$, $Y\subseteq X$
is $\Gamma$-invariant, and $\calU$ is the family of open sets $U\subseteq X$
for which there is no finite set $\Delta\subseteq\Gamma$ with $Y\subseteq\Delta\cdot U$.
Then for all $\ideal{\calU}$-positive sets $F\subseteq X$ and all
open sets $W\supseteq F$, there is a finite set $\Delta\subseteq\Gamma$
such that whenever $I$ is a finite set, $\sequence{F_{i}}{i\in I}$
is a finite sequence of subsets of $X$ whose union contains $F$,
and $\sequence{\lambda_{i}}{i\in I}$ is a sequence of elements of
$\Gamma$, there exists $\delta\in\Delta$ and $i\in I$ for which
$\closure{\delta\cdot W}\intersection\lambda_{i}\cdot F_{i}$ is $\ideal{\calU}$-positive. \end{prop}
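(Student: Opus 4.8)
The plan is to read $\Delta$ off the positivity of $F$: pick an open set $U\in\calU$ witnessing $F\notin\ideal\calU$, feed $V=W$ into that witnessing property to obtain a finite $\Delta$ with $Y\subseteq\Delta\cdot(W\cup U)$, and then exploit the fact that the open set $O:=X\setminus\bigcup_{\delta\in\Delta}\closure{\delta\cdot W}$ meets $Y$ only inside $\Delta\cdot U$.

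First I would record that $\ideal\calU$ is $\Gamma$-invariant. Since $Y$ is $\Gamma$-invariant, for every $\gamma\in\Gamma$ and $S\subseteq X$ there is a finite $\Delta\subseteq\Gamma$ with $Y\subseteq\Delta\cdot S$ iff there is one with $Y\subseteq\Delta\cdot(\gamma\cdot S)$ (conjugate $\Delta$ by $\gamma$); hence $\gamma$ carries $\calU$ into $\calU$, and then, given $F\in\ideal\calU$, $\gamma\in\Gamma$, $U\in\calU$, choosing an open $V\supseteq F$ with $V\cup\gamma^{-1}\cdot U\in\calU$ makes $\gamma\cdot V$ an open superset of $\gamma\cdot F$ with $(\gamma\cdot V)\cup U=\gamma\cdot(V\cup\gamma^{-1}\cdot U)\in\calU$, so $\gamma\cdot F\in\ideal\calU$. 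I would also use freely that a set of the form $\bigcup_j\Delta_j\cdot U$ (finitely many finite translations of a single set $U$), together with any further finite translation of it, is again $\Lambda\cdot U$ for some finite $\Lambda\subseteq\Gamma$; so if $U\in\calU$ then no such set contains $Y$.

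Now fix an $\ideal\calU$-positive $F$ and an open $W\supseteq F$. By definition of $\ideal\calU$, fix $U\in\calU$ with $V\cup U\notin\calU$ for every open $V\supseteq F$; since each $V\cup U$ is open, this says some finite $\Delta_V$ has $Y\subseteq\Delta_V\cdot(V\cup U)$. Taking $V=W$, fix a finite $\Delta\subseteq\Gamma$ with $Y\subseteq\Delta\cdot(W\cup U)$, and put $O:=X\setminus\bigcup_{\delta\in\Delta}\closure{\delta\cdot W}$; as $\delta\cdot W\subseteq\closure{\delta\cdot W}$, this yields the key containment
\[
 Y\cap O\subseteq\Delta\cdot U.\qquad(\ast)
\]
I claim this $\Delta$ works. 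If not, fix a finite $I$, sets $\langle F_i\mid i\in I\rangle$ with $\bigcup_i F_i\supseteq F$, and $\langle\lambda_i\mid i\in I\rangle$ in $\Gamma$ with $\closure{\delta\cdot W}\cap\lambda_i\cdot F_i\in\ideal\calU$ for all $\delta\in\Delta$, $i\in I$. For each $i$, since $\ideal\calU$ is an ideal, $\lambda_i\cdot F_i\cap\bigcup_{\delta\in\Delta}\closure{\delta\cdot W}\in\ideal\calU$; applying $\lambda_i^{-1}$ (using $\Gamma$-invariance of $\ideal\calU$ and $\lambda_i^{-1}\cdot\bigcup_{\delta\in\Delta}\closure{\delta\cdot W}=X\setminus\lambda_i^{-1}\cdot O$) gives $F_i\setminus\lambda_i^{-1}\cdot O\in\ideal\calU$. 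Hence, with $N:=\bigcup_{i\in I}(F_i\setminus\lambda_i^{-1}\cdot O)\in\ideal\calU$, we have $F\subseteq\bigl(\bigcup_{i\in I}\lambda_i^{-1}\cdot O\bigr)\cup N$. The heart of the matter is that $V_0:=\bigl(\bigcup_{i\in I}\lambda_i^{-1}\cdot O\bigr)\cup U$ belongs to $\calU$: it is open, and if $Y\subseteq\Delta'\cdot V_0$ for a finite $\Delta'$, then for each $i\in I$ and $\delta'\in\Delta'$, $\Gamma$-invariance of $Y$ and $(\ast)$ give $Y\cap\delta'\lambda_i^{-1}\cdot O=\delta'\lambda_i^{-1}\cdot(Y\cap O)\subseteq\delta'\lambda_i^{-1}\cdot(\Delta\cdot U)$; taking the union over $i,\delta'$ and adjoining $\Delta'\cdot U$ gives $Y\subseteq\Lambda\cdot U$ with $\Lambda:=\{\delta'\lambda_i^{-1}\delta\mid i\in I,\ \delta'\in\Delta',\ \delta\in\Delta\}\cup\Delta'$ finite, contradicting $U\in\calU$. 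Finally, apply the definition of $\ideal\calU$ to $N\in\ideal\calU$ with the set $V_0\in\calU$: fix an open $V_N\supseteq N$ with $V_N\cup V_0\in\calU$, and set $V:=V_N\cup\bigcup_{i\in I}\lambda_i^{-1}\cdot O$; then $V$ is open, $V\supseteq N\cup\bigcup_i\lambda_i^{-1}\cdot O\supseteq F$, and $V\cup U=V_N\cup V_0\in\calU$, contradicting the choice of $U$. This contradiction completes the proof, with $\Delta$ as constructed.

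I expect the step needing the most care to be the verification that $V_0\in\calU$: a union of a member of $\calU$ with an a priori arbitrary set can easily fail to lie in $\calU$, and it is exactly the containment $(\ast)$ — a consequence of how $\Delta$ was chosen — that lets the smallness of $U$ survive through all the translates $\lambda_i^{-1}\cdot O$. The remaining ingredients (the $\Gamma$-invariance of $\ideal\calU$ and the bookkeeping with finite sets of group elements) are routine.
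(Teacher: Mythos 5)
Your proof is correct and takes essentially the same route as the paper's: you pick the same $U\in\calU$ witnessing $\ideal\calU$-positivity, obtain $\Delta$ from $Y\subseteq\Delta\cdot(W\cup U)$, exploit $Y\setminus\closure{\Delta\cdot W}\subseteq\Delta\cdot U$, and derive a contradiction by manufacturing an open superset $V$ of $F$ with $V\cup U\in\calU$. The only difference is bookkeeping: the paper stays on the $\lambda_i$-translated side and invokes Remark~\ref{rem:equivalent definition} once to produce a single $V\in\calU$ containing $\bigcup_i\lambda_i\cdot(U\cup F_i)$, then pulls back, whereas you pull back to each $F_i$ first (which forces you to also verify $\Gamma$-invariance of $\ideal\calU$, a small extra step the paper does not need).
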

\begin{proof}
We will use Remark \ref{rem:equivalent definition}. Fix $U\in\calU$
such that for no $V\in\calU$ is $F\union U\subseteq V$. Then there
is a finite set $\Delta\subseteq\Gamma$ with $Y\subseteq\Delta\cdot(U\union W)$.
In light of Proposition \ref{proposition:ideal}, it is sufficient
to show that there is no finite set $I$, sequence $\sequence{F_{i}}{i\in I}$
of subsets of $X$ whose union contains $F$, and sequence $\sequence{\lambda_{i}}{i\in I}$
such that $\closure{\Delta\cdot W}\intersection\union[i\in I][\lambda_{i}\cdot F_{i}]\in\ideal{\calU}$.

Suppose, towards a contradiction, that there is such a triple. Then
$Y\setminus\closure{\Delta\cdot W}\subseteq\Delta\cdot U$, so $\setcomplement{\closure{\Delta\cdot W}}\union\union[i\in I][\lambda_{i}\cdot U]\in\calU$.
Fix $V\in\calU$ with $\union[i\in I][\lambda_{i}\cdot(U\union F_{i})]\subseteq(\closure{\Delta\cdot W}\intersection\union[i\in I][\lambda_{i}\cdot F_{i}])\union(\setcomplement{\closure{\Delta\cdot W}}\union\union[i\in I][\lambda_{i}\cdot U])\subseteq V$.
Then $\union[i\in I][\inverse{\lambda_{i}}\cdot V]\in\calU$ and $F\union U\subseteq\union[i\in I][\inverse{\lambda_{i}}\cdot V]$,
a contradiction.\end{proof}
\begin{thm}
\label{thm:embedding1}Suppose that $X$ is a complete metric space,
$\Gamma$ is a group of homeomorphisms of $X$, $Y\subseteq X$ is
$\Gamma$-invariant, $\calU$ is the family of open sets $U\subseteq X$
for which there is no finite set $\Delta\subseteq\Gamma$ such that
$Y\subseteq\Delta\cdot U$, $\sequence{R_{n}}{n\in\N}$ is an increasing
sequence of reflexive symmetric closed subsets of $X\times X$, and
there is a compact $\ideal{\calU}$-positive set $K\subseteq X$ with
the following properties:  \renewcommand{\theenumi}{\alph{enumi}} 
\begin{enumerate}
\item $\forall n\in\N\forall x\in K\exists\gamma\in\Gamma\ \neg x\mathrel{\relationpower{R_{n}}4}\gamma\cdot x$. 
\item $\forall\gamma\in\Gamma\exists n\in\N\forall x\in\Gamma\cdot K\ x\mathrel{R_{n}}\gamma\cdot x$. 
\end{enumerate}
Then for some $x\in K$ there is a continuous injective homomorphism
$\phi\from\Cantorspace\to\closure{\Gamma\cdot x}$ from $\pair{\Ezero}{\setcomplement{\Ezero}}$
into $\pair{\orbitequivalencerelation{\Gamma}X}{\setcomplement{\union[n\in\N][R_{n}]}}$. \end{thm}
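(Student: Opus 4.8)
The plan is to build the continuous injective homomorphism by the standard Cantor-scheme (fusion) construction adapted to the ideal $\ideal{\calU}$, using Proposition~\ref{proposition:splitting} as the engine that produces the required splitting at each stage. Concretely, I would recursively construct, for each $s \in 2^{<\omega}$ of length $n$, a nonempty open set $U_s \subseteq X$ together with a group element $\gamma_s \in \Gamma$, so that: the closures $\closure{U_s}$ are decreasing along extensions, have diameter tending to $0$, each $\closure{U_s}$ is $\ideal{\calU}$-positive (and meets $K$, or sits inside a fixed shrinking neighbourhood of $K$), the two immediate successors $U_{s0}, U_{s1}$ are sent to a common place by a translate—i.e.\ we arrange $\closure{\gamma_{s0}^{-1}\cdot U_{s0}}$ and $\closure{\gamma_{s1}^{-1}\cdot U_{s1}}$ to lie in a common previously-chosen open set—and, crucially, a ``flipping'' requirement tying together the coordinates: for $s,t$ of the same length $n$ differing in coordinate $k<n$, the relevant translates of $U_s$ and $U_t$ are $R_n$-far apart in the sense dictated by hypothesis~(a). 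Completeness of $X$ then gives, for each $\eta \in \Cantorspace$, a unique point $\phi(\eta) \in \bigcap_n \closure{U_{\eta\restriction n}}$, and this $\phi$ is continuous and injective by the diameter condition.

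The first real step is to initialize: pick the compact $\ideal{\calU}$-positive $K$ and an open $W \supseteq K$ of small diameter; apply Proposition~\ref{proposition:splitting} to $F = K$ and this $W$ to obtain the finite set $\Delta \subseteq \Gamma$. This $\Delta$ is what lets us split: at a node $s$ we will have an $\ideal{\calU}$-positive $F_s \subseteq \closure{\delta_s \cdot W}$ for some $\delta_s \in \Delta$ (equivalently $\delta_s^{-1}\cdot F_s$ is $\ideal{\calU}$-positive inside $\closure{W}$, hence—after shrinking—close to $K$), so that re-applying the proposition to the translated copy of $K$ produces, via the sequence $\langle F_i \rangle$ and translates $\langle \lambda_i \rangle$, two new $\ideal{\calU}$-positive pieces whose $\Gamma$-translates land back near $K$; we take these as (shrinkings of) $U_{s0},U_{s1}$. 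The genericity built into hypothesis~(a), namely that for every $n$ and every $x \in K$ there is $\gamma$ with $\neg\, x \mathrel{\relationpower{R_n}{4}} \gamma\cdot x$, is used to separate the two children in the $R_n$-metric: because $R_n$ is closed, the failure of $R_n^{(4)}$ at a point persists on a neighbourhood, and the factor $4$ (rather than $1$) gives the triangle-inequality room needed so that, after all the later shrinkings and translations accumulate, distinct branches $\eta \neq \nu$ of $\Cantorspace$ still have $\neg\, \phi(\eta) \mathrel{R_n} \phi(\nu)$ for the appropriate $n$; thus $(\phi(\eta),\phi(\nu)) \notin \bigcup_n R_n$ whenever $\eta \notin \Ezero$-classes force a coordinate to flip cofinally. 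In the other direction, if $\eta \mathrel{\Ezero} \nu$, they agree past some $n$, and the finitely many coordinate flips before $n$ are realized by a single $\gamma \in \Gamma$ (a product of the finitely many $\gamma_s$'s involved), so $\phi(\eta)$ and $\phi(\nu)$ lie in one $\Gamma$-orbit; that $\phi$ maps $\Cantorspace$ into $\closure{\Gamma\cdot x}$ for a fixed $x = \phi(0^\omega)$ follows because every $U_s$ was, by construction, a translate of a shrinking piece of the orbit of the base point.

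The main obstacle, and the place where hypothesis~(b) enters, is verifying the \emph{positive} part of the homomorphism—that $\eta \mathrel{\Ezero}\nu$ genuinely implies $(\phi(\eta),\phi(\nu)) \in \bigcup_n R_n$—uniformly. After one reduces to a single $\gamma\in\Gamma$ relating $\phi(\eta)$ and $\phi(\nu)$, one needs $\phi(\eta) \mathrel{R_n} \gamma\cdot\phi(\eta)$ for some $n$ depending only on $\gamma$; this is exactly what (b) supplies, since $\phi(\eta) \in \closure{\Gamma\cdot K} \subseteq \Gamma\cdot K$-closure and $R_n$ is closed. The delicate bookkeeping is to ensure the translates $\lambda_i$, $\delta_s$ produced by iterating Proposition~\ref{proposition:splitting} at deeper and deeper levels compose correctly, so that the ``value'' of each branch is a coherent point and the flip-group-element at level $n$ is well-defined; keeping the open sets inside a fixed compact neighbourhood of $K$ (using compactness of $K$ and of the finite $\Delta$) is what prevents the construction from drifting out of the region where (a) and (b) apply. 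I expect the separation argument using the exponent $4$ in~(a) to require the most care: one must track how much ``slack'' in $R_n$ is consumed by (i) passing from $K$ to a translate, (ii) the nested shrinkings, and (iii) the final closure, and check $4$ copies of $R_n$ absorb all of it.
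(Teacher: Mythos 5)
Your overall skeleton matches the paper's: a Cantor-scheme/fusion construction driven by Proposition~\ref{proposition:splitting}, with hypothesis~(a) providing the separation, the diameter condition and completeness giving convergence, and the exponent $4$ in $\relationpower{R_n}{4}$ absorbing translation slack by a triangle-inequality argument. The paper organizes this with a single chain $V_0\supseteq\closure{V_1}\supseteq\cdots$ in the family $\calV$ of open sets containing compact $\ideal{\calU}$-positive subsets of $K$, together with elements $\gamma_n\in\Gamma$, and sets $\gamma_s=\prod_{i<n}\gamma_i^{s(i)}$ so that the sets $\gamma_s\cdot V_n$ play the role of your $U_s$; this is just a bookkeeping reorganization of your scheme.

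However, you misidentify where hypothesis~(b) is used, and this matters. You assert that (b) is needed for the ``positive part'' of the homomorphism and, moreover, state that positive part as $\eta\mathrel{\Ezero}\nu\Rightarrow(\phi(\eta),\phi(\nu))\in\bigcup_n R_n$. That is not what the theorem asks for: the positive direction is into $\orbitequivalencerelation{\Gamma}{X}$, and it falls out essentially for free from the product structure of the $\gamma_s$, since $\phi(\constantsequence 0k\concatenation c)=\gamma_s^{-1}\cdot\phi(s\concatenation c)$ for $s\in\functions{k}{2}$. Hypothesis~(b) is instead invoked \emph{inside} the recursion: at stage $n$ one chooses $m\ge n$ so that $x\mathrel{R_m}\gamma\cdot x$ for all $x\in\Gamma\cdot K$ and all $\gamma$ among the finitely many $\gamma_s$ ($s\in\functions n2$) and $\Delta^{-1}$. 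It is precisely this that upgrades (a)'s pointwise failure of $\relationpower{R_m}{4}$ on $K$ to the translated failure of $R_m$ that the splitting proposition needs, namely that for no $\delta\in\Delta$, $s,t\in\functions n2$ is $\gamma_s\cdot x\mathrel{R_m}\gamma_t\inverse{\delta}\lambda\cdot x$. If you only use (b) post hoc as you propose, the inductive step will not go through: without controlling $\gamma_s,\Delta^{-1}$ against $R_m$ at the moment you apply Proposition~\ref{proposition:splitting}, you cannot conclude that the translated compact pieces are $R_m$-separated, and the whole separation mechanism collapses. So the gap is not cosmetic; the recursion as you have sketched it does not yet close.
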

\begin{proof}
Let $\calV$ denote the family of open sets $V\subseteq X$ containing
compact $\ideal{\calU}$-positive subsets of $K$. We recursively
construct $V_{n}\in\calV$ and $\gamma_{n}\in\Gamma$, from which
we define $\gamma_{s}=\prod_{i<n}\gamma_{i}^{s(i)}$ for $s\in\functions{<\omega}2$,
so as to ensure that at stage $n$ of the construction, the following
conditions are satisfied: 
\begin{enumerate}
\item $\forall m<n\ \closure{V_{m+1}}\union\gamma_{m}\cdot\closure{V_{m+1}}\subseteq V_{m}$. 
\item $\forall m<n\forall s\in\functions{m+1}2\ \diameter{\gamma_{s}\cdot V_{m+1}}\le1/m$. 
\item $\forall m<n\forall s,t\in\functions m2\ \left(\left(\gamma_{s}\cdot V_{m+1}\right)\times\left(\gamma_{t}\gamma_{m}\cdot V_{m+1}\right)\right)\intersection R_{m}=\emptyset$. 
\end{enumerate}
We begin by setting $V_{0}=X$.

Suppose now that $n\in\N$ and we have found $V_{n}$ and $\sequence{\gamma_{i}}{i<n}$.
Fix an $\ideal{\calU}$-positive compact set $L\subseteq K$ contained
in $V_{n}$, as well as an open set $W\supseteq L$ for which $\closure W\subseteq V_{n}$.
By Proposition \ref{proposition:splitting}, there is a finite set
$\Delta\subseteq\Gamma$ such that whenever $I$ is a finite set,
$\sequence{L_{i}}{i\in I}$ is a sequence of compact sets whose union
is $L$, and $\sequence{\lambda_{i}}{i\in I}$ is a sequence of elements
of $\Gamma$, there are $\delta\in\Delta$ and $i\in I$ for which
$\closure{\delta\cdot W}\intersection\lambda_{i}\cdot L_{i}$ is $\ideal{\calU}$-positive.
Condition (b) yields $m\ge n$ such that $\forall x\in\Gamma\cdot K\forall\gamma\in\set{\gamma_{s}}{s\in\functions n2}\union\Delta^{-1}\ x\mathrel{R_{m}}\gamma\cdot x$.
In particular, it follows that:
\begin{itemize}
\item [(*)]If $x\in K$, $\lambda\in\Gamma$ and $\neg x\mathrel{\relationpower{R_{m}}4}\lambda\cdot x$,
then for no $\delta\in\Delta$ and $s,t\in\functions n2$ is it the
case that $\gamma_{s}\cdot x\mathrel{R_{m}}\gamma_{t}\inverse{\delta}\lambda\cdot x$.
\end{itemize}
Thus condition (a) yields a finite set $I$, a sequence $\sequence{L_{i}}{i\in I}$
of compact subsets of $X$ whose union is $L$, and a sequence $\sequence{\lambda_{i}}{i\in I}$
of elements of $\Gamma$ with 
\[
\forall\delta\in\Delta\forall i\in I\forall s,t\in\functions n2\ (\gamma_{s}\cdot L_{i}\times\gamma_{t}\inverse{\delta}\lambda_{i}\cdot L_{i})\intersection R_{m}=\emptyset.
\]
Fix $\delta\in\Delta$ and $i\in I$ such that $\closure{\inverse{\lambda_{i}}\delta\cdot W}\intersection L_{i}$
is $\ideal{\calU}$-positive, and define $\gamma_{n}=\inverse{\delta}\lambda_{i}$.
Then $\forall s,t\in\functions n2\ ((\gamma_{s}\cdot L_{i})\times(\gamma_{t}\gamma_{n}\cdot L_{i}))\intersection R_{m}=\emptyset$.
Proposition \ref{proposition:ideal} ensures that by replacing $L_{i}$
with a compact $\ideal{\calU}$-positive subset of $\closure{\inverse{\lambda_{i}}\delta\cdot W}\intersection L_{i}$,
we can assume that $\forall s\in\functions{n+1}2\ \diameter{\gamma_{s}\cdot L_{i}}<1/n$.
It follows that there is an open set $V_{n+1}\subseteq X$ containing
$L_{i}$ such that $\closure{V_{n+1}}\union(\gamma_{n}\cdot\closure{V_{n+1}})\subseteq V_{n}$,
$\forall s\in\functions{n+1}2\ \diameter{\gamma_{s}\cdot V_{n+1}}\le1/n$,
and $\forall s,t\in\functions n2\ ((\gamma_{s}\cdot V_{n+1})\times(\gamma_{t}\gamma_{n}\cdot V_{n+1}))\intersection R_{n}=\emptyset$.
This completes the recursive construction.

Conditions (1) and (2) ensure that we obtain a continuous function
$\phi\from\Cantorspace\to X$ by insisting that $\left\{ \phi\left(c\right)\right\} =\intersection[n\in\N][\gamma_{\restriction cn}\cdot V_{n}]$
for all $c\in\Cantorspace$. To see that $\phi$ is a homomorphism
from $\Ezero$ to $\orbitequivalencerelation{\Gamma}X$, suppose that
$c\in\Cantorspace$, $k\in\N$, and $s\in\functions k2$, and observe
that 
\[
\left\{ \gamma_{s}\cdot\phi(\constantsequence 0k\concatenation c)\right\} ={\textstyle \intersection[n\in\N][\gamma_{s}\gamma_{\restriction{(\constantsequence 0k\concatenation c)}n}\cdot V_{n}]=\left\{ \phi(s\concatenation c)\right\} .}
\]
To see that $\phi$ is an injective homomorphism from $\setcomplement{\Ezero}$
to $\setcomplement{\union[n\in\N][R_{n}]}$, suppose that $c,d\in\Cantorspace$,
$n\in\N$, $c(n)=0$, and $d(n)=1$, and observe that $\phi(c)\in\gamma_{\restriction cn}\cdot V_{n+1}$
and $\phi(d)\in\gamma_{\restriction dn}\gamma_{n}\cdot V_{n+1}$,
in which case condition (3) ensures that $\neg\phi(c)\mathrel{R_{n}}\phi(d)$.
Finally, set $x=\phi(\constantsequence 0{\infty})$ and note that
$\image{\phi}{\Cantorspace}\subseteq\closure{\Gamma\cdot x}$ and
$x\in K$. 
\end{proof}
We give a slight variant of Theorem \ref{thm:embedding1}, adding
an extra assumption. 
\begin{thm}
\label{thm:embedding2}Suppose that $X$, $\Gamma$, $Y$, $\calU$,
$\sequence{R_{n}}{n\in\N}$ are as in Theorem \ref{thm:embedding1}.
Suppose that there is a compact $\ideal{\calU}$-positive set $K\subseteq X$
with the following properties:  \renewcommand{\theenumi}{\alph{enumi}} 
\begin{enumerate}
\item $\forall n\in\N\forall x\in K\exists\gamma\in\Gamma\ \neg x\mathrel{\relationpower{R_{n}}2}\gamma\cdot x$. 
\item $\forall\gamma\in\Gamma\exists n\in\N\forall x\in K\ x\mathrel{R_{n}}\gamma\cdot x$.
Note that this condition is weaker than (b) in Theorem \ref{thm:embedding1}.
\item $\forall\gamma\in\Gamma\forall x,y\in\Gamma\cdot K\forall n\in\Nn\ x\mathrela{R_{n}}y\Rightarrow\gamma\cdot x\mathrela{R_{n}}\gamma\cdot y$.
\end{enumerate}

Then for some $x\in K$ there is a continuous injective homomorphism
$\phi\from\Cantorspace\to\closure{\Gamma\cdot x}$ from $\pair{\Ezero}{\setcomplement{\Ezero}}$
into $\pair{\orbitequivalencerelation{\Gamma}X}{\setcomplement{\union[n\in\N][R_{n}]}}$. 

\end{thm}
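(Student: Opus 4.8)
The plan is to reduce Theorem \ref{thm:embedding2} to Theorem \ref{thm:embedding1}, rather than to re-run the recursive construction from scratch. The obvious approach is to replace the sequence $\sequence{R_{n}}{n\in\N}$ by a coarser one for which the hypotheses of Theorem \ref{thm:embedding1} become available. Concretely, I would try $S_{n} = \relationpower{R_{n}}{2}$, i.e. $x \mathrel{S_n} y$ iff there is $z$ with $x \mathrel{R_n} z \mathrel{R_n} y$. Each $S_n$ is reflexive and symmetric since $R_n$ is; it is closed because $R_n$ is closed and $K$-relevant sets are compact (more precisely, one restricts attention to the compact set $\closure{\Gamma\cdot x}$ eventually produced, but one can also observe that the relevant diagonal intersections are taken inside compact pieces, so closedness of the composition is not an issue in the construction); and the sequence is increasing because $\sequence{R_{n}}{n\in\N}$ is. One then has to check that $\setcomplement{\union[n\in\N][S_n]} = \setcomplement{\union[n\in\N][R_n]}$, which holds as sets of pairs because $R_n \subseteq S_n \subseteq R_{2n} \subseteq \union[k][R_k]$, so the union is unchanged, hence a homomorphism into $\setcomplement{\union S_n}$ is exactly a homomorphism into $\setcomplement{\union R_n}$, as required in the conclusion.

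With this substitution, hypothesis (a) of Theorem \ref{thm:embedding2}, namely $\forall n\forall x\in K\exists\gamma\ \neg x \mathrel{\relationpower{R_n}{2}} \gamma\cdot x$, becomes precisely $\forall n\forall x\in K\exists\gamma\ \neg x \mathrel{S_n} \gamma\cdot x$. But Theorem \ref{thm:embedding1}(a) demands $\neg x\mathrel{\relationpower{R_n}{4}}\gamma\cdot x$ with the \emph{same} $R_n$; in terms of $S$ it asks for $\neg x \mathrel{\relationpower{S_n}{2}} \gamma\cdot x$, which is \emph{stronger} than what we are given. So a naive substitution does not immediately work, and this mismatch — the factor-of-two slack between (a) in the two statements — is exactly the main obstacle. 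The way I would get around it is to also shrink the group: hypothesis (c), that every $\gamma\in\Gamma$ acts by an $R_n$-automorphism on $\Gamma\cdot K$, is what lets us push the asymmetry around. Given $x \mathrel{\relationpower{S_n}{2}} \gamma\cdot x$, write out the chain $x \mathrel{R_n} z_1 \mathrel{R_n} z_2 \mathrel{R_n} z_3 \mathrel{R_n} \gamma\cdot x$; apply a suitable group element (and use (c) to transport $R_n$-relations) to realize the left half of the chain as a $\Gamma$-translate of the right half, forcing a genuine $\relationpower{R_n}{2}$-relation of the form $x \mathrel{\relationpower{R_n}{2}} \gamma'\cdot x$ for some $\gamma'$, contradicting (a). In other words, (c) upgrades "$\relationpower{R_n}{4}$-close to a translate" to "$\relationpower{R_n}{2}$-close to a translate", which is the content of (a) of Theorem \ref{thm:embedding2}.

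The remaining hypothesis to verify is (b) of Theorem \ref{thm:embedding1}: $\forall\gamma\exists n\forall x\in\Gamma\cdot K\ x \mathrel{R_n} \gamma\cdot x$, whereas (b) of Theorem \ref{thm:embedding2} only gives this for $x\in K$. Here again (c) does the work: fix $\gamma$, get $n$ with $x\mathrel{R_n}\gamma\cdot x$ for all $x\in K$; an arbitrary point of $\Gamma\cdot K$ is $\lambda\cdot x$ for some $x\in K$, $\lambda\in\Gamma$, and one wants $\lambda\cdot x \mathrel{R_{n'}} \gamma\lambda\cdot x$. Writing $\gamma\lambda\cdot x = \lambda\cdot(\inverse{\lambda}\gamma\lambda\cdot x)$ and applying (c) to the element $\lambda$, it suffices to know $x \mathrel{R_{n'}} \inverse{\lambda}\gamma\lambda\cdot x$, which by the $K$-version of (b) applied to the single element $\inverse{\lambda}\gamma\lambda$ holds for a uniform $n'$ — but only for that fixed conjugate, and $\lambda$ ranges over all of $\Gamma$, so this does \emph{not} give a single $n$ working for all of $\Gamma\cdot K$. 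So I expect one cannot prove Theorem \ref{thm:embedding2} by naively invoking Theorem \ref{thm:embedding1} with the same $\Gamma$; instead, I would restate the recursive construction, observing that at each stage only finitely many group elements $\gamma_s$ ($s\in\functions{n}{2}$) and $\delta\in\Delta$ are in play, and that (b)-for-$K$ together with (c) suffices to find the required stabilization level $m$ for each of these finitely many elements individually (applying (c) with the relevant $\gamma_s$ to move the action from $\Gamma\cdot K$ back to $K$, where (b) applies). Thus the real proof is: copy the proof of Theorem \ref{thm:embedding1} verbatim, but in the one place where hypothesis (b) was used — to choose $m$ from the finite set $\set{\gamma_s}{s\in\functions{n}{2}}\union\Delta^{-1}$ — replace that appeal by the combination of (b)-for-$K$ and (c), and in the place where (a) was used replace $\relationpower{R_m}{4}$ by $\relationpower{R_m}{2}$, noting that condition (3) of the construction, which only compares points of the form $\gamma_s\cdot V$ against $\gamma_t\gamma_m\cdot V$, still yields $\neg\phi(c)\mathrel{R_n}\phi(d)$ from the weaker separation. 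The bookkeeping for "(c) lets finitely many translates be handled by the $K$-hypothesis" is the one genuinely new ingredient; everything else is a transcription.
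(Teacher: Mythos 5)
Your final paragraph lands on the paper's actual strategy — re-run the recursive construction from Theorem \ref{thm:embedding1}, modifying the choice of $m$ so that (b)-for-$K$ together with (c) does the work of (b)-for-$\Gamma\cdot K$, and weaken the $\relationpower{R_m}{4}$ separation to a $\relationpower{R_m}{2}$ separation. That is exactly what the paper does. However, the step you identify as ``the one genuinely new ingredient'' is precisely the step you do not carry out, and the two concrete attempts you make at it are not correct.

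First, the ``chain argument'' in your middle paragraph is unsound. You want to deduce, from $x \mathrel{\relationpower{R_n}{4}} \gamma\cdot x$ and (c), that $x \mathrel{\relationpower{R_n}{2}} \gamma'\cdot x$ for some $\gamma'\in\Gamma$. But (c) only transports $R_n$-relations between points of $\Gamma\cdot K$; the intermediate witnesses $z_1,z_2,z_3$ of the $\relationpower{R_n}{4}$-chain are arbitrary points of $X$ with no reason to lie in $\Gamma\cdot K$, let alone to be $\Gamma$-translates of $x$. There is no ``suitable group element'' to apply, and no sense in which (c) ``upgrades $\relationpower{R_n}{4}$-close to a translate to $\relationpower{R_n}{2}$-close to a translate.'' Second, even setting that aside, your final summary proposes to apply (b)-for-$K$ to the \emph{same} finite set $\set{\gamma_s}{s\in\functions n2}\union\Delta^{-1}$ as in the proof of Theorem \ref{thm:embedding1} and then patch things up with (c). This does not produce the needed separation. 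The paper instead applies the weaker (b) to the finite set of \emph{products} $\set{\delta\gamma_1^{-1}\gamma_2}{\gamma_1,\gamma_2\in\set{\gamma_s}{s\in\functions n2},\ \delta\in\Delta}$, i.e. it chooses $m$ with $\forall x\in K\ \forall\gamma_1,\gamma_2\ \forall\delta\in\Delta\ x\mathrel{R_m}\delta\gamma_1^{-1}\gamma_2\cdot x$. One then derives the replacement for ($*$): if $x\in K$ and $\gamma_s\cdot x\mathrel{R_m}\gamma_t\inverse{\delta}\lambda\cdot x$, apply (c) with $\delta\gamma_t^{-1}$ (all points are in $\Gamma\cdot K$) to get $\delta\gamma_t^{-1}\gamma_s\cdot x\mathrel{R_m}\lambda\cdot x$, and combine with $x\mathrel{R_m}\delta\gamma_t^{-1}\gamma_s\cdot x$ (the choice of $m$) to conclude $x\mathrel{\relationpower{R_m}{2}}\lambda\cdot x$; so if $\neg x\mathrel{\relationpower{R_m}{2}}\lambda\cdot x$ then no such $\delta,s,t$ exist. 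This is the statement that replaces ($*$), and the rest of the construction proceeds verbatim. Without identifying the correct finite set to feed into (b)-for-$K$, your argument does not close.
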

\begin{proof}
The proof is parallel to the proof of Theorem \ref{thm:embedding1},
reading the same up to ({*}), but we choose $m$ so that $\forall x\in K\forall\gamma_{1},\gamma_{2}\in\set{\gamma_{s}}{s\in\functions n2}\forall\delta\in\Delta\ x\mathrel{R_{m}}\delta\gamma_{1}^{-1}\gamma_{2}\cdot x$.
By (c), we get:
\begin{itemize}
\item [(**)]If $x\in K$, $\lambda\in\Gamma$ and $\neg x\mathrel{\relationpower{R_{m}}2}\lambda\cdot x$,
then for no $\delta\in\Delta$ and $s,t\in\functions n2$ is it the
case that $\gamma_{s}\cdot x\mathrel{R_{m}}\gamma_{t}\inverse{\delta}\lambda\cdot x$.
\end{itemize}
The rest of the proof is exactly the same. 
\end{proof}

\subsection{Choquet spaces}

The proof of Theorem \ref{thm:embedding1} easily goes through in
the context of strong Choquet spaces. 
\begin{defn}
The \emph{Choquet game} on a topological space $X$ is a two player
game in $\omega$-many rounds. In round $n$, player A chooses a nonempty
open set $U_{n}\subseteq V_{n-1}$ (where $V_{-1}=X$), and player
B responds by choosing a nonempty open subset $V_{n}\subseteq U_{n}$.
Player B wins if the intersection $\bigcap\set{V_{n}}{n<\omega}$
is not empty. 

The \emph{strong Choquet game }is similar: in round $n$ player A
chooses an open set $U_{n}\subseteq V_{n-1}$ and $x_{n}\in U_{n}$,
and player B responds by choosing an open set $V_{n}\subseteq U_{n}$
containing $x_{n}$. Again, player B wins when the intersection $\bigcap\set{V_{n}}{n<\omega}$
is not empty. 

A topological space $X$ is a \emph{(strong) Choquet space} if player
B has a winning strategy in every (strong) Choquet game. 
\end{defn}
It is easy to see that:
\begin{example}
Every Polish space is strong Choquet. 
\end{example}
But for our purposes, we shall need the following example:
\begin{example}
If $X$ is compact (not necessarily Hausdorff) and has a basis consisting
of clopen sets then it is strong Choquet.\end{example}
\begin{proof}
In round $n$, player B will choose a clopen set $x_{n}\in V_{n}\subseteq U_{n}$.
By compactness, the intersection $\bigcap\set{V_{n}}{n<\omega}$ is
not empty. \end{proof}
\begin{fact}
\label{fac:G_delta subset is Choquet} (see e.g., \cite{BorelCard})
If $X$ is strong Choquet and $\emptyset\neq U\subseteq X$ is $G_{\delta}$,
then $U$ is also strong Choquet.\end{fact}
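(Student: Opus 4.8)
The plan is to play the strong Choquet game on $U$ and use a winning strategy for player B on the ambient space $X$, taking some care to keep player B's moves inside $U$. Write $U = \bigcap_{n\in\N} W_n$ with each $W_n$ open in $X$; we may assume $W_0 = X$ and that the sequence is decreasing. Fix a winning strategy $\sigma$ for player B in the strong Choquet game on $X$.

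First I would describe the simulation. Suppose player A plays a run of the strong Choquet game on $U$, producing open sets $U_n' \subseteq U$ and points $x_n \in U_n'$ (with $U_n' \subseteq V_{n-1}'$, the previous move of player B). We translate this into a run on $X$ as follows: at stage $n$, let player A (in the simulated game on $X$) play the open set $U_n := U_n' \cap W_n$, which is open in $X$ since $U_n'$ is open in $U$ hence open in $X$ (as $U$ itself is $G_\delta$, but more simply because $U_n'$ open in $U$ and $U$ open-in-itself means $U_n' = O \cap U$ for some $O$ open in $X$, and $U_n' \subseteq U \subseteq W_n$ already, so in fact $U_n = U_n'$; the intersection with $W_n$ is harmless and only serves to make the argument transparent), together with the point $x_n \in U_n$. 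One must check $U_n \subseteq V_{n-1}$, the previous response of $\sigma$, which holds because we will have arranged $V_{n-1} \supseteq V_{n-1}' \supseteq U_n'$ — this is maintained inductively. Player B's response via $\sigma$ is an open set $V_n \subseteq U_n$ with $x_n \in V_n$; we then have player B (in the real game on $U$) play $V_n' := V_n \cap U$, which is open in $U$, contains $x_n$, and is contained in $U_n' = U_n$.

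Next I would verify the win condition. Since $\sigma$ is winning, $\bigcap_{n} V_n \neq \emptyset$; pick $y$ in this intersection. For each $n$, $y \in V_n \subseteq U_n \subseteq W_n$, so $y \in \bigcap_n W_n = U$. Hence $y \in V_n \cap U = V_n'$ for all $n$, so $\bigcap_n V_n' \neq \emptyset$ and player B wins the game on $U$. Therefore the described strategy is winning for player B, and $U$ is strong Choquet.

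The only genuinely delicate point is the bookkeeping that keeps the simulated game on $X$ a legal run — namely that each $U_n$ is a legal move against $\sigma$'s previous response $V_{n-1}$ and that $U_n$ really is open \emph{in $X$} (not merely in $U$). The first is handled by the inductive invariant $U_n' \subseteq V_{n-1}' \subseteq V_{n-1}$; the second is immediate once one notes that an open subset of the open-in-itself space $U$, when $U$ is $G_\delta$ in $X$, need not be open in $X$ in general — so the correct move is to intersect with $W_n$ as above, or, cleaner still, to observe that we only ever need $U_n$ open in $X$ and here $U_n' \subseteq U = \bigcap W_k$, so $U_n' = \bigcap_k (U_n' \cap W_k)$, and one feeds $\sigma$ the honestly-$X$-open sets by shrinking appropriately at each finite stage. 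I expect no other obstacle; the rest is the routine translation above.
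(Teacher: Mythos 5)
Your overall strategy is the standard one and will work, but as written there is a genuine gap at the very step you flag as delicate: you never actually produce a move for the simulated game on $X$ that is open in $X$. You write ``$U_n := U_n' \cap W_n$, which is open in $X$ since $U_n'$ is open in $U$ hence open in $X$'' --- that inference is false (an open subset of a $G_\delta$ set is generally not open in the ambient space), and the parenthetical repair only establishes $U_n = U_n'$, which is no better. The closing paragraph acknowledges the issue but the suggested fixes do not land: intersecting $U_n'$ with $W_n$ leaves it equal to $U_n'$ (since $U_n' \subseteq U \subseteq W_n$), and the identity $U_n' = \bigcap_k (U_n' \cap W_k)$ does not hand $\sigma$ an honestly $X$-open set at any finite stage; each $U_n' \cap W_k$ is still only open in $U$.

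The missing ingredient is to introduce, at each stage $n$, an open $O_n \subseteq X$ with $U_n' = O_n \cap U$ (this exists because $U_n'$ is open in the subspace $U$), and to feed the simulated player A the genuinely $X$-open set $U_n := O_n \cap W_n \cap V_{n-1}$ together with $x_n$. This is legal because $x_n \in U_n' \subseteq O_n$, $x_n \in U \subseteq W_n$, and inductively $x_n \in V_{n-1}' \subseteq V_{n-1}$. Then $\sigma$ returns an open $V_n \subseteq U_n$ with $x_n \in V_n$, and in the real game you answer $V_n' := V_n \cap U$, which is open in $U$, contains $x_n$, and satisfies $V_n' \subseteq O_n \cap U = U_n'$. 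Your verification of the win condition then goes through unchanged: any $y \in \bigcap_n V_n$ lies in every $W_n$, hence in $U$, hence in every $V_n'$. So the idea and bookkeeping are right; the one thing you must add is the extension $O_n$, and the one thing you must delete is the false claim that $U_n'$ is open in $X$.
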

\begin{thm}
\label{thm:embedding Choquet1} Suppose that $X$ is a regular strong
Choquet space, $\Gamma$ is a group of homeomorphisms of $X$, $Y\subseteq X$
is $\Gamma$-invariant, $\calU$ is the family of open sets $U\subseteq X$
for which there is no finite set $\Delta\subseteq\Gamma$ such that
$Y\subseteq\Delta\cdot U$, $\sequence{R_{n}}{n\in\N}$ is an increasing
sequence of reflexive symmetric closed subsets of $X\times X$, and
there is a compact $\ideal{\calU}$-positive set $K\subseteq X$ with
the following properties:  \renewcommand{\theenumi}{\alph{enumi}} 
\begin{enumerate}
\item $\forall n\in\N\forall x\in K\exists\gamma\in\Gamma\ \neg x\mathrel{\relationpower{R_{n}}4}\gamma\cdot x$. 
\item $\forall\gamma\in\Gamma\exists n\in\N\forall x\in\Gamma\cdot K\ x\mathrel{R_{n}}\gamma\cdot x$. 
\end{enumerate}
Then there is a map $\phi:2^{\omega}\to\SS\left(X\right)$ such that
for every $y,z\in2^{\omega}$: 
\begin{itemize}
\item $\phi\left(y\right)$ is a nonempty closed $G_{\delta}$ subset of
$X$.
\item If $z\mathrela{\EZero}y$ then there is some $\gamma\in\Gamma$ such
that $\gamma\cdot\phi\left(z\right)=\phi\left(y\right)$.
\item If $\twiddle z\mathrela{\EZero}y$ then $\left(\mbox{\ensuremath{\phi}}\left(y\right)\times\mbox{\ensuremath{\phi}}\left(z\right)\right)\subseteq\setcomplement{\union[n\in\N][R_{n}]}$.
\end{itemize}
In particular, there is homomorphism $\phi\from\Cantorspace\to X$
from $\pair{\Ezero}{\setcomplement{\Ezero}}$ into $\pair{\orbitequivalencerelation{\Gamma}X}{\setcomplement{\union[n\in\N][R_{n}]}}$. 

Moreover, if $X$ is compact, then we can choose $\phi$ so that its
range is $\closure{\Gamma\cdot x}$ for some $x\in K$.\end{thm}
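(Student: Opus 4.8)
The plan is to re-run the proof of Theorem~\ref{thm:embedding1} essentially unchanged, keeping the whole $\ideal{\calU}$-positivity apparatus --- Propositions~\ref{proposition:ideal} and~\ref{proposition:splitting}, and the recursive extraction of $\gamma_{n}$ and of an $\ideal{\calU}$-positive compact set from conditions (a) and (b) --- verbatim, and replacing only the single appeal to completeness of the metric. In Theorem~\ref{thm:embedding1} completeness is used solely through the shrinking-diameter clause (2), whose purpose is to make each $\intersection[n\in\N][\gamma_{\restriction cn}\cdot V_{n}]$ a singleton; here I would instead fix, once and for all, a winning strategy $\sigma$ for player B in the strong Choquet game on $X$ (available since $X$ is strong Choquet) and use it only to keep these intersections nonempty. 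Concretely, alongside $V_{n}$ and $\gamma_{n}$ I would build, for every $s\in\functions{<\omega}2$, a partial play $q_{s}$ of the strong Choquet game consistent with $\sigma$, of length $\cardinal s$, with $q_{s\concatenation i}$ extending $q_{s}$ by exactly one round; writing $W_{s}=\gamma_{s}\cdot V_{\cardinal s}$ and letting $G_{s}$ denote the last move of player B in $q_{s}$, the invariant to maintain is the \emph{sandwich} $W_{s\concatenation i}\subseteq G_{s}\subseteq W_{s}$. Then for $y\in\Cantorspace$ the plays $\sequence{q_{\restriction yn}}{n\in\N}$ have union an infinite play consistent with $\sigma$, so $\intersection[n\in\N][G_{\restriction yn}]\neq\emptyset$, and the sandwich forces $\intersection[n\in\N][\gamma_{\restriction yn}\cdot V_{n}]=\intersection[n\in\N][W_{\restriction yn}]=\intersection[n\in\N][G_{\restriction yn}]$; keeping clause (1) of the construction gives $\closure{W_{s\concatenation i}}\subseteq W_{s}$, so this common value $\phi(y)$ is a nonempty closed $G_{\delta}$ subset of $X$.

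The hard part will be reconciling the sandwich with the demand --- essential for the recursion to continue --- that $V_{n+1}$ still contain a compact $\ideal{\calU}$-positive subset of $K$: a priori player B, following $\sigma$, could respond with minuscule sets $G_{s}$, forcing $V_{n+1}$ into a small neighbourhood of a point, which need not be $\ideal{\calU}$-positive. This is exactly where the ``strong'' in strong Choquet is needed. The device is that every compact $\ideal{\calU}$-positive set $L$ has a \emph{$\ideal{\calU}$-condensation point}, i.e.\ a point $p\in L$ with $V\intersection L$ $\ideal{\calU}$-positive for every open $V\ni p$: otherwise cover $L$ by neighbourhoods with $V\intersection L\in\ideal{\calU}$, take a finite subcover, and conclude $L\in\ideal{\calU}$ from Proposition~\ref{proposition:ideal}. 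At stage $n$ I would carry out the original steps to obtain $\gamma_{n}$ and an $\ideal{\calU}$-positive compact $L'\subseteq K\intersection V_{n}$, choosing the auxiliary open set around the stage-$n$ compact set small enough (by regularity, using the invariant) that $\gamma_{t}\gamma_{n}\cdot L'\subseteq G_{t}$ for all $t\in\functions n2$ (the inclusions $\gamma_{t}\cdot L'\subseteq G_{t}$ being automatic from the invariant); then pick a $\ideal{\calU}$-condensation point $q'$ of $L'$, choose (by regularity) a small open $V_{n+1}\supseteq L'$ respecting clauses (1) and (3) and the sandwich, and have player A play in each $q_{t}$ the set $W_{t\concatenation i}=\gamma_{t}\gamma_{n}^{i}\cdot V_{n+1}$ with the named point $\gamma_{t}\gamma_{n}^{i}\cdot q'$. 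Player B must then answer with $G_{t\concatenation i}\ni\gamma_{t}\gamma_{n}^{i}\cdot q'$, so that every $\inverse{\gamma_{s}}\cdot G_{s}$ ($s\in\functions{n+1}2$) is a neighbourhood of $q'$; hence $\intersection[s\in\functions{n+1}2][\inverse{\gamma_{s}}\cdot G_{s}]\intersection L'$ is $\ideal{\calU}$-positive, and its closure --- after one further regularity shrink keeping it inside the relevant open set --- is an $\ideal{\calU}$-positive compact subset of $K$ lying in $V_{n+1}$, which restores the invariant and lets the recursion proceed. Throughout, regularity of $X$ plays the role the metric played in trapping a compact set inside an open set.

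Granting this construction, the three displayed conclusions follow as in Theorem~\ref{thm:embedding1}. The identity $\gamma_{s}\cdot\phi(\constantsequence 0{\cardinal s}\concatenation c)=\phi(s\concatenation c)$, immediate from $\gamma_{\restriction{(s\concatenation c)}n}=\gamma_{s}\,\gamma_{\restriction{(\constantsequence 0{\cardinal s}\concatenation c)}n}$, gives the second bullet: if $z\mathrela{\EZero}y$, pick $k$ with $\restriction y{[k,\infty)}=\restriction z{[k,\infty)}$ and get $\phi(y)=\gamma_{\restriction yk}\inverse{\gamma_{\restriction zk}}\cdot\phi(z)$. The third bullet is clause (3): given $\neg(z\mathrela{\EZero}y)$ and $n\in\N$, choose $j\ge n$ with $y(j)\neq z(j)$, say $y(j)=0$ and $z(j)=1$; then $\phi(y)\subseteq\gamma_{\restriction yj}\cdot V_{j+1}$ and $\phi(z)\subseteq\gamma_{\restriction zj}\gamma_{j}\cdot V_{j+1}$, so $(\phi(y)\x\phi(z))\intersection R_{j}=\emptyset$, and $R_{n}\subseteq R_{j}$ together with symmetry of $R_{n}$ finishes. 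For the ``in particular'' clause I would fix a transversal of $\EZero$, choose a point of $\phi(r)$ for each representative $r$, and transport it along the $\gamma$'s exactly as in the second bullet; this yields a map $\Cantorspace\to X$ which, by the second and third bullets, is a homomorphism from $\pair{\EZero}{\setcomplement{\EZero}}$ into $\pair{\orbitequivalencerelation{\Gamma}X}{\setcomplement{\union[n\in\N][R_{n}]}}$.

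Finally, when $X$ is compact the strategy $\sigma$ is not needed at all, since a decreasing sequence of nonempty closed sets in a compact space has nonempty intersection; keeping the $\ideal{\calU}$-positive compact sets chosen at successive stages nested, their intersection produces a point $x\in K$ with $x\in\intersection[n\in\N][V_{n}]$, hence $\gamma_{\restriction yn}\cdot x\in W_{\restriction yn}$ for all $n$. Since $\phi(y)=\intersection[n\in\N][\closure{W_{\restriction yn}}]$ and the $\closure{W_{\restriction yn}}$ are nested compacta, every open neighbourhood of $\phi(y)$ contains all but finitely many $\gamma_{\restriction yn}\cdot x$, so (as $X$ is compact Hausdorff) the $\Gamma$-invariant compact set $\closure{\Gamma\cdot x}$ meets $\phi(y)$ for every $y$; running the transversal argument inside $\closure{\Gamma\cdot x}$ then gives the desired homomorphism with range contained in $\closure{\Gamma\cdot x}$.
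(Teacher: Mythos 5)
Your proposal is correct and takes essentially the same approach as the paper: both fix a winning strategy for Player~B, carry along a family of finite partial plays of the strong Choquet game indexed by binary strings, and let regularity take over the role that the metric played in trapping compact sets inside open sets, with the ``moreover'' clause handled by nested compacta. Your ``$\ideal{\calU}$-condensation point'' lemma is a tidy repackaging of the paper's Claim, which reaches the same conclusion by playing from every $x\in L$, intersecting Player~B's responses, shrinking by regularity, and then invoking compactness of $L$ together with finite additivity of $\ideal{\calU}$ (Proposition~\ref{proposition:ideal}) to locate the one $x$ that keeps things $\ideal{\calU}$-positive.
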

\begin{proof}
Fix a winning strategy for Player B in the strong Choquet game. The
main point is that in the construction done in the proof of Theorem
\ref{thm:embedding1}, instead of decreasing the diameter of the open
sets, we choose them according to the strategy. So in addition to
choosing $V_{n}$ and $\gamma_{n}$, we also choose points $x_{n}\in X$
and open sets $U_{n}\in\calV$ (the family of open sets containing
compact $\ideal{\calU}$-positive subsets of $K$) such that $x_{n}\in U_{n}\subseteq V_{n}$,
and the new construction will satisfy:
\begin{enumerate}
\item $\forall m<n\ \closure{U_{m}}\union\gamma_{m}\cdot\closure{U_{m}}\subseteq V_{m}$. 
\item $\forall m<n\forall s,t\in\functions m2\ \left(\left(\gamma_{s}\cdot U_{m}\right)\times\left(\gamma_{t}\gamma_{m}\cdot U_{m}\right)\right)\intersection R_{m}=\emptyset$. 
\item $\forall m<n$ and $\forall s\in\functions{m+1}2$, $\gamma_{s}\cdot V_{m+1}$
is contained in an open set which is played according to Player B's
strategy in the Strong Choquet game in which Player A plays $\sequence{\left(\gamma_{s\upharpoonright i+1}\cdot U_{i},\gamma_{s\upharpoonright i+1}\cdot x_{i}\right)}{i\leq m}$
and Player B plays according to his strategy.
\end{enumerate}
For the construction, we follow the proof of Theorem \ref{thm:embedding1},
and note that:
\begin{claim*}
Suppose $L$ is an $I_{\Uu}$-positive compact set contained in some
open set $U$, and suppose $\Delta$ is a finite subset of $\Gamma$.
Furthermore, suppose that for any $\gamma\in\Delta$, $\gamma\cdot U$
is contained in an open set which is chosen by Player B in some finite
strong Choquet play according to his strategy. Then, there is some
$x\in L$ such that if Player A plays $\left(\gamma\cdot U,\gamma\cdot x\right)$
then there is a set $V\in\calV$ contained in $U$ such that $\gamma\cdot V$
is contained in Player B's response for all $\gamma\in\Delta$. \end{claim*}
\begin{proof}
Indeed, for each point $x\in L$, let Player A play $\left(\gamma\cdot U,\gamma\cdot x\right)$,
and let $\gamma\cdot U_{\gamma,x}$ be Player B's response. Let $U_{x}=\bigcap_{\gamma\in\Delta}U_{x,\gamma}$,
and let $U'_{x}$ be such that $x\in U_{x}'$ and $\closure{U_{x}'}\subseteq U{}_{x}$.
By compactness and by Proposition \ref{proposition:ideal}, for some
$x\in L$, $\closure{U'_{x}}\cap L$ is $I_{\Uu}$-positive. Let $V=U_{x}$. 
\end{proof}
Now we let $U_{n}$ be the set denoted $V_{n+1}$ in the proof of
Theorem \ref{thm:embedding1} (without the condition on the diameter),
and proceed using the claim. Finally, we let $\phi\left(c\right)=\intersection[n\in\N][\gamma_{\restriction cn}\cdot V_{n}]$.

For the moreover part, choose any $x\in K\cap\phi\left(\constantsequence 0{\omega}\right)$,
and note that by compactness $\closure{\Gamma\cdot x}\cap\phi\left(c\right)\neq\emptyset$. 
\end{proof}
We also have an analog to Theorem \ref{thm:embedding2}, which we
state briefly. 
\begin{thm}
\label{thm:embedding Choquet2}Suppose that $X$, $\Gamma$, $Y$,
$\calU$, $\sequence{R_{n}}{n\in\N}$ are as in Theorem \ref{thm:embedding Choquet1}.
Suppose that there is a compact $\ideal{\calU}$-positive set $K\subseteq X$
satisfying the assumptions of Theorem \ref{thm:embedding2}.

Then the conclusion of Theorem \ref{thm:embedding Choquet1} hold. \end{thm}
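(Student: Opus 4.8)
The plan is to reduce Theorem \ref{thm:embedding Choquet2} to the work already done in the proof of Theorem \ref{thm:embedding Choquet1}, exactly as Theorem \ref{thm:embedding2} was reduced to Theorem \ref{thm:embedding1}. Recall that the only place where hypothesis (a) of Theorem \ref{thm:embedding Choquet1} (the ``$R_n^{(4)}$'' condition) and hypothesis (b) are used is at the step where one passes from ({*}) to the choice of the finite set $I$, the compact pieces $\sequence{L_i}{i\in I}$, and the group elements $\sequence{\lambda_i}{i\in I}$ satisfying the disjointness relation $(\gamma_s\cdot L_i\times\gamma_t\inverse{\delta}\lambda_i\cdot L_i)\intersection R_m=\emptyset$. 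Everything before that point — the setup of $\calV$, the extraction of $\Delta$ from Proposition \ref{proposition:splitting}, and everything after — the verification that $\phi$ is a continuous injective homomorphism — is insensitive to which of the two hypothesis packages one assumes.

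Concretely, I would write: ``The proof is parallel to that of Theorem \ref{thm:embedding Choquet1}, reading identically up to ({*}).'' Then, in place of ({*}), invoke the argument of Theorem \ref{thm:embedding2}: using the weaker local hypotheses (a) and (b) of Theorem \ref{thm:embedding2} together with the invariance hypothesis (c), choose $m\ge n$ so large that $\forall x\in K\,\forall\gamma_1,\gamma_2\in\set{\gamma_s}{s\in\functions n2}\,\forall\delta\in\Delta\ x\mathrel{R_m}\delta\gamma_1^{-1}\gamma_2\cdot x$ — this is possible because $\set{\gamma_s}{s\in\functions n2}$ and $\Delta$ are finite and (b) of Theorem \ref{thm:embedding2} applies to each of the finitely many group elements $\delta\gamma_1^{-1}\gamma_2$. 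Then, arguing as in the proof of ({*}{*}): if $x\in K$, $\lambda\in\Gamma$, and $\neg x\mathrel{\relationpower{R_m}2}\lambda\cdot x$, then for no $\delta\in\Delta$ and $s,t\in\functions n2$ can $\gamma_s\cdot x\mathrel{R_m}\gamma_t\inverse{\delta}\lambda\cdot x$ hold — for otherwise applying $\gamma_s^{-1}$ (legitimate by (c), since all relevant points lie in $\Gamma\cdot K$) gives $x\mathrel{R_m}\gamma_s^{-1}\gamma_t\inverse{\delta}\lambda\cdot x$, and combining with $x\mathrel{R_m}(\gamma_s^{-1}\gamma_t\inverse{\delta})^{-1}\cdot(\gamma_s^{-1}\gamma_t\inverse{\delta}\lambda\cdot x) = x \mathrel{R_m} \lambda\cdot x$ being contradicted would require care — more simply, from $\gamma_s \cdot x \mathrel{R_m} \gamma_t \inverse\delta \lambda \cdot x$ and the choice of $m$ (applied with the point $\gamma_t\inverse\delta\lambda\cdot x\in\Gamma\cdot K$, giving $\gamma_t\inverse\delta\lambda\cdot x \mathrel{R_m} \gamma_s\gamma_t^{-1}\cdot\gamma_t\inverse\delta\lambda\cdot x$ after rearranging) one derives $x\mathrel{\relationpower{R_m}2}\lambda\cdot x$ using symmetry of $R_m$, a contradiction. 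Then hypothesis (a) of Theorem \ref{thm:embedding2} (with $\relationpower{R_m}2$) yields the finite set $I$, the compact sets $L_i$ with union $L$, and the $\lambda_i$ with $\forall\delta\in\Delta\,\forall i\in I\,\forall s,t\in\functions n2\ (\gamma_s\cdot L_i\times\gamma_t\inverse\delta\lambda_i\cdot L_i)\intersection R_m=\emptyset$, and from here the remainder of the proof of Theorem \ref{thm:embedding Choquet1} — including the application of the Claim to select the right Choquet-strategy-respecting open sets and the final definition $\phi(c)=\intersection[n\in\N][\gamma_{\restriction cn}\cdot V_n]$, plus the moreover clause — goes through verbatim.

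The one point requiring genuine (if brief) attention is the derivation of ({*}{*}) in the Choquet setting: one must check that hypothesis (c) of Theorem \ref{thm:embedding2} — closure of each $R_n$ under the diagonal $\Gamma$-action on $\Gamma\cdot K$ — together with the $m$ chosen from the \emph{weak} condition (b) genuinely recovers the same conclusion about $\gamma_s\cdot x$ and $\gamma_t\inverse\delta\lambda\cdot x$ that the strong condition (b) of Theorem \ref{thm:embedding Choquet1} gave for free; since $\gamma_s,\gamma_t\inverse\delta\lambda$ and their products land us only at points of $\Gamma\cdot K$, not of $K$ itself, (c) is exactly what licenses translating the $R_m$-relation back to a relation between $x$ and $\lambda\cdot x$. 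I expect this to be the main (small) obstacle — it is essentially the observation recorded in Theorem \ref{thm:embedding2} about why its condition (b) may be weaker, and the proof should simply cite that and write ``the rest of the proof is exactly the same.''
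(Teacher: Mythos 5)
Your plan is exactly the intended one (the paper gives no explicit proof, only the statement): the two hypotheses (a), (b) of Theorem~\ref{thm:embedding Choquet1} enter its proof only in the passage through $(\ast)$, and the $(\ast)\to(\ast\ast)$ substitution already worked out for Theorem~\ref{thm:embedding2} is orthogonal to everything Choquet-specific --- the Claim, the strategy bookkeeping, and the definition $\phi(c)=\bigcap_{n}\gamma_{\restriction cn}\cdot V_n$ all go through unchanged --- so one simply chooses $m$ with $x\mathrel{R_m}\delta\gamma_1^{-1}\gamma_2\cdot x$ for all $x\in K$, $\gamma_1,\gamma_2\in\{\gamma_s:s\in 2^n\}$, $\delta\in\Delta$, invokes (c) to get $(\ast\ast)$, and proceeds as before.

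One correction to your ``more simply'' re-derivation of $(\ast\ast)$: you apply the choice of $m$ at the point $\gamma_t\delta^{-1}\lambda\cdot x\in\Gamma\cdot K$, but that choice holds only for points of $K$, not of $\Gamma\cdot K$ --- this is precisely the content of condition (b) of Theorem~\ref{thm:embedding2} being weaker than that of Theorem~\ref{thm:embedding Choquet1}, so the step is illegitimate; and even granting it, the chain you describe lands at $x\mathrel{R_m^{(2)}}\delta^{-1}\lambda\cdot x$ rather than $x\mathrel{R_m^{(2)}}\lambda\cdot x$. The correct two-hop argument is: from $\gamma_s\cdot x\mathrel{R_m}\gamma_t\delta^{-1}\lambda\cdot x$ apply (c) with $\gamma=\delta\gamma_t^{-1}$ to get $\delta\gamma_t^{-1}\gamma_s\cdot x\mathrel{R_m}\lambda\cdot x$, and combine with $x\mathrel{R_m}\delta\gamma_t^{-1}\gamma_s\cdot x$ from the choice of $m$. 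Since you ultimately defer to citing Theorem~\ref{thm:embedding2}'s argument rather than relying on your inline computation, this does not affect the soundness of the proposal.
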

\begin{problem}
All the applications we found use a weak version of Theorem \ref{thm:embedding1},
i.e., we apply it with (a) replaced by: $\forall n\in\N\exists\gamma\in\Gamma\forall x\in K\ \neg x\mathrel{\relationpower{R_{n}}4}\gamma\cdot x$.
Is there an interesting application that uses the full strength of
the theorem?
\end{problem}

\section{\label{sec:Applications}Applications}

\subsection{Application to compact group actions}

Most of our applications will be model theoretic, but we start with
a simple topological one. 
\begin{cor}
Suppose that $G$ is a compact topological group and that $\cdot:G\x X\to X$
is a continuous action on a complete metric space $X$. Let $H$ a
subgroup of $G$, and suppose $H=\bigcup_{n<\omega}V_{n}$ where $V_{n}$
are closed subsets of $G$, $e\in V_{n}^{-1}=V_{n}$, $V_{n}^{2}\subseteq V_{n+1}$.
Then, if there is some $x\in X$ such that $H\cdot x\neq V_{n}\cdot x$
for all $n<\omega$, then $\EZero\sqsubseteq_{c}E_{H}^{X}$. If not
and $X$ is Polish then $E_{H}^{X}$ is smooth. \end{cor}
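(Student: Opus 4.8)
\emph{Proof proposal.} The plan is to obtain the first alternative from Theorem~\ref{thm:embedding1} and the second from Fact~\ref{fac:Closed-equivalence-relations}, using a common setup. Let $\Gamma$ be the image of $H$ under the canonical homomorphism from $G$ into the group of homeomorphisms of $X$, and for each $n$ put $R_{n}=\set{(x,y)\in X\times X}{y\in V_{n}\cdot x}$. Since $V_{n}$ is closed in the compact group $G$ it is compact, so $R_{n}$ is the image of $V_{n}\times X$ under the continuous map $(v,x)\mapsto(x,v\cdot x)$, which is a closed map because $V_{n}$ is compact; hence $R_{n}$ is closed. It is reflexive and symmetric because $e\in V_{n}=V_{n}^{-1}$, the chain $V_{n}=eV_{n}\subseteq V_{n}^{2}\subseteq V_{n+1}$ makes $\sequence{R_{n}}{n\in\N}$ increasing, and $\union[n\in\N][R_{n}]=\orbitequivalencerelation{\Gamma}{X}=E_{H}^{X}$ since $\union[n\in\N][V_{n}\cdot x]=H\cdot x$ for every $x$.

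For the first alternative, fix $x_{0}$ with $H\cdot x_{0}\neq V_{n}\cdot x_{0}$ for all $n$, and apply Theorem~\ref{thm:embedding1} with $Y=\Gamma\cdot x_{0}$ (a $\Gamma$-invariant set), $K=\{x_{0}\}$ (compact), and $\calU$ as in the theorem. The step I expect to be the real obstacle is checking that $\{x_{0}\}$ is $\ideal{\calU}$-positive. I would argue as follows: the closure $\overline{H}$ of $H$ in $G$ is a compact subgroup, so $\overline{H}\cdot x_{0}$ is compact and contains $H\cdot x_{0}$; and for any open $V\ni x_{0}$, writing $V'=\set{g\in G}{g\cdot x_{0}\in V}$ for the corresponding open neighbourhood of the identity, the standard fact that $\overline{H}\subseteq HW$ for every neighbourhood $W$ of the identity yields $\overline{H}\subseteq HV'$, hence $\overline{H}\cdot x_{0}\subseteq\union[h\in H][h\cdot V]$, and compactness of $\overline{H}\cdot x_{0}$ produces a finite $\Delta\subseteq\Gamma$ with $\Gamma\cdot x_{0}\subseteq\Delta\cdot V$. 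Thus $\emptyset\in\calU$ while no open set containing $x_{0}$ belongs to $\calU$, so $\{x_{0}\}\notin\ideal{\calU}$ (apply the definition of $\ideal{\calU}$ with $U=\emptyset$). Condition~(a) is then easy: $V_{n}^{4}\subseteq V_{n+2}$ gives $\relationpower{R_{n}}{4}\subseteq R_{n+2}$, so it suffices to produce $\gamma\in\Gamma$ with $\gamma\cdot x_{0}\notin V_{n+2}\cdot x_{0}$, which exists because $V_{n+2}\cdot x_{0}\subsetneq H\cdot x_{0}$. Condition~(b) is immediate: any $\gamma\in\Gamma$ comes from some $h\in H=\union[n\in\N][V_{n}]$, hence from some $h\in V_{n}$, so $x\mathrel{R_{n}}\gamma\cdot x$ for that $n$ and all $x$. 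Theorem~\ref{thm:embedding1} then yields a continuous injective homomorphism from $\pair{\EZero}{\setcomplement{\EZero}}$ into $\pair{\orbitequivalencerelation{\Gamma}{X}}{\setcomplement{\union[n\in\N][R_{n}]}}=\pair{E_{H}^{X}}{\setcomplement{E_{H}^{X}}}$, i.e.\ a continuous injective reduction of $\EZero$ to $E_{H}^{X}$, so $\EZero\sqsubseteq_{c}E_{H}^{X}$.

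For the second alternative, suppose that for every $x\in X$ there is an $n$ with $H\cdot x=V_{n}\cdot x$, and that $X$ is Polish. Then each orbit $H\cdot x$ equals the compact set $V_{n}\cdot x$, hence is closed, so $\overline{H}\cdot x\subseteq\overline{H\cdot x}=H\cdot x\subseteq\overline{H}\cdot x$ and therefore $H\cdot x=\overline{H}\cdot x$; consequently $E_{H}^{X}=E_{\overline{H}}^{X}$. The latter is closed, being the image of $\overline{H}\times X$ under $(g,x)\mapsto(x,g\cdot x)$ with $\overline{H}$ compact: if $(x_{i},g_{i}\cdot x_{i})$ is a net in this image converging in $X\times X$, then along a subnet $g_{i}\to g\in\overline{H}$, so by continuity of the action the limit equals $(x,g\cdot x)$, which lies in the image. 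By Fact~\ref{fac:Closed-equivalence-relations}, $E_{H}^{X}$ is smooth. As flagged above, the positivity of $\{x_{0}\}$ is the only non-routine point, and it is precisely where the compactness of $G$ is used (via compactness of the orbit closure $\overline{H}\cdot x_{0}$ together with the fact that a neighbourhood of the identity times a dense subgroup exhausts the group).
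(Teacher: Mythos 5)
Your proposal is correct. The setup for the first alternative is the same as the paper's ($\Gamma=H$, $Y=H\cdot x_{0}$, $R_{n}$ from $V_{n}$, $K=\{x_{0}\}$, with conditions (a) and (b) checked in the same way), but you prove $\ideal{\calU}$-positivity by a different and somewhat cleaner route: you use that $\closure{H}$ is a compact subgroup together with the standard fact $\closure{H}\subseteq HV'$ for any identity neighbourhood $V'$, and then take a finite subcover of $\closure{H}\cdot x_{0}$; the paper instead argues by contradiction, recursively producing $h_{n}\in H$ with $h_{n}\cdot x_{0}\notin\bigcup_{i<n}h_{i}\cdot V$ and then using compactness of the product $G^{\kappa}$ with $\kappa=|G|^{+}$ to manufacture an injection $\kappa\to G$. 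Both are valid; yours avoids the cardinality/product-space detour and isolates the role of compactness more transparently.

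For the second alternative your route is genuinely different from the paper's. The paper observes that each class is compact, hence $G_{\delta}$, and then invokes a general lemma (proved on the spot via the Harrington--Kechris--Louveau dichotomy and Baire category: dense $G_{\delta}$ $\EZero$-classes would have to meet) that a Borel equivalence relation on a Polish space with $G_{\delta}$ classes is smooth. You instead show directly that the hypothesis forces $H\cdot x=\closure{H}\cdot x$ for all $x$, so $E_{H}^{X}=E_{\closure{H}}^{X}$, and that the latter is a closed subset of $X\times X$ (using compactness of $\closure{H}$ and nets, correctly accounting for the fact that $G$ need not be metrizable); then Fact~\ref{fac:Closed-equivalence-relations} applies. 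Your argument is shorter and more elementary, while the paper's has the side benefit of recording the more general ``$G_{\delta}$ classes $\Rightarrow$ smooth'' observation. No gaps.
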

\begin{proof}
First assume that there is such an $x\in X$. Let $Y=H\cdot x$, $R_{n}=\set{\left(x,y\right)\in X^{2}}{\exists h\in V_{n}\left(y=h\cdot x\right)}$,
$\Gamma=H$ and $K=\left\{ x\right\} $. All the conditions of Theorem
\ref{thm:embedding1} but the condition that $K$ is $I_{\Uu}$-positive
hold trivially (note that $R_{n}$ is closed by the compactness of
$V_{n}$). To show that $K$ is $I_{\Uu}$-positive it is enough to
see that for any open $x\in V\subseteq X$, there is some finite $\Delta\subseteq H$
such that $\Delta\cdot V\supseteq Y$. Suppose not. Recursively choose
$h_{n}\in H$ for $n<\omega$ such that $h_{n}\cdot x\notin\bigcup_{i<n}h_{i}\cdot V$. 

Let $\kappa=\left|G\right|^{+}$, and let $L=G^{\kappa}$ equipped
with the product topology (so it is compact). For a finite $s\subseteq\kappa$,
let $F_{s}=\set{\eta\in L}{\forall i\in s\left(\eta\left(i\right)\cdot x\notin\bigcup_{j<i,j\in s}\eta\left(j\right)\cdot V\right)}$.
By the construction above, this is a closed nonempty set. By compactness,
there is some $\eta\in\bigcap_{s\subseteq\kappa,\left|s\right|<\omega}F_{s}$.
In particular $\eta:\kappa\to G$ is injective --- contradiction. 

Now assume that $X$ is Polish and that there is no such $x$ but
$E_{H}^{X}$ is not smooth. By assumption, for all $x\in X$, $H\cdot x=V_{n}\cdot x$
for some $n<\omega$ and as $G$ is compact it follows that all classes
are compact, so also $G_{\delta}$. But a Borel equivalence relation
$E$ with $G_{\delta}$ classes on a Polish space $X$ must be smooth.
Otherwise Fact \ref{fac:Harrington-Kechris-Louveau-dich} gives us
a continuous embedding of $\EZero$ into $E$, and it follows that
every $\EZero$ class is $G_{\delta}$. But $\EZero$ classes are
also dense --- contradiction. \end{proof}
\begin{cor}
If $G$ is a compact complete metric group, and $H$ is an $F_{\sigma}$
subgroup, then either $H$ is closed (in which case, if $G$ is Polish,
$E_{H}^{G}$ is smooth), or $\EZero\sqsubseteq_{c}E_{H}^{G}$. 
\end{cor}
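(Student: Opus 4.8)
The plan is to deduce this corollary from the preceding one, applied to the left translation action of $G$ on $X = G$ with $H$ itself as the acting subgroup. Since the topology of $G$ is induced by a complete metric and $G$ is compact, $X = G$ is a complete metric space on which the compact group $G$, and hence $H$, acts continuously; so the only hypothesis of the preceding corollary requiring any work is the production of an exhausting sequence of closed sets with the stated algebraic properties.

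To obtain such a sequence, write $H = \bigcup_{n} F_n$ with each $F_n$ closed, and replace $F_n$ by $\bigcup_{i \le n}(F_i \cup F_i^{-1}) \cup \{e\}$, so that the $F_n$ become increasing, symmetric, and contain $e$, while remaining closed with union $H$. Now set $V_0 = F_0$ and $V_{n+1} = V_n \cdot V_n \cup F_{n+1}$. By induction each $V_n$ is a closed subset of the compact group $G$, hence compact, so that $V_n \cdot V_n$ --- the image of the compact set $V_n \times V_n$ under the continuous multiplication map into the Hausdorff space $G$ --- is again closed; moreover $V_n \subseteq H$, $e \in V_n = V_n^{-1}$, $V_n \cdot V_n \subseteq V_{n+1}$, and $F_n \subseteq V_n$, so $\bigcup_n V_n = \bigcup_n F_n = H$. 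This is a normal form of the kind required by the preceding corollary.

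Applying that corollary, the key observation is that for the left translation action one has $H \cdot x = Hx$ and $V_n \cdot x = V_n x$ for every $x \in G$, so $H \cdot x = V_n \cdot x$ holds if and only if $H = V_n$ (cancel $x$ on the right). Consequently there are two cases. If $H \ne V_n$ for every $n$, then for any $x \in G$ we have $H \cdot x \ne V_n \cdot x$ for all $n$, and the first alternative of the preceding corollary yields $\EZero \sqsubseteq_c E_H^G$. If instead $H = V_n$ for some $n$, then $H$ is a closed subset of $G$; and if $G$ is Polish then $X = G$ is Polish, so the ``if not'' clause of the preceding corollary gives that $E_H^G$ is smooth. (Alternatively, in this last case $E_H^G$ is the preimage of the closed set $H$ under the continuous map $(g_1, g_2) \mapsto g_1 g_2^{-1}$, hence closed, hence smooth by Fact \ref{fac:Closed-equivalence-relations}.)

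No serious obstacle arises, since all of the substance is already packaged in the preceding corollary; the one place where a little care is needed is checking that each $V_n \cdot V_n$ is closed, which is exactly where the compactness of $G$ enters --- the same compactness being responsible for the closedness of the relations $R_n$ used in the preceding corollary.
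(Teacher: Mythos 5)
Your proof is correct and follows exactly the route the paper intends: instantiate the preceding corollary with $X=G$ under left translation, build a normal form $\sequence{V_n}{n<\omega}$ for the $F_\sigma$ subgroup $H$ (using compactness of $G$ to keep each $V_n\cdot V_n$, and hence each $V_n$, closed), and observe that by right-cancellation $H\cdot x=V_n\cdot x$ iff $H=V_n$, so the dichotomy of that corollary collapses to $H$ closed versus $\EZero\sqsubseteq_c E_H^G$. The paper states this corollary without a written proof, but your argument supplies the evident one.
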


\subsection{\label{sub:preliminaries}Applications to model theory}

In applying Theorem \ref{thm:embedding1} or any of its variations,
we need to find the space $X$, the set $Y$, the group $\Gamma$,
the closed sets $R_{n}$ and the compact $I_{\Uu}$-positive set $K$.
In all our applications, $X$ will be some subspace of $S_{\alpha}\left(M\right)$
for model $M$, invariant under $E$, $Y$ will be the projection
of some $E$-class $C$, $R_{n}$ will be the projections of $U_{n}$
(from $E$'s normal form), $\Gamma$ will be some group of homeomorphisms
of $X$, which is either induced by automorphisms of the model $M$
or by a type definable model theoretic group and $K$ will be the
projection of some type of the form $U_{n}\left(x,a\right)$. The
main point is to show that $K$ is $I_{\Uu}$-positive, which we will
call here ``proper''.

\subsubsection{Preliminaries}

We briefly introduce our notation, which is fully explained in \cite{BorelCard}.
\begin{itemize}
\item $T$ is a complete (perhaps many sorted) first order theory.
\item $\alpha$ is some ordinal.
\item $S_{\alpha}\left(A\right)$ is the Stone space of complete $\alpha$-types
over $A$, which comes equipped with a compact Hausdorff topology,
and $L_{\alpha}\left(A\right)$ is the set of formulas in the first
$\alpha$ variables. 
\item $\C$ is a monster model of $T$ --- a $\kappa$-saturated, $\kappa$-homogeneous
model where $\kappa$ is a big cardinal.
\item All parameter sets and models considered will be \emph{small} (i.e.,
of cardinality less than $\kappa$) subsets and elementary substructures
of $\C$.
\item $\equiv$ is equality of types, $\mathordi{\equiv_{L}^{\alpha}}$
is equality of Lascar strong types of $\alpha$-tuples (if $A$ is
a small set, then $\equiv_{A}$ denotes types equality over $A$,
etc.).
\item $\Aut\left(\C/A\right)$ is the group of automorphisms of $\C$ that
fix $A$ pointwise, and an $A$-invariant subset of $\C^{\alpha}$
is one invariant under the action of this group.
\item A subset $X\subseteq\C^{\alpha}$ is \emph{pseudo closed} if $X$
is type definable over some small set. A \emph{pseudo open} set is
a complement of a pseudo closed set. \emph{Pseudo $G_{\delta}$} sets
and \emph{pseudo $F_{\sigma}$} sets are defined in the obvious way. 
\item If $Y\subseteq\C^{\alpha}$ is some set, and $M$ some model then
$Y_{M}=\set{p\in S_{\alpha}\left(M\right)}{\exists a\in Y\left(p=\tp\left(a/M\right)\right)}$.
This is also denoted by $S_{M}\left(Y\right)$. 
\end{itemize}
We also recall the notion of an indiscernible sequence:
\begin{defn}
Let $A$ be a small set. Let $\left(I,<\right)$ be some linearly
ordered set. A sequence $\bar{a}=\sequence{a_{i}}{i\in I}\in\left(\C^{\alpha}\right)^{I}$
is called \emph{$A$-indiscernible} (or \emph{indiscernible over $A$})
if for all $n<\omega$, every increasing $n$-tuple from $\bar{a}$
realizes the same type over $A$. When $A$ is omitted, it is understood
that $A=\emptyset$. 
\end{defn}
Also recall:
\begin{fact}
\label{fac:indiscernibles exist}$ $
\begin{enumerate}
\item \cite[Lemma 5.1.3]{TentZiegler} Let $\left(I,<_{I}\right)$, $\left(J,<_{J}\right)$
be small linearly ordered sets, and let $A$ be some small set. Suppose
$\bar{b}=\sequence{b_{j}}{j\in J}$ is some sequence of elements from
$\C^{\alpha}$. Then there exists an indiscernible sequence $\bar{a}=\sequence{a_{i}}{i\in I}\in\left(\C^{\alpha}\right)^{I}$
such that:

\begin{itemize}
\item For any $n<\omega$ and $\varphi\in L_{\alpha\cdot n}$, if $\C\models\varphi\left(b_{j_{0}},\ldots,b_{j_{n-1}}\right)$
for every $j_{0}<_{J}\ldots<_{J}j_{n-1}$ from $J$ then $\C\models\varphi\left(a_{i_{0}},\ldots,a_{i_{n-1}}\right)$
for every $i_{0}<_{I}\ldots<_{I}i_{n-1}$ from $I$.
\end{itemize}
\item \cite[proof of Proposition 3.1.4]{AvivThesis} If $M$ is a small
model and $a\equiv_{M}b$, then there is an indiscernible sequence
$\bar{c}=\sequence{c_{i}}{i<\omega}$ such that both $a\concat\bar{c}$
and $b\concat\bar{c}$ are indiscernible. 
\end{enumerate}
\end{fact}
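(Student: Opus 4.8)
The plan is to prove the two parts independently; each is a staple of the model-theoretic toolkit, and the arguments do not interact.

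\emph{Part (1).} This is the standard extraction of indiscernibles via Ramsey's theorem and compactness. In the variables $\sequence{x_i}{i\in I}$ I would write the partial type $\Sigma$ over $A$ consisting of: (i) for every $n<\omega$, every $\varphi\in L_{\alpha\cdot n}(A)$, and all increasing tuples $i_0<_I\cdots<_I i_{n-1}$ and $i'_0<_I\cdots<_I i'_{n-1}$ from $I$, the biconditional $\varphi(x_{i_0},\dots,x_{i_{n-1}})\leftrightarrow\varphi(x_{i'_0},\dots,x_{i'_{n-1}})$; together with (ii) for every such $n$ and $\varphi$ with $\C\models\varphi(b_{j_0},\dots,b_{j_{n-1}})$ for all $j_0<_J\cdots<_J j_{n-1}$ in $J$, the formula $\varphi(x_{i_0},\dots,x_{i_{n-1}})$ for some (equivalently, by (i), every) increasing tuple from $I$. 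A realization of $\Sigma$ in $\C$ is exactly an $A$-indiscernible $\bar a=\sequence{a_i}{i\in I}$ with the asserted inheritance property, so it suffices to see that $\Sigma$ is finitely satisfiable. A finite fragment involves only finitely many variables, say indexed by an increasing $k$-tuple, and finitely many formulas, of arities bounded by some $n_0$; assuming (as one may) that $J$ is infinite, color the $n_0$-element subsets of $J$ by the pattern of truth values of these formulas on their increasing sub-tuples, apply Ramsey's theorem to get an infinite homogeneous $J'\subseteq J$, and observe that any increasing $k$-tuple from $J'$ realizes the fragment (the biconditionals hold by homogeneity, and each formula of clause (ii) holds there since it holds of every increasing tuple from $J$). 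Compactness then yields $\bar a$.

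\emph{Part (2).} Since $M$ is a model, $\tp(a/M)$ extends to a global type $p$ finitely satisfiable in $M$: the partial type over $\C$ saying that $x$ realizes $\tp(a/M)$ and that $\neg\varphi(x)$ holds for every formula $\varphi$ over $\C$ with no realization in $M$ is consistent --- otherwise some $\psi\in\tp(a/M)$ would imply a finite disjunction of formulas each empty in $M$, yet $M\prec\C$ and $a\models\psi$ produce a witness of $\psi$, hence of one of the disjuncts, in $M$. Now build a Morley sequence $\bar c=\sequence{c_i}{i<\omega}$ of $p$ over $Mab$, i.e.\ $c_i\models p\upharpoonright Mabc_{<i}$ for all $i$; being a Morley sequence of an $M$-invariant global type built over $Mab\supseteq M$, $\bar c$ is indiscernible over $Mab$, hence over $\emptyset$. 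The decisive observation is that, because $\tp(b/M)=\tp(a/M)=p\upharpoonright M$, both $\langle a\rangle\concat\bar c$ and $\langle b\rangle\concat\bar c$ are Morley sequences of $p$ over $M$: for the first, $a\models p\upharpoonright M$ and, for each $i$, $c_i\models p\upharpoonright Mac_{<i}$ since this is a restriction of $p\upharpoonright Mabc_{<i}$; the argument for the second is symmetric, using $\tp(b/M)=p\upharpoonright M$. Hence each of $\langle a\rangle\concat\bar c$ and $\langle b\rangle\concat\bar c$ is indiscernible over $M$, a fortiori over $\emptyset$, which is what is required.

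The only genuinely load-bearing idea --- everything else being bookkeeping --- is the one exploited in Part (2): that $a\equiv_M b$ \emph{over a model $M$} allows a single coheir Morley construction carried out over $Mab$ to be read off in two ways as a generic extension of $\tp(a/M)=\tp(b/M)$ over $M$, so that the one sequence $\bar c$ admits both $a$ and $b$ as a legitimate first node. If $a$ and $b$ were only known to have the same type over $\emptyset$, or over a parameter set that is not a model, this would break down.
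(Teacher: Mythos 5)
Both proofs are correct, and since the paper states this as a \emph{Fact} and defers entirely to the cited sources (Tent--Ziegler for (1), Aviv's thesis for (2)) rather than giving its own argument, there is no in-paper proof to compare against. Your Part (1) is the canonical Ramsey-plus-compactness extraction of indiscernibles, which is exactly what Tent--Ziegler 5.1.3 does; one small remark is that the hypothesis that $J$ be infinite is not merely a harmless convenience as you phrase it (``as one may'') but is actually necessary for the statement to make sense when $I$ is infinite --- otherwise the ``if $\varphi$ holds of every increasing $J$-tuple'' clause becomes vacuous for large arities and forces $\bar a$ to satisfy contradictory formulas --- so it is an implicit hypothesis you are quietly repairing. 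Your Part (2) is the standard global-coheir / Morley-sequence argument: extend $\tp(a/M)$ to a global type $p$ finitely satisfiable in (hence invariant over) $M$, build a Morley sequence $\bar c$ of $p$ over $Mab$, and observe that since $p\upharpoonright M=\tp(a/M)=\tp(b/M)$, both $a\concat\bar c$ and $b\concat\bar c$ are Morley sequences of $p$ over $M$ and thus $M$-indiscernible. This is correct, and your closing remark correctly isolates why ``$M$ a model'' is essential (existence of the coheir) and why it would fail over an arbitrary parameter set. In short: the proposal is right and uses the standard arguments the paper is implicitly invoking.
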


\subsubsection{\label{sub:Context}Equivalence relations on $\C^{\alpha}$}
\begin{defn}
An equivalence relation $E$ on a set $X$ is called \emph{bounded}
if $\left|X/E\right|<\kappa$. 
\end{defn}
See \cite[Remark 1.12]{BorelCard} for a discussion of bounded invariant
equivalence relations. 

Suppose that $A$ is some small set, $X\subseteq\C^{\alpha}$ is type
definable over $A$, and that $E$ is some $\emptyset$-invariant
relation on $\C^{\alpha\cdot2}$ such that $E\upharpoonright X$ is
a bounded equivalence relation on $X$. 
\begin{defn}
Let $M\supseteq A$ be any model. For $p,q\in S_{X}\left(M\right)$,
we write $p\mathrela{E^{M}}q$ iff $\exists a\models p,b\models q\,\left(a\mathrela Eb\right)$.
\end{defn}
Note that this does not depend on the choice of representatives, i.e.,:
\begin{prop}
\label{prop:For all =00003D exists}For $p,q\in S_{X}\left(M\right)$,
$p\mathrela{E^{M}}q$ iff $\forall a\models p,\forall b\models q\left(a\mathrela Eb\right)$.\end{prop}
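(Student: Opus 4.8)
\textbf{Proof plan for Proposition \ref{prop:For all =00003D exists}.}

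The plan is to reduce the ``$\forall$'' statement to the ``$\exists$'' statement by exploiting the $\emptyset$-invariance of $E$ together with the homogeneity of the monster model $\C$. The point is that for types over a model $M$, any two realizations of the same type are conjugate by an automorphism fixing $M$ pointwise, and such automorphisms preserve $E$.

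First I would fix $p,q\in S_X(M)$ with $p\mathrela{E^M}q$ and, using the definition of $E^M$, pick witnesses $a_0\models p$ and $b_0\models q$ with $a_0\mathrela E b_0$. Now let $a\models p$ and $b\models q$ be arbitrary realizations; the goal is $a\mathrela E b$. Since $a\equiv_M a_0$ and $M$ is a model in the monster $\C$ (which is $\kappa$-homogeneous), there is $\sigma\in\Aut(\C/M)$ with $\sigma(a_0)=a$; similarly there is $\tau\in\Aut(\C/M)$ with $\tau(b_0)=b$. Here one should be slightly careful that $a$ and $a_0$, being in $X\subseteq\C^\alpha$, are small tuples, so homogeneity applies directly to produce these automorphisms. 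Then I would apply $\sigma$ to the pair $(a_0,b_0)\in E$: since $E$ is $\emptyset$-invariant, $(\sigma(a_0),\sigma(b_0))=(a,\sigma(b_0))\in E$. Next apply $\tau\sigma^{-1}\in\Aut(\C/M)$, or rather argue in two steps: from $(a,\sigma(b_0))\in E$ one needs to move $\sigma(b_0)$ to $b$. Note $\sigma(b_0)\equiv_M b_0\equiv_M b$, so there is $\rho\in\Aut(\C/M)$ with $\rho(\sigma(b_0))=b$; but applying $\rho$ also moves $a$. The cleaner route is: $\tau\sigma^{-1}$ fixes $M$ pointwise and sends $a=\sigma(a_0)\mapsto\tau(a_0)$ and $\sigma(b_0)\mapsto\tau(b_0)=b$, which is not quite what is wanted either. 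So the right bookkeeping is to simply apply $\sigma$ to get $(a,\sigma(b_0))\in E$, and separately observe that $\sigma(b_0)\models q$ (as $\sigma$ fixes $M$ and $b_0\models q$); then repeat the argument symmetrically, now with the first coordinate fixed at $a$: since $E\upharpoonright X$ is an equivalence relation we may work within its classes, so it suffices to show $E$ does not distinguish realizations of $q$ over $M$ once one realization is $E$-related to a fixed $a$. Concretely, given $\sigma(b_0)\models q$ with $a\mathrela E\sigma(b_0)$ and arbitrary $b\models q$, pick $\pi\in\Aut(\C/M)$ with $\pi(\sigma(b_0))=b$; then $(\pi(a),b)\in E$ by $\emptyset$-invariance, and since $a\equiv_M\pi(a)$ with $a,\pi(a)\in X$ both realizing $p$, transitivity of $E\upharpoonright X$ together with the already-established fact that all realizations of $p$ related to \emph{some} realization of $q$...

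At this point the slickest formulation is: the relation ``$p\mathrela{E^M}q$'' as defined is symmetric in the roles of $a$ and $b$, so it is enough to prove the one-sided statement that if $a\mathrela E b$ for \emph{some} $a\models p$, $b\models q$, then $a'\mathrela E b$ for \emph{every} $a'\models p$; applying this twice (once in each coordinate) gives the full $\forall\forall$ statement. For the one-sided statement: given $a'\models p$, homogeneity gives $\sigma\in\Aut(\C/M)$ with $\sigma(a)=a'$ and $\sigma(b)=b'$ for some $b'$; since $\sigma$ fixes $M$ and $b\models q$ we get $b'\models q$, and $\emptyset$-invariance of $E$ gives $a'\mathrela E b'$. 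Then $b'\models q$ and $b\models q$ with $a'\mathrela E b'$; running the argument in the second coordinate (fixing $a'$, moving $b'$ to $b$ by an automorphism over $M\cup\{a'\}$, which exists since $b'\equiv_M b$ — here one uses that for a model $M$, types over $M$ are preserved and one can realize $\tp(b/Ma')$ appropriately, or more simply invoke that $E\upharpoonright X$ is an equivalence relation and $a'$ is fixed) yields $a'\mathrela E b$. I do not expect any genuine obstacle here: the entire content is the standard interplay between invariance of $E$ and homogeneity of $\C$, and the only thing requiring a moment's care is keeping track of which automorphisms fix which sets, which is why phrasing the argument as a one-coordinate-at-a-time reduction is worthwhile. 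The proof should be three or four lines in the paper.
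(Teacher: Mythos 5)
Your argument uses only $\emptyset$-invariance of $E$ and homogeneity of $\C$, and this is not enough: with those two ingredients alone the statement is false (take $E$ to be equality on $X$; then $p\mathrela{E^M}p$ always holds, but distinct realizations of $p$ are not $E$-related). The hypothesis you never invoke is that $E$ is \emph{bounded}, and that is where the real content lies. Tracking your argument, the obstruction surfaces exactly where you expect ``bookkeeping'': after one automorphism you reach $(a,\sigma(b_0))\in E$ with $\sigma(b_0)\equiv_M b$, and you then need $\sigma(b_0)\mathrela E b$ to close by transitivity. But ``$c\equiv_M d$ and $c,d\in X$ implies $c\mathrela E d$'' is precisely the proposition you are proving (with $p=q$ and witness $(c,c)$), so the step is circular. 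Your attempt to instead find an automorphism over $M\cup\{a'\}$ fails for the reason you yourself flag: $b'\equiv_M b$ does not give $b'\equiv_{Ma'}b$.

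The paper's proof is one line and goes through the missing ingredient directly: since $E$ is bounded and $\emptyset$-invariant (in particular $A$-invariant), the Lascar strong type relation $\equiv_{L,A}^{\alpha}$ refines $E$ on $X$; and since $M\supseteq A$ is a model, $\equiv_M$ refines $\equiv_{L,A}^{\alpha}$. Hence $a\equiv_M a_0$ already forces $a\mathrela E a_0$ (and likewise $b\mathrela E b_0$), and transitivity of $E\upharpoonright X$ finishes. So what you are missing is not an automorphism-juggling trick but the standard fact ``bounded plus invariant $\Rightarrow$ coarser than Lascar strong type'' (which is where indiscernible sequences and compactness come in), without which no amount of applying $\Aut(\C/M)$ will produce the conclusion.
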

\begin{proof}
Since $E$ is bounded, $\equiv_{L,A}^{\alpha}$ refines it on $X$,
so if $a\equiv_{M}b$ for $a,b\in X$ then $a\mathrela Eb$.\end{proof}
\begin{rem}
\label{rem:G_delta}Suppose $Y\subseteq X$ is pseudo $G_{\delta}$.
For a model $M$, $Y_{M}$ is not necessarily $G_{\delta}$. But in
case $Y$ is $\equiv_{L,A}^{\alpha}$-invariant and $A\subseteq M$,
it is. Indeed, $\C^{\alpha}\backslash Y$ is pseudo $F_{\sigma}$,
and so $\left(\C^{\alpha}\backslash Y\right)_{M}$ is $F_{\sigma}$.
But since $\equiv_{M}$ refines $\equiv_{L,A}^{\alpha}$, $\left(\C^{\alpha}\backslash Y\right)_{M}\cap Y_{M}=\emptyset$.
In addition, if $A$, $T$ and $\alpha$ are countable, $Y$ is pseudo
closed and $\equiv_{L,A}^{\alpha}$-invariant, then $Y_{M}$ is $G_{\delta}$,
so $Y$ is pseudo $G_{\delta}$. In fact, in that case $Y$ is type
definable over $M$. 
\end{rem}
Assume that $E$ is pseudo $F_{\sigma}$. This is equivalent to saying
that there are $\emptyset$-type definable sets $U_{n}\subseteq\C^{\alpha\cdot2}$
for $n<\omega$ such that $E=\bigcup\set{U_{n}}{n<\omega}$ (this
follows by compactness, as $E$ is $\emptyset$-invariant). In this
case the set $U_{n}^{M}=\pi\left(U_{n,M}\right)\subseteq S_{\alpha}\left(M\right)^{2}$
is closed (where $\pi:S_{\alpha\cdot2}\left(M\right)\to S_{\alpha}\left(M\right)^{2}$
is the projection) and hence $E_{M}=\bigcup\set{U_{n}^{M}}{n<\omega}$
is $F_{\sigma}$. We assume that the sequence $\sequence{U_{n}}{n<\omega}$
is in \emph{normal form}, i.e., $U_{0}$ contains the diagonal $\Delta_{X}$,
$U_{n}$ is symmetric and: 
\[
U_{n}\circ U_{n}\upharpoonright X=\set{\left(a,b\right)\in X^{2}}{\exists c\in X\left(a,c\right)\in U_{n}\land\left(c,b\right)\in U_{n}}\subseteq U_{n+1}.
\]
So the $U_{n}$ are increasing on $X$. 
\begin{defn}
\label{def:strongly closed}Suppose $Y\subseteq X$ is $E$ invariant.
We say $E$ is \emph{strongly closed on $Y$} if there exists some
$n<\omega$ such that $E\upharpoonright Y=Y^{2}\cap U_{n}$. Note
that this may depend on the choice of the $U_{n}$'s. 
\end{defn}

\subsubsection{\label{sub:Countable-language}Countable language}

Suppose $T$ and $\alpha$ are countable. In this setting we will
translate our relation $E$ into an $F_{\sigma}$ relation on $X_{M}$,
as was done in \cite{PillayKrupinskiSolecki}.

For a countable model $A\subseteq M$, $S_{\alpha}\left(M\right)$
is Polish and if $Y$ is as in Remark \ref{rem:G_delta} then $Y_{M}$
is a Polish space (every $G_{\delta}$ set is), and similarly to \cite[Proposition 1.41]{BorelCard}
(with the same proof as there) we have:
\begin{prop}
\label{prop:G_delta changing the model-1} Fix a pseudo $G_{\delta}$
set $Y\subseteq X$, such that $Y$ is $E$-invariant. Then for any
two models $A\subseteq M,N$ we have:
\[
E^{M}\upharpoonright Y_{M}\sim_{B}E^{N}\upharpoonright Y_{N}.
\]

\end{prop}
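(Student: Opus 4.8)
The plan is to prove this by constructing, from a Borel reduction witnessing $E^N \upharpoonright Y_N \leq_B E^M \upharpoonright Y_M$, the corresponding reduction in the other direction, and then appealing to symmetry in the roles of $M$ and $N$. Since $A \subseteq M$ and $A \subseteq N$, we may pass to a common countable model $A \subseteq M' \supseteq M, N$ (take any countable elementary submodel of $\C$ containing $M \cup N$), so it suffices to show that for $A \subseteq M \subseteq N$ one has $E^M \upharpoonright Y_M \sim_B E^N \upharpoonright Y_N$; the general case follows by composing the two bi-reductions through $M'$. Note that by Remark \ref{rem:G_delta}, both $Y_M$ and $Y_N$ are $G_\delta$ (hence Polish), so the statement of Borel bi-reducibility makes sense.

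For the direction $E^N \upharpoonright Y_N \leq_B E^M \upharpoonright Y_M$: the restriction map $r\from S_\alpha(N) \to S_\alpha(M)$, $p \mapsto p \upharpoonright M$, is continuous, and I claim its restriction to $Y_N$ lands in $Y_M$ and is a reduction of $E^N \upharpoonright Y_N$ to $E^M \upharpoonright Y_M$. That it maps $Y_N$ into $Y_M$ is immediate since $Y_N$ and $Y_M$ are both defined as the sets of types over $N$ (resp.\ $M$) realized in $Y$. For the reduction property: if $p,q \in Y_N$ and $a \models p$, $b \models q$, then $a \models p \upharpoonright M$ and $b \models q \upharpoonright M$, so by Proposition \ref{prop:For all =00003D exists}, $p \mathrel{E^N} q \iff a \mathrel{E} b \iff p\upharpoonright M \mathrel{E^M} q \upharpoonright M$. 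So $r$ works, and this direction is easy.

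The genuine content is the other direction, $E^M \upharpoonright Y_M \leq_B E^N \upharpoonright Y_N$, and this is where the argument of \cite[Proposition 1.41]{BorelCard} is invoked. The idea is to produce a Borel section, i.e.\ a Borel map $s\from Y_M \to Y_N$ with $r \circ s = \id_{Y_M}$ (equivalently, a Borel uniformization of the closed-fibered surjection $r \upharpoonright Y_N$): such an $s$ is automatically a reduction in the easy direction and, crucially, because $r$ itself is a reduction, $s$ descends to a reduction on quotients in the reverse direction as well --- indeed $p \mathrel{E^M} q \iff r(s(p)) \mathrel{E^M} r(s(q)) \iff s(p) \mathrel{E^N} s(q)$. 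To get the Borel section, one uses that the fibers $r^{-1}(p) \cap S_\alpha(N)$ are closed (even compact) subsets of the Polish space $S_\alpha(N)$, that $Y_N$ is $G_\delta$, and that $r\upharpoonright Y_N$ is a continuous surjection onto $Y_M$; the Jankov--von Neumann uniformization theorem (or the Kuratowski--Ryll-Nardzewski selection theorem applied to the closed-valued map $p \mapsto r^{-1}(p)\cap Y_N$) then yields a Borel --- indeed $\sigma(\mathbf{\Sigma}^1_1)$-measurable --- section. The main obstacle, and the one point requiring care, is verifying that the selector can be taken to land inside $Y_N$ rather than merely inside $S_\alpha(N)$; this is exactly where one uses that $Y$ is $E$-invariant and pseudo $G_\delta$ so that $Y_N$ is $G_\delta$, and that $r(Y_N) = Y_M$ with $r^{-1}(p) \cap Y_N \neq \emptyset$ for every $p \in Y_M$ --- again because both $Y_M$ and $Y_N$ are the realized-type sets of the same $Y$. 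Since the excerpt explicitly says this goes through "with the same proof as there," I would simply cite \cite[Proposition 1.41]{BorelCard} for the uniformization step and restrict the written argument to checking that the reduction/section hypotheses are met in the present (more general) setting, where $E$ is an arbitrary bounded invariant pseudo $F_\sigma$ relation rather than $\mathordi{\equiv_L^\alpha}$.
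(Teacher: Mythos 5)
Your overall strategy --- reduce to the case $A \subseteq M \subseteq N$, use the restriction map $r \from S_\alpha(N) \to S_\alpha(M)$ for the direction $E^N \upharpoonright Y_N \leq_B E^M \upharpoonright Y_M$, and a Borel section of $r$ for the converse --- is the right one, and is evidently what the paper intends by pointing to \cite[Proposition 1.41]{BorelCard}. The easy direction and the reduction to nested models are fine. The gap is in the step producing a \emph{Borel} section landing in $Y_N$. Jankov--von~Neumann only yields a $\sigma(\mathbf{\Sigma}^1_1)$-measurable uniformization, which is universally measurable but not Borel, so by itself it does not witness $\leq_B$ as the paper defines it. Your alternative appeal to Kuratowski--Ryll-Nardzewski for the ``closed-valued map $p \mapsto r^{-1}(p) \cap Y_N$'' is unjustified as written: $Y_N$ is only $G_\delta$, not closed, so intersecting a compact fiber with it has no reason a priori to give a closed set, and your phrasing suggests you are leaning on the $G_\delta$-ness of $Y_N$ for this, which does not help.

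The observation that closes the gap --- and makes the whole selection worry disappear --- is that for every $p \in Y_M$ the \emph{entire} fiber $r^{-1}(p)$ is already contained in $Y_N$. Indeed, if $q \in S_\alpha(N)$ with $q \upharpoonright M = p$, pick $b \models q$ and pick $a \in Y$ with $\tp(a/M) = p$; then $a \equiv_M b$, so by the argument in the proof of Proposition~\ref{prop:For all =00003D exists} we get $a \mathrel{E} b$, hence $b \in Y$ by $E$-invariance of $Y$, i.e.\ $q \in Y_N$. Thus $r^{-1}(p) \cap Y_N = r^{-1}(p)$ is compact for $p \in Y_M$. Now simply take any Borel section of the continuous surjection $r \from S_\alpha(N) \to S_\alpha(M)$ between compact metrizable spaces (one exists by Kuratowski--Ryll-Nardzewski applied to the upper semi-continuous compact-valued fiber map $p \mapsto r^{-1}(p)$, or by the classical selection theorem for continuous surjections of compacta); its restriction to $Y_M$ automatically takes values in $Y_N$. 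With that amendment your argument is complete and does reconstruct the intended proof.
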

So with this assumption and Proposition \ref{prop:G_delta changing the model-1},
we can refer to the Borel cardinality of the $F_{\sigma}$ equivalence
relation $E\upharpoonright Y$ without specifying the model.

\subsubsection{Countable or uncountable language}

Let $T$ be any complete first order theory and $\alpha$ any ordinal.
\begin{defn}
We say that a set $Y\subseteq\C^{\alpha}$ for some small $\alpha$
is \emph{pseudo strong Choquet} if $Y_{M}$ is strong Choquet for
all $M$.\end{defn}
\begin{example}
If $Y\subseteq\C^{\alpha}$ is pseudo closed or pseudo $G_{\delta}$
and $\equiv_{L}^{\alpha}$-invariant, then by Remark \ref{rem:G_delta}
and Proposition \ref{fac:G_delta subset is Choquet} it is pseudo
strong Choquet. \end{example}
\begin{rem}
For countable $T$ and $\alpha$, ``pseudo strong Choquet'' is the
correct analog of pseudo $G_{\delta}$ for $\equiv_{L}^{\alpha}$-invariant
sets. This follows from \cite[Theorem 8.17]{KechrisClassical}. 
\end{rem}

\subsubsection{\label{sub:First-application:-invariant}Invariant equivalence relations
with a nice automorphism group}

Let $C$ be some subset of $X$. Suppose that $\Gamma\leq\Aut\left(\C\right)$.
\begin{defn}
\label{def:proper}
\begin{enumerate}
\item A formula $\varphi\in L_{\alpha}\left(\C\right)$ is said to be\emph{
$C$-generic} if finitely many translates of $\varphi$ under the
action of $\Gamma$ cover $C$. 
\item The formula $\varphi$ is said to be\emph{ $C$-weakly generic} if
there is a non-$C$-generic formula $\psi\in L_{\alpha}\left(\C\right)$
such that $\varphi\vee\psi$ is $C$-generic.
\item A partial type $p\subseteq L_{\alpha}\left(\C\right)$ is said to
be $C$-generic ($C$-weakly generic) if all its formulas are. 
\item A partial type $p\subseteq L_{\alpha}\left(\C\right)$ which is is
closed under conjunctions is said to be \emph{$C$-proper} if there
is a non-$C$-generic formula $\psi$ such that for all $\varphi\in p$,
$\varphi\vee\psi$ is $C$-generic. In general, $p$ is $C$-proper
when its closure under finite conjunctions is.
\end{enumerate}
\end{defn}
For the most part we will omit $C$ from the notation.

For $n<\omega$, let $p_{n}\left(x,y\right)$ be the type defining
$U_{n}$. 
\begin{prop}
\label{prop:proper exists}Suppose that $\Gamma$ is $C$-transitive:
for all $a,b\in C$ there is some $\sigma\in\Gamma$ such that $\sigma\left(a\right)=b$.
Then, for some $n<\omega$ and for all $a\in C$, $p_{n}\left(x,a\right)$
is proper. Moreover, there is a formula $\psi\left(x,y\right)$ such
that $\psi\left(x,a\right)$ is the non-generic formula that witnesses
this.\end{prop}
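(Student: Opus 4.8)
The plan is to reduce everything to a single element $a_0 \in C$ by exploiting $\Gamma$-transitivity, and then to argue by compactness that ``properness'' of the family $\{p_n(x,a_0)\}_{n<\omega}$ must occur at some finite stage. First I would note that since $\sequence{U_n}{n<\omega}$ is a normal form for $E$, the types $p_n(x,a)$ are increasing with $\bigcup_n p_n(x,a)$ defining the $E$-class of $a$, and all the $U_n$ are $\emptyset$-invariant, so $\sigma(p_n(x,a)) = p_n(x,\sigma(a))$ for every $\sigma \in \Gamma$. Hence if $p_n(x,a_0)$ is proper and $\psi_0(x) = \psi(x,a_0)$ is the non-$C$-generic witness, then for any $b \in C$, choosing $\sigma \in \Gamma$ with $\sigma(a_0) = b$, the formula $\psi(x,b) := \sigma(\psi_0(x))$ is again non-$C$-generic (because $C$ is $\Gamma$-invariant, so the property of being $C$-generic is preserved by $\Gamma$), and $\varphi \vee \psi(x,b)$ is $C$-generic for every $\varphi \in p_n(x,b)$ by applying $\sigma$ to the corresponding statement over $a_0$. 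This already gives the ``moreover'': the witnessing formula is obtained uniformly as $\psi(x,y)$ once we have it over the single parameter $a_0$.

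So the crux is: for a fixed $a_0 \in C$, there is some $n$ such that $p_n(x,a_0)$ is $C$-proper. I would argue this by contradiction together with compactness. Suppose no $p_n(x,a_0)$ is proper. Being proper, by Definition \ref{def:proper}(4), means: there is a single non-$C$-generic formula $\psi$ such that $\varphi \vee \psi$ is $C$-generic for all $\varphi$ in the (conjunction-closure of the) type. The negation is that for every non-$C$-generic $\psi$ there is some $\varphi \in p_n(x,a_0)$ with $\varphi \vee \psi$ not $C$-generic. The key observation is that the class $[a_0]_E = \bigcup_n p_n(x,a_0)$ is itself proper — indeed I expect that the relevant notion here is that $[a_0]_E$, being a bounded $E$-class, is non-$C$-generic-complemented in a suitable sense, or more precisely that one can take $\psi$ to express ``$x \not\equiv_L a_0$'' (or the negation of a suitable $U_m$-ball), which is non-$C$-generic precisely because $E$ being bounded forces $[a_0]_E$ to meet every $C$-generic formula. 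Once $[a_0]_E$ is proper with some witness $\psi$, the definition of properness for the whole type $\bigcup_n p_n$ unwinds (since the closure under conjunctions of $\bigcup_n p_n$ is cofinally approximated by the $p_n$) to: for each $\varphi \in p_n(x,a_0)$, $\varphi \vee \psi$ is $C$-generic. But that is exactly saying each $p_n(x,a_0)$ is proper with the same witness $\psi$ — so in fact \emph{every} $p_n$ is proper, and a fortiori some $n$ works; the compactness input is needed to see that the single $\psi$ for the whole union does the job for each finite piece, which is immediate, and conversely that properness of $[a_0]_E$ holds at all.

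The main obstacle, I expect, is pinning down exactly why the full class $[a_0]_E$ is $C$-proper — this is where boundedness of $E$ and the structure of the normal form must be used essentially. The argument should run: if $\psi$ is a candidate non-$C$-generic witness, we need $\varphi \vee \psi$ to be $C$-generic for all $\varphi \in \bigcup_n p_n(x,a_0)$, equivalently $\neg \psi \wedge \neg \varphi$ is not $C$-generic, i.e.\ the pseudo-closed set $\{x : \models \psi(x)\}^c \cap [a_0]_E^c$ contains no $C$-generic formula; taking $\psi$ so that $\neg\psi$ cuts out (a pseudo-open neighborhood of) $[a_0]_E$, one uses that $[a_0]_E$ has bounded index and $C \subseteq X$ to conclude $\{\neg\psi\}$ alone is already ``small'' enough — formally, that $C$ cannot be covered by finitely many $\Gamma$-translates of $\{\neg\psi\} \cup \{\text{anything disjoint from }[a_0]_E\}$, because $\Gamma$-translates of $[a_0]_E$ stay within boundedly many $E$-classes while a $C$-generic formula must meet unboundedly many. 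I would also double-check that the conjunction-closure subtlety in Definition \ref{def:proper}(4) causes no trouble: since the $p_n$ are already closed under conjunctions up to passing to $p_{n+1}$ (by the normal-form condition $U_n \circ U_n \subseteq U_{n+1}$ absorbing finite intersections on $X$), the witness transfers cleanly, and transitivity then spreads it over all of $C$ as in the first paragraph.
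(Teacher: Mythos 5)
Your first paragraph is correct and matches the paper's opening observation: since the $U_n$ are $\emptyset$-invariant, $\sigma(p_n(x,a)) = p_n(x,\sigma(a))$ for $\sigma \in \Gamma$, and genericity is a $\Gamma$-equivariant notion, so once $p_n(x,a_0)$ is shown proper with witness $\psi(x,a_0)$, the same $n$ and the formula $\psi(x,y)$ work uniformly over $C$ by transitivity.

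The crux, however --- producing some $n$ for which $p_n(x,a_0)$ is $C$-proper --- is not actually proven in your proposal, and the sketch offered does not go through. You try to route through the claim that the full class $[a_0]_E = \bigcup_n p_n(x,a_0)$ is proper, but several things are off. First, $[a_0]_E$ is pseudo-$F_\sigma$, not type-definable, so it is not a partial type and Definition \ref{def:proper}(4) does not directly apply to it; and since $U_n \subseteq U_{n+1}$ on $X$, the types $p_n(x,a_0)$ are (semantically) decreasing in strength as $n$ grows, not increasing, so your reading of ``$\bigcup_n p_n(x,a_0)$'' as a type defining the class is incorrect. Second, your argument for why there should be a single non-generic formula $\psi$ whose complement ``cuts out'' a neighborhood of $[a_0]_E$ never materializes --- you gesture at boundedness but never perform the compactness argument; and ``$\varphi\vee\psi$ is $C$-generic'' is not equivalent to ``$\neg\psi\wedge\neg\varphi$ is not $C$-generic'' (genericity is a covering condition, not an ideal/filter condition, so this duality does not hold). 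Third, the remark that the conjunction-closure of $p_n$ is absorbed into $p_{n+1}$ via $U_n\circ U_n\subseteq U_{n+1}$ conflates composition of relations with conjunction of formulas; they are unrelated. The paper's proof instead argues by contradiction: if no $p_n(x,a_0)$ were proper, one inductively chooses $\varphi_n \in p_n(x,a_0)$ so that $\bigvee_{i<n}\varphi_i$ is never $C$-generic (non-properness of $p_n$ provides the next $\varphi_n$ given that the current disjunction is a non-generic candidate witness); from non-genericity and $C$-transitivity one then extracts a sequence $\sequence{a_i}{i<\omega}$ in $C$ with $\neg\varphi_n(a_i,a_j)$ for all $j<i$ and all $n$; Ramsey and compactness (Fact \ref{fac:indiscernibles exist}, using that $X$ is type-definable) turn this into an indiscernible sequence in $X$ that is pairwise $E$-inequivalent, which by compactness can be stretched to arbitrary length, contradicting boundedness of $E$ on $X$. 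This inductive-construction-plus-indiscernibles argument is the content you are missing.
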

\begin{proof}
First observe that if $p_{n}\left(x,a\right)$ is proper for some
$a\in C$, $\psi\left(x,a\right)$ witnesses this and $b\in C$, then
$p_{n}\left(x,b\right)$ is proper with $\psi\left(x,b\right)$ witnessing
it. So fix some $a\in C$.

Note that if $\psi\left(x,a\right)$ is not generic, then we can construct
inductively a sequence $a_{i}\in C$ for $i<\omega$ such that $\neg\psi\left(a_{i},a_{j}\right)$
for $j<i$: let $a_{0}=a$, and for $n+1$, let $\sigma_{0},\ldots,\sigma_{n}\in\Gamma$
be such that $\sigma_{i}\left(a\right)=a_{i}$ (so $\sigma_{0}=\id$)
and let $a_{n+1}\not\models\bigvee_{i\leq n}\sigma\left(\psi\left(x,a\right)\right)=\bigvee_{i\leq n}\psi\left(x,a_{i}\right)$.
By Ramsey and compactness (Fact \ref{fac:indiscernibles exist}),
there is an $A$-indiscernible sequence $\sequence{b_{i}}{i<\omega}$
in $X$ with the property that $\neg\psi\left(b_{i},b_{j}\right)$
for $j<i$. Here we used the fact that $X$ is type definable. 

Now suppose that for no $n<\omega$ is $p_{n}\left(x,a\right)$ proper.
This allows us to inductively construct formulas $\varphi_{n}\left(x,a\right)\in p_{n}\left(x,a\right)$
such that $\bigvee_{i<n}\varphi_{n}$ is not generic. By the remark
above and compactness, there is an $A$-indiscernible sequence $\sequence{b_{i}}{i<\omega}$
in $X$ such that for all $n<\omega$ and $j<i<\omega$, $\neg\varphi_{n}\left(b_{i},b_{j}\right)$.
But this means that $\left(b_{i},b_{j}\right)\notin U_{n}$ for all
$j<i<\omega$ and $n<\omega$, so $\neg E\left(b_{i},b_{j}\right)$.
By compactness, we may increasing the length of the sequence to any
length, contradicting the fact that $E$ is bounded on $X$. 
\end{proof}
Now assume that $C$ is $\Gamma$ invariant, and fix some $a\in C$.
By taking a countable union of models $M_{i}$ and a countable union
of subsets $\Gamma_{i}$ of $\Gamma$, we can find a model $M$ of
size $\left|A\right|+\left|L\right|+\left|\alpha\right|$ containing
$A$ and a subgroup $\Gamma^{*}\leq\Gamma$ of the size $\left|\alpha\right|+\left|L\right|$
such that:
\begin{enumerate}
\item $\left\{ a\right\} \cup A\subseteq M$. 
\item For all $\sigma\in\Gamma^{*}$, $\sigma\left(M\right)=M$ setwise.
\item If $\varphi$ is a formula over $a$ which is generic, then there
are finitely many elements from $\Gamma^{*}$ which witness this. 
\end{enumerate}
Recall that the Stone space $S_{\alpha}\left(M\right)$ has a natural
topology in which basic open sets are of the form $\left[\varphi\right]=\set{p\in S_{\alpha}\left(M\right)}{\varphi\in p}$.
When $r$ is a partial type, i.e., a consistent set of formulas over
$M$, we denote by $\left[r\right]$ the set $\set{p\in S_{\alpha}\left(M\right)}{r\subseteq p}$.
This set is compact. 

By (2) above, $\Gamma^{*}$ is a group of homeomorphisms of $S_{\alpha}\left(M\right)$. 
\begin{lem}
\label{lem:positive proper}Suppose $\left[a\right]_{E}\subseteq C\subseteq Y\subseteq X$
is $\Gamma$ invariant and that $\Gamma$ is $C$-transitive. Let
$\Uu$ be the family of open sets $U\subseteq Y_{M}$ for which there
is no finite set $\Delta\subseteq\Gamma^{*}$ with $C_{M}\subseteq\Delta\cdot U$
(all in the induced Stone space topology). Then for some $n<\omega$,
the compact set $\left[p_{n}\left(x,a\right)\right]\subseteq Y_{M}$
is $I_{\Uu}$-positive.\end{lem}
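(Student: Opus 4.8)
The plan is to deduce the lemma from Proposition~\ref{prop:proper exists} by transporting $C$-properness from $\C$ into the Stone space $S_{\alpha}\left(M\right)$. By (the negation of) Remark~\ref{rem:equivalent definition}, it suffices to exhibit a single $U\in\Uu$ such that no $V\in\Uu$ contains $\left[p_{n}\left(x,a\right)\right]\cup U$, and the set $U$ will be built from the non-$C$-generic formula $\psi\left(x,y\right)$ supplied by Proposition~\ref{prop:proper exists}. Note that $\left[p_{n}\left(x,a\right)\right]$ is closed, hence compact, in $S_{\alpha}\left(M\right)$, and that it lies inside $Y_{M}$: a realization of $p_{n}\left(x,a\right)$ in $X$ is $U_{n}$-related to $a$, hence in $\left[a\right]_{E}\subseteq Y$ by the normal form.

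First I would record a dictionary relating $C$-genericity of a formula over $a$ to membership of a corresponding open set in $\Uu$. For a formula $\theta\left(x,a\right)$ over $a$, set $U_{\theta}=\left[\theta\left(x,a\right)\right]\cap Y_{M}$, an open subset of $Y_{M}$; note that $Y_{M}$ is $\Gamma^{*}$-invariant since $Y$ is $\Gamma$-invariant. I claim that $\theta\left(x,a\right)$ is $C$-generic iff $U_{\theta}\notin\Uu$. If $\theta\left(x,a\right)$ is $C$-generic, property~(3) in the choice of $M$ and $\Gamma^{*}$ gives $\sigma_{1},\ldots,\sigma_{k}\in\Gamma^{*}$ with $C\subseteq\bigcup_{i}\theta\left(x,\sigma_{i}\left(a\right)\right)$; projecting to types over $M$ and using $\sigma_{i}\cdot\left[\theta\left(x,a\right)\right]=\left[\theta\left(x,\sigma_{i}\left(a\right)\right)\right]$ and $\sigma_{i}\cdot Y_{M}=Y_{M}$, one gets $C_{M}\subseteq\left\{ \sigma_{1},\ldots,\sigma_{k}\right\} \cdot U_{\theta}$, so $U_{\theta}\notin\Uu$. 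Conversely, if $U_{\theta}\notin\Uu$, say $C_{M}\subseteq\Delta\cdot U_{\theta}$ with $\Delta\subseteq\Gamma^{*}$ finite, then each $c\in C$ has $\tp\left(c/M\right)\in\sigma\cdot\left[\theta\left(x,a\right)\right]=\left[\theta\left(x,\sigma\left(a\right)\right)\right]$ for some $\sigma\in\Delta$, so $c\models\theta\left(x,\sigma\left(a\right)\right)$; hence finitely many $\Gamma^{*}$- (a fortiori $\Gamma$-) translates of $\theta\left(x,a\right)$ cover $C$, i.e., $\theta\left(x,a\right)$ is $C$-generic. Applying this to $\theta=\psi$, the fact that $\psi\left(x,a\right)$ is non-$C$-generic means $U:=U_{\psi}\in\Uu$.

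The main argument then runs as follows. Fix the $n$ and the formula $\psi\left(x,y\right)$ from Proposition~\ref{prop:proper exists}, and let $q$ be the closure of $p_{n}\left(x,a\right)$ under finite conjunctions, so $\left[q\right]=\left[p_{n}\left(x,a\right)\right]$ and, by $C$-properness, $\chi\vee\psi\left(x,a\right)$ is $C$-generic for every $\chi\in q$. Suppose toward a contradiction that some $V\in\Uu$ satisfies $\left[p_{n}\left(x,a\right)\right]\cup U_{\psi}\subseteq V$. Write $V=V'\cap Y_{M}$ with $V'\subseteq S_{\alpha}\left(M\right)$ open; since $\left[p_{n}\left(x,a\right)\right]=\bigcap_{\chi\in q}\left[\chi\right]$ is compact and contained in $Y_{M}$, hence in $V'$, compactness yields some $\chi\in q$ with $\left[\chi\right]\subseteq V'$, so $U_{\chi}=\left[\chi\right]\cap Y_{M}\subseteq V$; combined with $U_{\psi}\subseteq V$ this gives $U_{\chi\vee\psi}=U_{\chi}\cup U_{\psi}\subseteq V$. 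But $\chi\vee\psi\left(x,a\right)$ is $C$-generic, so by the dictionary $U_{\chi\vee\psi}\notin\Uu$: there is a finite $\Delta\subseteq\Gamma^{*}$ with $C_{M}\subseteq\Delta\cdot U_{\chi\vee\psi}\subseteq\Delta\cdot V$, contradicting $V\in\Uu$. Hence no such $V$ exists, and by Remark~\ref{rem:equivalent definition} the compact set $\left[p_{n}\left(x,a\right)\right]$ is $I_{\Uu}$-positive.

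I expect the genuinely delicate point to be the dictionary step of the second paragraph: this is precisely where the three closure and invariance properties built into the choice of $M$ and $\Gamma^{*}$ are used---above all property~(3), which is what forces ``$C$-generic over $\C$'' and ``covered by finitely many $\Gamma^{*}$-translates inside $S_{\alpha}\left(M\right)$'' to agree for formulas over $a$---together with the $\Gamma$-invariance of $C$ and $Y$. The compactness extraction of a single $\chi$ and the manipulations with the ideal $I_{\Uu}$ (via Remark~\ref{rem:equivalent definition} and Proposition~\ref{proposition:ideal}) are routine.
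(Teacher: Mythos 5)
Your argument is correct and is precisely the ``unwinding'' that the paper's one-line proof alludes to: the paper fixes $n$ via Proposition~\ref{prop:proper exists}, asserts the same dictionary (generic over $a$ $\iff$ $[\varphi]\cap Y_{M}\notin\Uu$, via property~(3) of $M$ and $\Gamma^{*}$), and then declares the conclusion clear; your write-up carries this out carefully, correctly invoking Remark~\ref{rem:equivalent definition} with $U=U_{\psi}$ and compactness of $[p_{n}(x,a)]$ to extract the single conjunction $\chi\in q$ witnessing failure of $I_{\Uu}$-smallness.
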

\begin{proof}
By Proposition \ref{prop:proper exists}, for some $n<\omega$, $p_{n}\left(x,a\right)$
is proper. By (3) above, if a formula $\varphi$ over $M$ is generic
then $\left[\varphi\right]\cap Y_{M}\notin\Uu$ and the converse also
holds. Unwinding the definitions, the proposition is clear. 
\end{proof}
Assume now that $E\upharpoonright C$ is not strongly closed, that
$\Gamma$ is $C$-transitive and that $C=\left[a\right]_{E}$. Since
$U_{n}$ is $\emptyset$-invariant and $\Gamma$ is $C$-transitive,
this means that for any $n<\omega$, there is some $b\in C$ such
that $\left(a,b\right)\notin U_{n}$. By enlarging $\Gamma^{*}$ and
$M$, we may assume:
\begin{enumerate}
\item [(4)]For all $n<\omega$ there is $\sigma\in\Gamma^{*}$ such that
$\left(a,\sigma\left(a\right)\right)\notin U_{n}$. 
\end{enumerate}
We are now ready to state our result:
\begin{thm}
\label{thm:countable, first case}Assume that $T$, $A\subseteq\C$
and $\alpha$ are countable. Suppose that:
\begin{enumerate}
\item $X\subseteq\C^{\alpha}$ is some type definable set over $A$.
\item $E$ is a pseudo $F_{\sigma}$ $\emptyset$-invariant equivalence
relation on $X$ with normal form $\sequence{U_{n}}{n<\omega}$ and
$E$ is bounded on $X$.
\item $C\subseteq X$ is an $E$ class, and $E\upharpoonright C$ is not
strongly closed (with respect to $\sequence{U_{n}}{n<\omega}$).
\item $C\subseteq Y\subseteq X$ is pseudo $G_{\delta}$ and $E$ invariant. 
\item $\Gamma\leq\Aut\left(\C\right)$ is $C$-transitive, and preserves
all $E$-classes (in particular, it preserves $X$).
\end{enumerate}
Then $E\upharpoonright Y$ is not smooth (see Proposition \ref{prop:G_delta changing the model-1}). \end{thm}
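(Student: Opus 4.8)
The plan is to push everything into the Stone space and invoke Theorem~\ref{thm:embedding2}. Carry out the construction preceding the statement to obtain a countable model $M$ with $\{a\}\cup A\subseteq M$ and a countable subgroup $\Gamma^{*}\le\Gamma$ satisfying (1)--(4); condition (4) is available precisely because $E\upharpoonright C$ is not strongly closed and $\Gamma$ is $C$-transitive. Let $\calU$ be the family of open $U\subseteq Y_{M}$ with no finite $\Delta\subseteq\Gamma^{*}$ such that $C_{M}\subseteq\Delta\cdot U$; by Lemma~\ref{lem:positive proper} there is $n_{0}<\omega$ for which the compact set $K:=[p_{n_{0}}(x,a)]$ is $\ideal{\calU}$-positive. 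Note $K\subseteq C_{M}\subseteq Y_{M}$, since any $p\in K$ is realised by some $b$ with $(b,a)\in U_{n_{0}}\subseteq E$, hence $b\in[a]_{E}=C$. Because $E$ is bounded on $X$, $\equiv_{L,A}^{\alpha}$ refines $E\upharpoonright X$, so the $E$-invariant set $Y$ is $\equiv_{L,A}^{\alpha}$-invariant; as $A$, $T$, $\alpha$ are countable, Remark~\ref{rem:G_delta} makes $Y_{M}$ a $G_{\delta}$, hence Polish, subspace of $S_{\alpha}(M)$. Each $\sigma\in\Gamma^{*}$ fixes $M$ setwise and so induces a homeomorphism of $S_{\alpha}(M)$ preserving $Y_{M}$ and $C_{M}$; thus $\Gamma^{*}$ acts as a group of homeomorphisms of $Y_{M}$ leaving $C_{M}$ invariant. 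Finally set $R_{n}:=U_{n}^{M}\cap(Y_{M}\times Y_{M})$; by the normal form and closedness of $U_{n}^{M}=\pi(U_{n,M})$ these form an increasing sequence of reflexive symmetric closed subsets of $Y_{M}\times Y_{M}$. We will apply Theorem~\ref{thm:embedding2} with $Y_{M},\Gamma^{*},C_{M},\calU,\langle R_{n}\rangle_{n},K$ in the roles of $X,\Gamma,Y,\calU,\langle R_{n}\rangle_{n},K$.

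The verification of the hypotheses of Theorem~\ref{thm:embedding2} rests on one ``Stone-space arithmetic'' fact: from the normal form $U_{i}\circ U_{j}\upharpoonright X\subseteq U_{\max(i,j)+1}$, and lifting realisations of types through automorphisms of $\C$ fixing $M$ pointwise upgrades this to $U_{i}^{M}\circ U_{j}^{M}\subseteq U_{\max(i,j)+1}^{M}$, so that $R_{n}^{(2)}\subseteq U_{n+1}^{M}$. Hypothesis (c) holds because $U_{n}$ is $\emptyset$-invariant and $\Gamma^{*}$ fixes $M$ setwise, so $U_{n}^{M}$ and hence $R_{n}$ is $\Gamma^{*}$-invariant. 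For (b), fix $\gamma\in\Gamma^{*}$; since $a,\gamma(a)\in C$ we have $(a,\gamma(a))\in U_{m}$ for some $m$, and for any $x=\tp(b/M)\in K$, applying $\gamma$ to $(b,a)\in U_{n_{0}}$ and chaining gives $(b,\gamma(b))\in U_{n_{0}}\circ U_{m}\circ U_{n_{0}}\subseteq U_{\max(n_{0},m)+2}$, whence $(x,\gamma\cdot x)\in R_{\max(n_{0},m)+2}$. For (a), given $n$ use (4) to pick $\sigma\in\Gamma^{*}$ with $(a,\sigma(a))\notin U_{\max(n_{0},n+1)+2}$; if some $x=\tp(b/M)\in K$ satisfied $(x,\sigma\cdot x)\in R_{n}^{(2)}\subseteq U_{n+1}^{M}$, then lifting the witnesses onto $b$ and onto $\sigma(b)$ (using $a,\sigma(a)\in M$ and symmetry of $U_{n_{0}}$) would give $(a,\sigma(a))\in U_{n_{0}}\circ U_{n+1}\circ U_{n_{0}}\subseteq U_{\max(n_{0},n+1)+2}$, a contradiction; so the same $\sigma$ in fact works for every $x\in K$.

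Theorem~\ref{thm:embedding2} then delivers a continuous injective homomorphism $\phi\colon 2^{\omega}\to Y_{M}$ from $(\Ezero,\setcomplement{\Ezero})$ into $(E_{\Gamma^{*}}^{Y_{M}},\setcomplement{\bigcup_{n}R_{n}})$. Here $\bigcup_{n}R_{n}=E^{M}\upharpoonright Y_{M}$ as $E=\bigcup_{n}U_{n}$, and $E_{\Gamma^{*}}^{Y_{M}}\subseteq E^{M}\upharpoonright Y_{M}$: if $\sigma\cdot\tp(b/M)=\tp(c/M)$ with $\sigma\in\Gamma^{*}$ then $c\equiv_{M}\sigma(b)$, so $c\mathrel{E}\sigma(b)$ (as $\equiv_{M}$ refines $E$) and $\sigma(b)\mathrel{E}b$ (as $\Gamma$ preserves $E$-classes), hence $c\mathrel{E}b$. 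Therefore $y\mathrel{\Ezero}z\iff\phi(y)\mathrel{E^{M}}\phi(z)$, i.e.\ $\phi$ is a Borel reduction of $\Ezero$ to the $F_{\sigma}$ equivalence relation $E^{M}\upharpoonright Y_{M}$; by Fact~\ref{fac:E_0 is not definable smooth} the latter is non-smooth, and by Proposition~\ref{prop:G_delta changing the model-1} this does not depend on $M$, which is precisely the assertion that $E\upharpoonright Y$ is not smooth.

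The step I expect to be the main obstacle is hypothesis (b) of Theorem~\ref{thm:embedding2}: bounding the $U_{n}$-distance from $b$ to $\gamma(b)$ uniformly in $b$. It is for this reason that one uses Theorem~\ref{thm:embedding2} rather than Theorem~\ref{thm:embedding1}, whose stronger (b) would require such a bound uniformly over the entire orbit $\Gamma^{*}\cdot K$ --- equivalently a uniform bound on the $U_{n}$-distance from $a$ to $\delta(a)$ as $\delta$ ranges over $\Gamma^{*}$ --- which need not exist. What rescues the weaker version is the compactness of $K$ together with the single ``anchor'' $(b,a)\in U_{n_{0}}$ shared by all elements of $K$. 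The remaining points --- closedness and composition behaviour of the $U_{n}^{M}$, that $K\subseteq Y_{M}$, and that $Y_{M}$ is Polish --- are routine Stone-space bookkeeping.
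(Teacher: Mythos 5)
Your proposal is correct and follows essentially the same route as the paper's proof: push everything into $S_\alpha(M)$ for a suitable countable $M$ and countable $\Gamma^*\le\Gamma$ satisfying (1)--(4), take $K=[p_{n_0}(x,a)]$ with Lemma~\ref{lem:positive proper} supplying $\ideal{\calU}$-positivity, set $R_n=U_n^M\upharpoonright Y_M$, verify (a), (b), (c) of Theorem~\ref{thm:embedding2} by chaining the $U_n$'s through the anchor $(b,a)\in U_{n_0}$ and the $\emptyset$-invariance of each $U_n$, and finally observe that assumption (5) forces $E_{\Gamma^*}^{Y_M}\subseteq E^M\upharpoonright Y_M=\bigcup_n R_n$. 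Your exposition spells out the Stone-space bookkeeping (closure of $U_n^M$ under composition, $K\subseteq C_M\subseteq Y_M$, $\Gamma^*$-invariance of $Y_M$, why Theorem~\ref{thm:embedding2} rather than~\ref{thm:embedding1} is needed) in more detail than the paper's terse version, but the indices and the structure of the argument match.
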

\begin{proof}
Keeping the notation from above, this follows directly from Theorem
\ref{thm:embedding2} with $X$ there being $Y_{M}$ (note that it
is $\Gamma^{*}$ invariant by assumptions (4) and (5) and that it
is Polish by (4) and Remark \ref{rem:G_delta}), $\Gamma$ there being
$\Gamma^{*}$ here, $Y$ there being $C_{M}$ here, $R_{n}$ there
being $U_{n}^{M}\upharpoonright Y_{M}$ here and $K$ there being
$\left[p_{k}\left(x,a\right)\right]$ for some $k<\omega$, chosen
by Proposition \ref{prop:proper exists} (note that as $Y$ contains
$C$, $Y_{M}$ contains $\left[p_{k}\left(x,a\right)\right]$, so
it is compact). By assumption (5), Theorem \ref{thm:embedding2}'s
$E_{\Gamma}^{X}$ is contained in $E_{M}\upharpoonright Y_{M}$, so
checking that the conditions of this theorem hold will suffice:

By Lemma \ref{lem:positive proper}, $K$ is $I_{\Uu}$-positive. 

Condition (a) there follows from assumption (3) here. Note that if
$p\in\left[p_{m}\left(x,a\right)\right]$, $q\mathrela{R_{n}}p$ and
$b\models q$ then $\left(a,b\right)\in U_{\max\left\{ n,m\right\} +1}$
(because there is some $b'\models q,c\models p$ such that $\left(b',c\right)\in U_{n}$,
but $\left(c,a\right)\in U_{m}$ so $\left(b',a\right)\in U_{\max\left\{ n,m\right\} +1}$
but $b\equiv_{a}b'$). So $q\in\left[p_{\max\left\{ n,m\right\} +1}\left(x,a\right)\right]$.
From this computation it follows that if $p\in\left[p_{k}\left(x,a\right)\right]$
and $q\mathrela{R_{n}^{\left(2\right)}}p$ for some $n\geq k$ then
for all $b\models q$, $\left(a,b\right)\in U_{n+2}$. So if $\sigma\in\Gamma^{*}$
is such that $\left(a,\sigma\left(a\right)\right)\notin U_{n+3}$
for $n\geq k$, then for all $p\in\left[p_{k}\left(x,a\right)\right]$,
$\left(p,\sigma\left(p\right)\right)\notin R_{n}^{\left(2\right)}$
(because for $b\models p$, $\left(\sigma\left(a\right),\sigma\left(b\right)\right)\in U_{k}$).

Condition (b) there follows similarly. As $\Gamma$ preserves $E$
classes, there is some $n<\omega$ such that $\left(a,\sigma\left(a\right)\right)\in U_{n}$.
So if $p\in\left[p_{k}\left(x,a\right)\right]$, then for all $b\models p$,
$\left(\sigma\left(b\right),b\right)\in U_{\max\left\{ n,k\right\} +3}$. 

Condition (c) there follows from the fact that $\Gamma\leq\Aut\left(\C\right)$
and that for all $n<\omega$, $U_{n}$ is $\emptyset$-invariant. \end{proof}
\begin{thm}
\label{thm:Choquet case, version 1}Let $T$, $A$ and $\alpha$ be
of any (small) size. Then under the same conditions as Theorem \ref{thm:countable, first case}
replacing (4) with: 
\begin{enumerate}
\item [(4)]$C\subseteq Y\subseteq X$ is pseudo strong Choquet and $E$
invariant.
\end{enumerate}
$E\upharpoonright Y$ has at least $2^{\aleph_{0}}$ classes. \end{thm}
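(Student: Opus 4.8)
The plan is to run exactly the argument of Theorem \ref{thm:countable, first case}, but with Theorem \ref{thm:embedding Choquet2} in place of Theorem \ref{thm:embedding2}. First I would set up the same objects as in the countable case: a small (not necessarily countable) model $M\supseteq A$ and a subgroup $\Gamma^{*}\le\Gamma$ satisfying conditions (1)--(4) of Subsection \ref{sub:First-application:-invariant} — so that $\Gamma^{*}$ acts on $S_{\alpha}(M)$ by homeomorphisms, witnesses genericity of formulas over $a$ by finitely many elements, and contains, for each $n$, some $\sigma$ with $(a,\sigma(a))\notin U_{n}$. Here the target space is $Y_{M}$, which is no longer Polish, but by assumption (4) it is pseudo strong Choquet, hence $Y_{M}$ is strong Choquet; and since $Y$ is type definable over $A$ with all elements of the same type, $Y_{M}$ is also regular (it is a subspace of the compact Hausdorff space $S_{\alpha}(M)$, closed under the topology induced there when $Y$ is pseudo closed, and $G_{\delta}$ when $Y$ is only pseudo $G_{\delta}$, but regularity is inherited from $S_{\alpha}(M)$ in all cases). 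So all the hypotheses on $X$ in Theorem \ref{thm:embedding Choquet1} (regular strong Choquet, $\Gamma$-invariant $Y$, closed increasing $R_{n}$) are met with $R_{n}=U_{n}^{M}\upharpoonright Y_{M}$ and $Y$ there $=C_{M}$.

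Next I would verify the hypotheses of Theorem \ref{thm:embedding Choquet2}, i.e. the assumptions (a), (b), (c) of Theorem \ref{thm:embedding2} for the compact $\ideal{\calU}$-positive set $K=[p_{k}(x,a)]\subseteq Y_{M}$. These are verified by precisely the computations already carried out in the proof of Theorem \ref{thm:countable, first case}: Lemma \ref{lem:positive proper} gives that $K$ is $\ideal{\calU}$-positive (its proof uses only $C$-transitivity and property (3) of $M$, not countability); assumption (3) of the theorem (not strongly closed) plus property (4) of $M$ gives (a) via the estimate that $q\mathrel{R_{n}}p$ with $p\in[p_{m}(x,a)]$ forces $q\in[p_{\max\{n,m\}+1}(x,a)]$; the fact that $\Gamma$ preserves $E$-classes gives (b); and (c) is immediate from $\Gamma^{*}\le\Aut(\C)$ together with $\emptyset$-invariance of each $U_{n}$. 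None of these steps used that the language or $\alpha$ is countable, so they transfer verbatim.

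Finally, applying Theorem \ref{thm:embedding Choquet2} I get a map $\phi\colon 2^{\omega}\to\SS(Y_{M})$ sending each $y$ to a nonempty closed $G_{\delta}$ subset of $Y_{M}$, with $\phi(z)$ a $\Gamma^{*}$-translate of $\phi(y)$ when $z\mathrel{\EZero}y$ and $\phi(y)\times\phi(z)\subseteq\setcomplement{\union[n][R_{n}]}$ when $\neg z\mathrel{\EZero}y$. Choosing one point $p_{y}\in\phi(y)$ for each $y$ and noting that $\Gamma^{*}$ preserves $E_{M}$-classes (as $\Gamma$ preserves $E$-classes), the points $p_{y}$ for $y$ ranging over $\EZero$-inequivalent elements of $2^{\omega}$ are pairwise $E_{M}$-inequivalent — indeed $(p_{y},p_{z})\notin R_{n}$ for all $n$ means $(p_{y},p_{z})\notin E_{M}$. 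Since there are $2^{\aleph_{0}}$ $\EZero$-classes, $E_{M}\upharpoonright Y_{M}$ has at least $2^{\aleph_{0}}$ classes, hence so does $E\upharpoonright Y$ (these correspond by Proposition \ref{prop:For all =00003D exists}). The only point requiring care — and the main obstacle — is confirming that in the uncountable setting $Y_{M}$ genuinely lands in the hypothesis ``regular strong Choquet space'' of Theorem \ref{thm:embedding Choquet1}: regularity is automatic from the ambient Stone space, and strong Choquetness is exactly the content of ``pseudo strong Choquet,'' so the substitution of hypothesis (4) is precisely what makes the Choquet-game version of the embedding theorem applicable where the complete-metric version was not.
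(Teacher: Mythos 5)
Your proposal is correct and follows the same route as the paper, which simply invokes Theorem \ref{thm:embedding Choquet2} in place of Theorem \ref{thm:embedding2} and repeats the verifications from the proof of Theorem \ref{thm:countable, first case}. Your checks are accurate: regularity of $Y_{M}$ is inherited from the compact Hausdorff Stone space, strong Choquetness is exactly assumption (4), and the verifications of $\ideal{\calU}$-positivity of $K$ and of conditions (a)--(c) via Proposition \ref{prop:proper exists} and Lemma \ref{lem:positive proper} involve no countability, while the counting at the end (picking a point in each $\phi(y)$ and using that $\phi(y)\times\phi(z)\subseteq\setcomplement{\bigcup_{n}R_{n}}=\setcomplement{E^{M}\upharpoonright Y_{M}}$ for $\EZero$-inequivalent $y,z$) yields $2^{\aleph_{0}}$ classes of $E^{M}\upharpoonright Y_{M}$, hence of $E\upharpoonright Y$ by Proposition \ref{prop:For all =00003D exists}.
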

\begin{proof}
Follows similarly from Theorem \ref{thm:embedding Choquet2} as in
the proof of Theorem \ref{thm:countable, first case}.\end{proof}
\begin{thm}
\label{thm:countable case, version 2} Suppose $T$, $A$ and $\alpha$
are countable, and the same assumptions as in Theorem \ref{thm:countable, first case}
hold, except (4) and (5) which we replace by:
\begin{enumerate}
\item [(4)] $C\subseteq Y\subseteq X$ is pseudo $G_{\delta}$ and $\Gamma$
invariant. 
\item [(5)]$\Gamma\leq\Aut\left(\C\right)$ is $C$-transitive, and for
all $\sigma\in\Gamma$ there is some $n<\omega$ such that for all
$c\in C$, $\left(c,\sigma\left(c\right)\right)\in U_{n}$. 
\end{enumerate}
Then $E\upharpoonright Y$ is not smooth. \end{thm}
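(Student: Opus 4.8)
The plan is to apply Theorem~\ref{thm:embedding1} (in fact the weak version, where in~(a) the quantifier $\exists\gamma$ precedes $\forall x\in K$) in much the way that the proof of Theorem~\ref{thm:countable, first case} applied Theorem~\ref{thm:embedding2}; the point is that the uniform bound built into the new hypothesis~(5) delivers the \emph{strong} form of condition~(b) of Theorem~\ref{thm:embedding1}, which is exactly what compensates for $\Gamma$ no longer being assumed to preserve \emph{all} $E$-classes. Keeping the notation fixed just before Theorem~\ref{thm:countable, first case}, I would first pass to a countable model $M\supseteq A$ with $a\in M$ and a countable $\Gamma^{*}\leq\Gamma$ with $\sigma(M)=M$ for all $\sigma\in\Gamma^{*}$, such that genericity of formulas over $a$ is witnessed inside $\Gamma^{*}$ and, using that $E\upharpoonright C$ is not strongly closed together with $C$-transitivity, for each $n$ there is $\sigma_{n}\in\Gamma^{*}$ with $\pair{a}{\sigma_{n}(a)}\notin U_{n}$. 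Next I would observe that hypothesis~(5) forces $\Gamma$, hence $\Gamma^{*}$, to preserve $C=[a]_{E}$: for $\sigma\in\Gamma$ and $c\in C$ we get $\pair{c}{\sigma(c)}\in U_{n}\subseteq E$ for some $n$, so $\sigma(c)\in[c]_{E}=C$, and applying this to $\inverse{\sigma}$ gives $\sigma(C)=C$; in particular $C_{M}$ is $\Gamma^{*}$-invariant. Finally, by Proposition~\ref{prop:proper exists} and Lemma~\ref{lem:positive proper} there is $k<\omega$ for which $K:=[p_{k}(x,a)]$ is a compact $\ideal{\calU}$-positive subset of $C_{M}\subseteq Y_{M}$, where $\calU$ is the family of open $U\subseteq Y_{M}$ with no finite $\Delta\subseteq\Gamma^{*}$ satisfying $C_{M}\subseteq\Delta\cdot U$ (note $K\subseteq C_{M}$, since any realization of $p_{k}(x,a)$ is $U_{k}$-related, hence $E$-related, to $a$), and $Y_{M}$ is Polish by Remark~\ref{rem:G_delta}, as in Theorem~\ref{thm:countable, first case}.

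Then I would invoke Theorem~\ref{thm:embedding1} with $X$ there being $Y_{M}$, $\Gamma$ being $\Gamma^{*}$, $Y$ being $C_{M}$, $R_{n}$ being $U_{n}^{M}\upharpoonright Y_{M}$ (closed, since $U_{n}^{M}$ is), and $K$ as above. Condition~(a) is checked by the same arithmetic of the $U_{n}$'s as in Theorem~\ref{thm:countable, first case}: if $p\in K$ and $p\mathrel{\relationpower{R_{n}}4}\sigma(p)$, unravelling yields $\pair{a}{\sigma(a)}\in U_{\max\{n,k\}+5}$, so $\sigma_{\max\{n,k\}+5}$ works, and it depends only on $n$. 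The new ingredient is condition~(b): given $\sigma\in\Gamma^{*}$, hypothesis~(5) supplies a single $n$ with $\pair{c}{\sigma(c)}\in U_{n}$ for \emph{every} $c\in C$; since $\Gamma^{*}\cdot K\subseteq C_{M}$, each $p\in\Gamma^{*}\cdot K$ equals $\tp(c/M)$ for some $c\in C$, so $\pair{p}{\sigma(p)}=\pair{\tp(c/M)}{\tp(\sigma(c)/M)}\in U_{n}^{M}=R_{n}$, with $n$ uniform over all of $\Gamma^{*}\cdot K$ --- precisely condition~(b) of Theorem~\ref{thm:embedding1}, which is stronger than the corresponding hypothesis of Theorem~\ref{thm:embedding2} and is why~(5) was imposed with a uniform bound over $C$. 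Theorem~\ref{thm:embedding1} then yields $x\in K$ and a continuous injective homomorphism $\phi\from\Cantorspace\to\closure{\Gamma^{*}\cdot x}$ from $\pair{\Ezero}{\setcomplement{\Ezero}}$ into $\pair{\orbitequivalencerelation{\Gamma^{*}}{Y_{M}}}{\setcomplement{\bigcup_{n}R_{n}}}$, where $\bigcup_{n}R_{n}=E_{M}\upharpoonright Y_{M}$.

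It remains to upgrade $\phi$ to a Borel reduction, i.e.\ to prove $c\mathrel{\Ezero}d\Rightarrow\phi(c)\mathrel{E_{M}}\phi(d)$ (the converse being immediate since $\phi$ maps $\setcomplement{\Ezero}$ into $\setcomplement{\bigcup_{n}R_{n}}$). The key observation is that condition~(b) passes to closures: since each $R_{n}$ is closed and each $\sigma\in\Gamma^{*}$ acts as a homeomorphism of $Y_{M}$, for every $\sigma\in\Gamma^{*}$ there is $n$ with $p\mathrel{R_{n}}\sigma(p)$ for all $p\in\closure{\Gamma^{*}\cdot K}$, so $\orbitequivalencerelation{\Gamma^{*}}{Y_{M}}$ refines $E_{M}$ on $\closure{\Gamma^{*}\cdot K}$. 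Now $x\in K$, so the range of $\phi$ lies in $\closure{\Gamma^{*}\cdot x}\subseteq\closure{\Gamma^{*}\cdot K}$; and when $c\mathrel{\Ezero}d$, writing $c=s\concat e$ and $d=t\concat e$ with $\cardinal{s}=\cardinal{t}$, the homomorphism property established in the proof of Theorem~\ref{thm:embedding1}, namely $\phi(s'\concat e)=\gamma_{s'}\cdot\phi(\constantsequence{0}{\cardinal{s'}}\concat e)$, gives $\phi(d)=\gamma_{t}\inverse{\gamma_{s}}\cdot\phi(c)$ with $\gamma_{t}\inverse{\gamma_{s}}\in\Gamma^{*}$, whence $\phi(c)\mathrel{E_{M}}\phi(d)$. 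Hence $\Ezero\sqsubseteq_{c}E_{M}\upharpoonright Y_{M}$, so $E\upharpoonright Y$ is not smooth by Proposition~\ref{prop:G_delta changing the model-1}. I expect this last step to be the main obstacle: once~(5) is weakened so that $\Gamma$ need not preserve all $E$-classes, one can no longer argue as in Theorem~\ref{thm:countable, first case} that $\orbitequivalencerelation{\Gamma^{*}}{Y_{M}}\subseteq E_{M}$ outright, and the remedy is precisely that the orbit relation still refines $E_{M}$ on $\closure{\Gamma^{*}\cdot K}$ --- a set in which, crucially, Theorem~\ref{thm:embedding1} (unlike a bare homomorphism statement) confines the range of $\phi$.
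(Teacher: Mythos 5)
Your proposal is correct and follows essentially the same route as the paper's proof: apply Theorem~\ref{thm:embedding1} (or~\ref{thm:embedding2}), note that hypothesis~(5) supplies the uniform bound needed for condition~(b), and then observe that since $p\mathrel{R_{n}}\sigma(p)$ is a closed condition in $p$ for each fixed $\sigma\in\Gamma^{*}$, the orbit relation $\orbitequivalencerelation{\Gamma^{*}}{Y_{M}}$ still refines $E_{M}$ on $\closure{\Gamma^{*}\cdot x}$, which is exactly where $\phi$ lands. The paper states this more tersely (fixing a single $p\in[p_{k}(x,a)]$ rather than arguing over all of $\closure{\Gamma^{*}\cdot K}$), but the key idea --- passing condition~(b) to the closure to repair the loss of the containment $\orbitequivalencerelation{\Gamma^{*}}{Y_{M}}\subseteq E_{M}$ --- is identical.
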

\begin{proof}
To prove this theorem we could use either Theorem \ref{thm:embedding1}
or Theorem \ref{thm:embedding2} similarly to the proof of Theorem
\ref{thm:countable, first case}. The conditions there hold, but since
$\Gamma$ may not preserve $E$ classes, it is not clear that $E_{\Gamma}^{X}$
is contained in $E^{M}\upharpoonright Y_{M}$. To solve this problem,
we note that for any $x\in K$ (which is just $\left[p_{k}\left(x,a\right)\right]$
for some $a\in C$, $k<\omega$), $E_{\Gamma}^{X}\upharpoonright\closure{\Gamma\cdot x}$
is contained in $E^{M}\upharpoonright Y_{M}$, and recall that the
the image of the embedding $\phi$ of either Theorem \ref{thm:embedding1}
or Theorem \ref{thm:embedding2} is into $\closure{\Gamma\cdot x}$
for some $x\in K$. 

Indeed, fix some $p\in\left[p_{k}\left(x,a\right)\right]$ and $\sigma\in\Gamma^{*}$,
and let $n<\omega$ correspond to (5). Then for any $q\in\closure{\Gamma^{*}\cdot p}$,
$\left(\sigma\left(q\right),q\right)\in U_{n}^{M}$ as this is a closed
condition. 
\end{proof}
As above we give a general analog (using Theorem \ref{thm:embedding Choquet1}
or Theorem \ref{thm:embedding Choquet2}). Unfortunately, in this
case, being pseudo strong Choquet is not enough in order to prove
the theorem since we do not know that the range of $\phi$ can be
chosen to be $\closure{\Gamma\cdot x}$. 
\begin{thm}
\label{thm:Choquet case, version 2} Let $T$, $A$ and $\alpha$
be of any (small) size. Then under the same conditions as Theorem
\ref{thm:countable case, version 2} replacing (4) with: 
\begin{enumerate}
\item [(4)]$C\subseteq Y\subseteq X$ is pseudo closed and and $\Gamma$
invariant. 
\end{enumerate}
$E\upharpoonright Y$ has at least $2^{\aleph_{0}}$ classes. \end{thm}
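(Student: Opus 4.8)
The plan is to mimic the proof of Theorem \ref{thm:countable case, version 2}, but now working with a general (small) $T$, $A$ and $\alpha$, so that the relevant Stone spaces are no longer Polish. The substitute for ``$X$ is a complete metric space'' will be ``$X$ is a regular strong Choquet space'', which is exactly what is needed to invoke Theorem \ref{thm:embedding Choquet2} in place of Theorem \ref{thm:embedding2}. First I would fix a representative $a\in C$ of the $E$-class, and --- exactly as in the paragraph preceding Theorem \ref{thm:countable, first case} --- build a small model $M\supseteq A$ with $a\in M$ and a small subgroup $\Gamma^{*}\le\Gamma$ satisfying conditions (1)--(4) there: $\Gamma^{*}$ setwise stabilizes $M$, generic formulas over $a$ are witnessed by finitely many elements of $\Gamma^{*}$, and for every $n$ there is $\sigma\in\Gamma^{*}$ with $(a,\sigma(a))\notin U_n$ (using that $E\upharpoonright C$ is not strongly closed together with $C$-transitivity). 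Since $Y$ is now assumed pseudo closed and $\equiv_{L}^{\alpha}$-invariant (being $E$-invariant with $E$ bounded, hence refined by $\equiv_{L}^{\alpha}$), Remark \ref{rem:G_delta} gives that $Y_M$ is type definable over $M$, in particular pseudo closed, hence a closed subspace of the compact Hausdorff space $S_\alpha(M)$; by the Example following Fact \ref{fac:G_delta subset is Choquet} (compact spaces with a clopen basis are strong Choquet) and Fact \ref{fac:G_delta subset is Choquet} itself, $Y_M$ is a regular strong Choquet space, and it is compact.

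Next I would set up the data for Theorem \ref{thm:embedding Choquet2}: take the ambient space to be $Y_M$, the group to be $\Gamma^{*}$ (a group of homeomorphisms of $Y_M$ by condition (2)), the invariant set $Y$ of the theorem to be $C_M$, the closed relations $R_n$ to be $U_n^{M}\upharpoonright Y_M$ (closed because $U_n$ is type definable), and $\calU$ the family of open $U\subseteq Y_M$ admitting no finite $\Delta\subseteq\Gamma^{*}$ with $C_M\subseteq\Delta\cdot U$. For the compact $\ideal{\calU}$-positive set $K$, Proposition \ref{prop:proper exists} supplies $k<\omega$ with $p_k(x,a)$ proper, and Lemma \ref{lem:positive proper} (which does not use countability) tells us $K=[p_k(x,a)]\subseteq Y_M$ is $\ideal{\calU}$-positive; it is compact and contained in $Y_M$ since $C\subseteq Y$. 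I would then verify assumptions (a), (b), (c) of Theorem \ref{thm:embedding2} for this $K$ exactly as in the proof of Theorem \ref{thm:countable, first case}: (a) from the triangle-inequality computation on the $U_n$'s together with condition (4) above giving $\sigma$ with $(a,\sigma(a))\notin U_{n+3}$; (b) from condition (5) of Theorem \ref{thm:countable case, version 2}, which says each $\sigma\in\Gamma$ moves elements of $C$ only a bounded $U_n$-distance; and (c) from the fact that $\Gamma\le\Aut(\C)$ acts by automorphisms while each $U_n$ is $\emptyset$-invariant. Theorem \ref{thm:embedding Choquet2} then yields, for some $x\in K$, a homomorphism $\phi\from\Cantorspace\to\closure{\Gamma^{*}\cdot x}$ realizing $(\Ezero,\setcomplement{\Ezero})\to(\orbitequivalencerelation{\Gamma^{*}}{Y_M},\setcomplement{\union[n\in\N][R_n]})$, and moreover, since $Y_M$ is compact, with range exactly $\closure{\Gamma^{*}\cdot x}$.

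Finally I would convert this into a statement about classes of $E\upharpoonright Y$. As in the proof of Theorem \ref{thm:countable case, version 2}, the subtlety is that $\Gamma$ need not preserve $E$-classes, so a priori $\orbitequivalencerelation{\Gamma^{*}}{Y_M}$ need not be contained in $E^{M}\upharpoonright Y_M$; but on $\closure{\Gamma^{*}\cdot x}$ it is, because for $p\in K$ and $\sigma\in\Gamma^{*}$ condition (5) gives $n$ with $(\sigma(q),q)\in U_n^{M}$ for all $q$ in the $\Gamma^{*}$-orbit of $p$, and this is a closed condition, hence persists on $\closure{\Gamma^{*}\cdot p}$. Since the range of $\phi$ lies in such a closure, distinct $\Ezero$-classes map to $E$-inequivalent points of $Y_M$ (the $\neg R_n$ clause, valid for all $n$, upgrades to $\neg E^{M}$ because $E^{M}=\bigcup_n R_n$ on $Y_M$), so $\phi$ induces an injection from $\Cantorspace/\Ezero$ into $Y_M/(E^{M}\upharpoonright Y_M)$, whence $E\upharpoonright Y$ has at least $2^{\aleph_0}$ classes. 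The main obstacle I anticipate is purely bookkeeping: making sure $Y_M$ really is strong Choquet and compact in the uncountable setting --- this is exactly why the hypothesis on $Y$ had to be strengthened from ``pseudo strong Choquet'' (Theorem \ref{thm:Choquet case, version 1}) to ``pseudo closed'' here, since without compactness one cannot guarantee the range of $\phi$ from Theorem \ref{thm:embedding Choquet1}/\ref{thm:embedding Choquet2} is of the form $\closure{\Gamma^{*}\cdot x}$, and that form is precisely what made the containment in $E^{M}$ work.
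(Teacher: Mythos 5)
Your proof is correct and follows the paper's intended route: the paper leaves Theorem \ref{thm:Choquet case, version 2} without a written proof, but the remark immediately preceding it sketches exactly this argument --- use compactness of $Y_M$ (available because $Y$ is pseudo closed) to invoke the ``moreover'' clause of Theorem \ref{thm:embedding Choquet1}/\ref{thm:embedding Choquet2}, landing the range of $\phi$ in $\closure{\Gamma^{*}\cdot x}$, and then transfer the containment $\orbitequivalencerelation{\Gamma^{*}}{Y_{M}}\upharpoonright\closure{\Gamma^{*}\cdot x}\subseteq E^{M}\upharpoonright Y_{M}$ exactly as in the proof of Theorem \ref{thm:countable case, version 2}. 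One minor slip: in this theorem $Y$ is assumed $\Gamma$-invariant, not $E$-invariant, so your appeal to $\equiv_{L}^{\alpha}$-invariance via Remark \ref{rem:G_delta} is not justified --- but this is harmless, since $Y_{M}$ is automatically compact for any pseudo closed $Y$: if $Y$ is type definable over a small set $B$, then $Y_{M}$ is the continuous image of the closed set $Y_{M\cup B}\subseteq S_{\alpha}(M\cup B)$ under the restriction map, hence compact and closed in $S_{\alpha}(M)$, and therefore a regular strong Choquet space with a clopen basis.
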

\begin{cor}
For $E=\mathordi{\equiv_{L}^{\alpha}}$, the group $\Autf\left(\C\right)$
satisfies both the condition of Theorem \ref{thm:countable, first case}
and Theorem \ref{thm:countable case, version 2}, and so \cite[Main Theorems A and B]{BorelCard}
both follow directly. 

In addition, \cite[Fact 1.1]{BorelCard} has an obvious analog (at
least in the countable case) for the cases described in Theorem \ref{thm:countable, first case}
and Theorem \ref{thm:countable case, version 2}. In particular, in
these cases, an $E$ class is closed iff it is $G_{\delta}$ iff $E$
is strongly closed on it, and if $T$ is small and $\alpha$ is finite
then all classes are closed. 
\end{cor}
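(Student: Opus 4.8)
The plan is to invoke Theorems~\ref{thm:countable, first case} and~\ref{thm:countable case, version 2} with $X=\C^{\alpha}$, $A=\emptyset$, $E=\equiv_{L}^{\alpha}$ and $\Gamma=\Autf(\C)$, taking as normal form the sequence $\sequence{U_{n}}{n<\omega}$ in which $U_{n}(x,y)$ is the $\emptyset$-type-definable set saying that the Lascar distance between $x$ and $y$ is at most $2^{n}$; this is reflexive, symmetric, satisfies $U_{n}\circ U_{n}\subseteq U_{n+1}$ and has union $\equiv_{L}^{\alpha}$, and since $\equiv_{L}^{\alpha}$ has fewer than $\kappa$ classes it is a bounded $\emptyset$-invariant pseudo $F_{\sigma}$ equivalence relation. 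Hypotheses (1)--(2) of Theorem~\ref{thm:countable, first case} thus hold, and hypotheses (3)--(4) are exactly the hypotheses of \cite[Main Theorem A]{BorelCard}: taking $C=[x]_{\equiv_{L}^{\alpha}}$, ``$[x]_{\equiv_{L}^{\alpha}}$ has unbounded Lascar diameter'' is by Definition~\ref{def:strongly closed} the statement that $E\upharpoonright C$ is not strongly closed, and ``$Y$ pseudo $G_{\delta}$ and $\equiv_{L}^{\alpha}$-invariant with $x\in Y$'' gives $C\subseteq Y\subseteq X$ with $Y$ pseudo $G_{\delta}$ and $E$-invariant.

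It remains to verify the conditions on $\Gamma=\Autf(\C)$. By definition $\Autf(\C)$ acts transitively on each $\equiv_{L}^{\alpha}$-class, so it is $C$-transitive; and each $\sigma\in\Autf(\C)$ fixes every Lascar strong type, so $\sigma([a]_{\equiv_{L}^{\alpha}})=[a]_{\equiv_{L}^{\alpha}}$, i.e.\ $\Gamma$ preserves all $E$-classes, which is condition (5) of Theorem~\ref{thm:countable, first case}. For condition (5) of Theorem~\ref{thm:countable case, version 2} one needs a uniform displacement bound: writing $\sigma=\tau_{1}\cdots\tau_{m}$ with $\tau_{i}\in\Aut(\C/M_{i})$ for models $M_{i}$, for every $c$ and every $i$ the tuples $\tau_{i+1}\cdots\tau_{m}(c)$ and $\tau_{i}\cdots\tau_{m}(c)$ agree over $M_{i}$ and hence are at Lascar distance $\le 1$, so $c$ and $\sigma(c)$ are at Lascar distance $\le m$, uniformly in $c$; thus $(c,\sigma(c))\in U_{n}$ whenever $2^{n}\ge m$. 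Since $\Autf(\C)$-invariance coincides with $\equiv_{L}^{\alpha}$-invariance, condition (4) of Theorem~\ref{thm:countable case, version 2} also holds. Both theorems now apply, so $\equiv_{L}^{\alpha}\upharpoonright Y$ is non-smooth; this is \cite[Main Theorem A]{BorelCard} via Theorem~\ref{thm:countable, first case} and \cite[Main Theorem B]{BorelCard} via Theorem~\ref{thm:countable case, version 2}.

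For the analog of \cite[Fact~1.1]{BorelCard}, fix an $E$-class $C$ (automatically $E$-invariant). If $E\upharpoonright C=C^{2}\cap U_{n}$ then, as $C^{2}\subseteq E$, in fact $C^{2}\subseteq U_{n}$, so for $a\in C$ we have $C=\set{b\in X}{(a,b)\in U_{n}}$, which is type-definable over $a$; hence $C$ is pseudo closed (and, in the countable case, $C_{M}$ is closed in $S_{\alpha}(M)$ for $a\in M$, cf.\ Remark~\ref{rem:G_delta}). Trivially pseudo closed implies pseudo $G_{\delta}$. Conversely, if $C$ is pseudo $G_{\delta}$ but $E\upharpoonright C$ is not strongly closed, then applying Theorem~\ref{thm:countable, first case} with $Y=C$ makes $E\upharpoonright C$ non-smooth --- impossible, as $E\upharpoonright C=C\times C$ has a single class. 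This yields the chain ``$E$-class closed $\iff$ pseudo $G_{\delta}$ $\iff$ strongly closed''. Finally, if $T$ is small and $\alpha$ is finite, then on finite tuples $\equiv_{L}^{\alpha}$ coincides with equality of types over the countable set $\acl^{\eq}(\emptyset)$, so every $\equiv_{L}^{\alpha}$-class is type-definable, hence strongly closed by the equivalence just established, so all classes are closed.

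The work is mostly bookkeeping: matching ``unbounded Lascar diameter'' with ``not strongly closed'' and checking that the chosen $\sequence{U_{n}}{n<\omega}$ is a normal form. The one step needing a genuine (if short) argument is the uniform displacement bound for a single Lascar strong automorphism, which falls out of writing $\Autf(\C)$ as a finite product of model-fixing automorphisms; and the last clause rests on the standard fact that in a small theory Lascar strong types of finite tuples are equality of types over $\acl^{\eq}(\emptyset)$, which is precisely where ``$T$ small'' and ``$\alpha$ finite'' enter.
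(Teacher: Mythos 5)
The first two paragraphs are essentially right and match the intended (routine) verification: one checks that $\Autf(\C)$ is $C$-transitive on each Lascar strong type, preserves all $\equiv_L^\alpha$-classes for Theorem~\ref{thm:countable, first case}, and gives a uniform $U_n$-displacement bound for Theorem~\ref{thm:countable case, version 2} by writing $\sigma$ as a finite product of automorphisms fixing models pointwise. One small slip: if $c\equiv_{M}c'$ then $d_L(c,c')\le 2$, not $\le 1$ --- same type over a model does not in general put two tuples on a common indiscernible sequence; one needs the intermediate $\bar c$ from Fact~\ref{fac:indiscernibles exist}(2). So the bound is $d_L(c,\sigma(c))\le 2m$, which is still uniform, so the conclusion stands. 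The chain ``strongly closed $\Rightarrow$ pseudo closed $\Rightarrow$ pseudo $G_\delta$ $\Rightarrow$ strongly closed'' is exactly right; the observation that an $E$-class $C$ with $E\upharpoonright C=C^2\cap U_n$ satisfies $C=\set{b\in X}{(a,b)\in U_n}$ because $U_n\subseteq E$ is the clean way to see ``strongly closed $\Rightarrow$ pseudo closed''.

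The problem is the final clause. You claim that for small $T$ and finite $\alpha$, ``$\equiv_L^\alpha$ coincides with equality of types over $\acl^{\eq}(\emptyset)$, so every $\equiv_L^\alpha$-class is type-definable.'' This overstates what is known and, more importantly, short-circuits the point of the corollary. The standard consequence of smallness (Newelski) is that $\equiv_L^\alpha$ has bounded Lascar diameter on each class, hence is type-definable, hence $\equiv_L^\alpha=\equiv_{KP}^\alpha$; it does not say $\equiv_L^\alpha$ equals the (a priori coarser) Shelah strong type relation $\equiv_s^\alpha$, i.e.\ equality over $\acl^{\eq}(\emptyset)$. Worse, invoking Newelski's boundedness result here is circular in spirit: that theorem is one of the statements the paper's machinery is designed to recover (compare ($*$), ($***$) in the introduction and Corollary~\ref{cor:Newelski}). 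The intended route is internal: use the equivalence you just established together with a smallness argument showing each class is pseudo $G_\delta$ --- this is the analogue of the argument in Corollary~\ref{cor:Newelski}(4), where smallness makes $S_\alpha(\emptyset)$ countable and hence every $\emptyset$-invariant set pseudo $G_\delta$. You should address the extra wrinkle that a single $\equiv_L^\alpha$-class, unlike the subgroup $G$ there, is not $\emptyset$-invariant (only $\Autf(\C)$-invariant), so the pseudo-$G_\delta$-ness of $C$ requires a little more care than a bare cardinality count of $S_\alpha(\emptyset)$. As written, your last paragraph both asserts an unverified fact and bypasses the machinery it is meant to illustrate.
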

We can also deduce that \cite[Corollary 1.12]{Newelski} hold for
the cases described above (both for countable and uncountable languages),
which begs the question:
\begin{problem}
Do our result extend to any $\emptyset$-invariant $F_{\sigma}$ relation?\end{problem}
\begin{rem}
One of the properties of $\equiv_{L}$ is that if $a\equiv_{M}b$
for some model $M$, then $d_{L}\left(a,b\right)\leq2$ where $d_{L}$
is the Lascar metric. An analog for $E$ and its normal form would
be that for some $n<\omega$, if $M\supseteq A$ and $a\equiv_{M}b$
then $\left(a,b\right)\in U_{n}$. This has no reason to hold in general.
However, if $\Gamma$ is $C$-transitive then for some $n<\omega$
and all $M$ and $\Gamma^{*}$ as in (1)--(3) above, there is a nonempty
$\Gamma^{*}$-invariant closed subset $S\subseteq S_{X}\left(M\right)$
such that for any $p\in S\cap C_{M}$, if $b,c\models p$ then $\left(b,c\right)\in U_{n+1}$.
Moreover, it is dense in the following sense: for every $b\in C$,
there is some $c\in C$ such that $\tp\left(c/M\right)\in S$ and
$\left(b,c\right)\in U_{n}$.

Indeed, let $n<\omega$ be such that $p_{n}\left(x,a\right)$ is proper
for all $a\in C$, and let $\psi\left(x,y\right)$ be the formula
that witnesses this (see Proposition \ref{prop:proper exists}). Let
$a\in C$, $M$ and $\Gamma^{*}$ be as in (1)--(3). Let $S$ be the
set of types $\left[\set{\neg\psi\left(x,\sigma\left(a\right)\right)}{\sigma\in\Gamma^{*}}\right]$.
This is obviously closed and $\Gamma^{*}$-invariant. Suppose $p\in S\cap C_{M}$
and $b,c\models p$. We will show that $\left(b,c\right)\in U_{n+1}$,
i.e., $\left(b,c\right)\models p_{n+1}$. Let $\xi\left(x,y\right)\in p_{n+1}$,
and let $\chi\left(x,y\right)\in p_{n}$ be such that $\chi\left(x,y\right)\wedge\chi\left(z,y\right)\to\xi\left(x,z\right)$.
Since $\chi\left(x,a\right)\vee\psi\left(x,a\right)$ is generic,
for some $\sigma\in\Gamma^{*}$, $b,c\models\chi\left(x,\sigma\left(a\right)\right)\vee\psi\left(x,\sigma\left(a\right)\right)$,
but by the definition of $S$, $b,c\models\chi\left(x,\sigma\left(a\right)\right)$.
It follows that $\xi\left(b,c\right)$ holds. 

We also need to show the denseness property. Fix some $b\in C$. It
is enough to show that the set $\set{\neg\psi\left(x,\sigma\left(a\right)\right)}{\sigma\in\Gamma^{*}}\cup p_{n}\left(x,b\right)$
is consistent. Suppose not, so for some $\xi\left(x,y\right)\in p_{n}\left(x,y\right)$
and some finite $\Delta\subseteq\Gamma^{*}$, $\xi\left(x,b\right)\to\bigvee_{\sigma\in\Delta}\psi\left(x,\sigma\left(a\right)\right)$.
Since $\Gamma$ is $C$-transitive, for some $\tau\in\Gamma$, $\tau\left(b\right)=a$,
so $\xi\left(x,a\right)$ implies $\bigvee_{\sigma\in\Delta}\psi\left(x,\tau\circ\sigma\left(a\right)\right)$.
But then $\bigvee_{\sigma\in\Delta}\psi\left(x,\tau\circ\sigma\left(a\right)\right)\vee\psi\left(x,a\right)$
is generic --- contradiction. 

This observation could have been used in the proof of e.g., Theorem
\ref{thm:countable, first case}, using $S\cap Y_{M}$ as our Polish
space. 
\end{rem}

\subsubsection{Definable and type definable group action }
\begin{defn}
For an ordinal $\beta$, $\left(H,\cdot\right)$ is a \emph{type definable
group} contained in $\C^{\beta}$ when $H$ is type definable and
the multiplication is type definable. 
\end{defn}
Suppose $\left(H,\cdot\right)$ is a type definable group over $\emptyset$.
Let $G$ be an $\emptyset$-invariant pseudo $F_{\sigma}$ subgroup.
In this case $G$ has a normal form: $G=\bigcup\set{V_{n}}{m<\omega}$
where $V_{n}$ is $\emptyset$-type definable, $\left\{ e\right\} \in V_{n}$,
$V_{n}$ is symmetric ($V_{n}=V_{n}^{-1}$), and $V_{n}^{\cdot2}\subseteq V_{n+1}$.

Suppose that $X\subseteq\C^{\alpha}$ is $\emptyset$-type definable
and that $*$ is an $\emptyset$-type definable group action of $H$
on $X$. In particular, the orbit equivalence relation of the action
$E_{H}^{X}$ is a closed invariant equivalence relation on $X$ and
$E_{G}^{X}$ is an $\emptyset$-invariant pseudo $F_{\sigma}$ equivalence
relation in the sense discussed in the previous subsection, with normal
form defined by: 
\[
U_{n}=\set{\left(a,b\right)\in X\x X}{\exists g\in V_{n}\left(g*a=b\right)}
\]
 for $n<\omega$. 
\begin{defn}
To simplify notation, we call such a tuple $\bar{D}=\left(\alpha,\beta,G,H,\sequence{V_{n},U_{n}}{n<\omega},\cdot,X,*\right)$
an\emph{ $F_{\sigma}$ action}. If $E_{G}^{X}$ is bounded, we call
$\bar{D}$ a \emph{bounded $F_{\sigma}$ action}. \end{defn}
\begin{example}
For an $\emptyset$-type definable group $G\subseteq\C^{\alpha}$,
$G_{\emptyset}^{000}$ is defined as the smallest bounded index invariant
subgroup of $G$ and it is generated by the set $\set{a^{-1}\cdot b}{a\equiv_{L}^{\alpha}b,\, a,b\in G}$.
So, letting $W_{n}=\set{a^{-1}\cdot b}{d_{L}\left(a,b\right)\leq n}$
where $d_{L}$ is the Lascar distance, we see that $G=\bigcup_{n<\omega}V_{n}$
where $V_{n}=\set{\prod_{i<n}c_{i}^{\pm1}}{c_{i}\in W_{n}}$. See
\cite{gismatulin} for more. So $\left(\alpha,\alpha,G_{\emptyset}^{000},G,\sequence{V_{n},U_{n}}{n<\omega},\cdot,G,\cdot\right)$
is a bounded $F_{\sigma}$ action. 
\end{example}
We shall need a technical assumption that seems necessary for this
approach to work. 
\begin{defn}
\label{def:normal}We say that $a\in X$ is \emph{shifty} if one of
the following holds:
\begin{enumerate}
\item (\emph{Right} shifty) For every $k<\omega$ there exists $n=n_{k}<\omega$
such that for any $g_{1},g_{2}\in H$ if $\left(g_{1}*a,g_{2}*a\right)\in U_{k}$
then $\left(\left(g_{1}\cdot g_{2}^{-1}\right)*a,a\right)\in U_{n}$
\uline{or}:
\item (\emph{Left} shifty) For every $k<\omega$ there exists $n=n_{k}<\omega$
such that for any $g_{1},g_{2}\in H$ if $\left(g_{1}*a,g_{2}*a\right)\in U_{k}$
then $\left(\left(g_{1}^{-1}\cdot g_{2}\right)*a,a\right)\in U_{n}$
and if $\left(\left(g_{1}^{-1}\cdot g_{2}\right)*a,a\right)\in U_{k}$
then $\left(g_{1}*a,g_{2}*a\right)\in U_{n}$.
\end{enumerate}
\end{defn}
\begin{rem}
In both cases, we may safely assume that $n_{k}\geq k$. \end{rem}
\begin{example}
Suppose $a\in X$ and $\stab_{H}\left(a\right)\trianglelefteq H$.
Then $a$ is right shifty. \end{example}
\begin{proof}
Let $k<\omega$ be given and let $n=k$. If $g_{2}*a=\left(h\cdot g_{1}\right)*a$
for $h\in V_{n}$ then $\left(g_{2}^{-1}\cdot h\cdot g_{1}\right)*a=a$
and since $\stab_{H}\left(a\right)$ is normal, $\left(h\cdot g_{1}\cdot g_{2}^{-1}\right)*a=a$
so $\left(g_{1}\cdot g_{2}^{-1}\right)*a=h^{-1}*a$. As $V_{n}$ is
symmetric, we are done. \end{proof}
\begin{example}
Suppose that for every $k<\omega$ there exists $n<\omega$ such that
for any $c,d\in G*a$ and $g\in H$, if $\left(c,d\right)\in U_{k}$
then $\left(g*c,g*d\right)\in U_{n}$. Then $a$ is left shifty. This
happens for instance when $V_{n}$ is definable for all $n<\omega$
and $G$ is a normal subgroup of $H$.\end{example}
\begin{proof}
If $V_{n}$ is definable, then by compactness for every $k<\omega$
there is some $n<\omega$ such that for all $g\in H$, $gV_{k}g^{-1}\subseteq V_{n}$.
So if there is some $h\in V_{k}$ such that $c=h*d$, then $g*c=\left(g\cdot h\right)*d$,
but $g\cdot h=h'\cdot g$ for $h'\in V_{n}$ so $\left(g*c,g*d\right)\in U_{n}$.\end{proof}
\begin{lem}
\label{lem:formula multiplication-1}Suppose $\varphi\left(x,y\right)$
is some formula where $x$ comes from the first $\alpha$ variables.
Then there is a formula $\psi\left(x',y,z\right)$ with $x'$ coming
from the first $\alpha$ variables such that for every $g\in H$,
and any $a\in\C^{\lg\left(y\right)}$, $g*\left(\varphi\left(\C^{\alpha},a\right)\cap X\right)=\left(\psi\left(\C^{\alpha},a,g\right)\cap X\right)$.\end{lem}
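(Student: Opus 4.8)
The plan is to run the standard ``uniform definability of translates'' compactness argument available for type definable group actions. Fix for the moment $g\in H$ and a parameter $a$. Since $*$ is a group action, $c\mapsto g*c$ is a bijection of $X$ with inverse $c\mapsto g^{-1}*c$; hence $g*(\varphi(\C^\alpha,a)\cap X)\subseteq X$, and for every $x'\in X$ exactly one of $x'\in g*(\varphi(\C^\alpha,a)\cap X)$ and $x'\in g*(\neg\varphi(\C^\alpha,a)\cap X)$ holds, according to whether $\models\varphi(g^{-1}*x',a)$ or $\models\neg\varphi(g^{-1}*x',a)$. So it suffices to find one formula $\psi(x',y,z)$, without parameters, such that for all $g\in H$, all $a$ and all $x'\in X$:
\[
x'\in g*(\varphi(\C^\alpha,a)\cap X)\;\Rightarrow\;\models\psi(x',a,g),\qquad x'\in g*(\neg\varphi(\C^\alpha,a)\cap X)\;\Rightarrow\;\models\neg\psi(x',a,g).
\]
Together with the dichotomy just noted, these two implications force $g*(\varphi(\C^\alpha,a)\cap X)=\psi(\C^\alpha,a,g)\cap X$, which is the assertion of the lemma.

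To build $\psi$, first fix a partial type $\Theta(z,x,x')$ over $\emptyset$ whose solution set is the graph $\{(g,c,d):g\in H,\,c,d\in X,\,g*c=d\}$ of the action; this exists because $H$, $X$ and the action $*$ are all type definable over $\emptyset$. The crucial point is that the partial type $\Theta(z,x_1,x')\cup\{\varphi(x_1,y)\}\cup\Theta(z,x_2,x')\cup\{\neg\varphi(x_2,y)\}$, where $x_1,x_2$ are two fresh copies of the first $\alpha$ variables, is \emph{inconsistent}: a realization $(g,c_1,c_2,x',a)$ would satisfy $g*c_1=x'=g*c_2$ with $g\in H$, so applying $c\mapsto g^{-1}*c$ gives $c_1=c_2$, contradicting $\models\varphi(c_1,a)\wedge\neg\varphi(c_2,a)$. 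By compactness there is a single formula $\Theta^0(z,x,x')$, a conjunction of finitely many members of $\Theta$ (so $\Theta\vdash\Theta^0$), such that $\Theta^0(z,x_1,x')\wedge\varphi(x_1,y)\wedge\Theta^0(z,x_2,x')\wedge\neg\varphi(x_2,y)$ is already inconsistent. I then set
\[
\psi(x',y,z)\ :=\ \exists x_1\,(\Theta^0(z,x_1,x')\wedge\varphi(x_1,y)).
\]
Since $\Theta^0$ and $\varphi$ are honest formulas, the bound tuple $x_1$ occurs in only finitely many coordinates, so $\psi$ is genuinely first order, and after relabelling variables one can take $x'$ among the first $\alpha$ variables, as required.

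Finally I would verify the two implications above. If $x'\in g*(\varphi(\C^\alpha,a)\cap X)$, choose $c\in X$ with $g*c=x'$ and $\models\varphi(c,a)$; then $(g,c,x')$ realizes $\Theta$, hence $\Theta^0$, so $x_1=c$ witnesses $\models\psi(x',a,g)$. If instead $x'\in g*(\neg\varphi(\C^\alpha,a)\cap X)$, choose $c_2\in X$ with $g*c_2=x'$ and $\models\neg\varphi(c_2,a)$; were we also to have $\models\psi(x',a,g)$, a witness $c_1$ together with $c_2$ would realize the inconsistent formula displayed above, a contradiction, so $\models\neg\psi(x',a,g)$. This establishes the lemma. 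I do not expect a genuine obstacle here: the only step requiring care is the passage through compactness, which is made to work precisely by introducing the two independent copies $x_1,x_2$ of the $x$-variable and exploiting injectivity of the action on $X$; everything else is routine bookkeeping with variable indices, harmless because $\varphi$ and the finite approximation $\Theta^0$ mention only finitely many variables.
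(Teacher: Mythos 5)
Your proof is correct and is exactly the ``standard compactness argument'' that the paper alludes to without spelling out: the paper only notes the easy definable case ($\psi(x,y,z)=\varphi(z^{-1}\cdot x,y)$ when $\alpha,\beta$ are finite) and then says the general case follows by compactness, which is precisely the argument you have carried out, using the inconsistency of the two-witness type to extract the single formula $\Theta^0$ and the injectivity of $g*{-}$ on $X$ (and type-definability of $H$, $X$, and $*$, as the paper flags) to close the dichotomy.
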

\begin{proof}
If $\alpha$, and $\beta$ were finite, so that $*$ and $\cdot$
were definable, then we could just define $\psi\left(x,y,z\right)=\varphi\left(z^{-1}\cdot x,y\right)$
(so $x=x'$). Otherwise, it is a standard compactness argument. Note
that we need that both $X$ and $H$ are closed. 
\end{proof}
Lemma \ref{lem:formula multiplication-1} defines an action of $H$
on sets of the form $X\cap\varphi\left(\C^{\alpha}\right)$. In order
to ease notation, we will write $g*\varphi$ instead of $g*\left(\varphi\left(\C^{\alpha}\right)\cap X\right)$.
This induces a natural action of $H$ on the set of types in $X$.
If $\dcl\left(A\right)=A$, then $H\cap A$ (and also $G\cap A$)
is a subgroup of $H$, and so it acts naturally by homeomorphisms
on $S_{X}\left(A\right)$ (with the usual Stone topology). In that
case, for any $g\in H\cap A$, $c\models p$ iff $g*c\models g*p$. 

Fix an $E_{G}^{X}$-class $C\subseteq X$. Similarly to Definition
\ref{def:proper}, we define $C$-generic and $C$-weakly generic
formulas and $C$-proper types, replacing the action of an automorphism
group $\Gamma$ by the action of $G$ on $L_{\alpha}\left(\C\right)$
(note: $G$ and not $H$). We omit the details, since it is exactly
as above. 

For $n<\omega$, let $p_{n}\subseteq L_{\beta}\left(\emptyset\right)$
be the partial types defining $V_{n}$ and let $q_{n}\left(x,a\right)$
be the partial type saying $x\in X\land\exists g\in V_{n}\left(g*a=x\right)$.
\begin{lem}
\label{lem:proper types exist group}Suppose $a\in X$ is shifty.
Then, for some $n<\omega$, $q_{n}\left(x,a\right)$ is a $G*a$-proper
type. \end{lem}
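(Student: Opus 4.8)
The plan is to mirror the proof of Proposition~\ref{prop:proper exists}, transporting the automorphism-group argument there to the setting of the $G$-action on formulas, and then use shiftiness to convert statements about a single fixed base point into statements about the whole orbit. First I would observe, exactly as in the first paragraph of the proof of Proposition~\ref{prop:proper exists}, that $G$-properness of $q_n(x,a)$ is automatically inherited along the orbit: if $b = g * a$ for $g \in G$ and $\psi(x,a)$ witnesses properness of $q_n(x,a)$, then $\psi(x, g*a) = g*\psi(x,a)$ witnesses properness of $q_n(x, g*a)$, using Lemma~\ref{lem:formula multiplication-1}. So it suffices to produce a single good $n$ and a single $a$; here shiftiness of $a$ is the hypothesis, so we work with that $a$.

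Next, suppose toward a contradiction that for no $n<\omega$ is $q_n(x,a)$ $G*a$-proper. Arguing as in Proposition~\ref{prop:proper exists}, I would first note that if $\psi(x,a)$ is not $G*a$-generic, then (by translating $\psi$ around by elements of $G$, picking points one at a time, and then applying Ramsey plus compactness — Fact~\ref{fac:indiscernibles exist} — using that $X$ is type definable) there is an indiscernible sequence $\sequence{b_i}{i<\omega}$ in $G*a$ with $\neg\psi(b_i,b_j)$ for $j<i$. Then, from the assumed failure of properness at every level, I would inductively build formulas $\varphi_m(x,a) \in q_m(x,a)$ with $\bigvee_{i<m}\varphi_i$ non-generic, and hence an indiscernible sequence $\sequence{b_i}{i<\omega}$ in $G*a$ with $\neg\varphi_m(b_i,b_j)$ for all $m$ and all $j<i$. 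Writing $b_i = g_i * a$ with $g_i \in G$, this says $(g_i*a, g_j*a) \notin U_m$ for all $m$ — i.e.\ $\neg E_G^X(b_i,b_j)$ — for all $i \neq j$ in $\omega$.

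The last step is to derive a contradiction with boundedness of $E_G^X$ on $X$, which is where shiftiness enters and is the main obstacle. In Proposition~\ref{prop:proper exists} one stretches the indiscernible sequence to arbitrary length and contradicts boundedness directly, because the relevant relation was $E$ itself. Here $E_G^X$ is defined via the group $G$, and the $b_i$ live in the single orbit $G*a$; the subtlety is that $\neg E_G^X(b_i,b_j)$ for all $i\neq j$ would mean the orbit $G*a$, hence the $E_G^X$-class $C$, contains an unbounded antichain — but a priori there is no problem with this, since $C$ need not be bounded \emph{as a class} in the sense relevant here. This is exactly what shiftiness fixes: it lets me translate ``$(g_i*a, g_j*a) \notin U_m$'' into ``$((g_i \cdot g_j^{-1})*a, a) \notin U_{n}$'' (right shifty) or the analogous left version, so the elements $(g_i \cdot g_j^{-1})*a$ for $i<j$ form an unbounded set of points each at ``infinite $U$-distance'' from $a$, which after stretching the sequence by compactness contradicts the fact that the $E_G^X$-class of $a$ is bounded (boundedness of $E_G^X$ on $X$). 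I would carry out the shiftiness conversion carefully for whichever of the two cases holds, noting that in the left case one uses the second clause of Definition~\ref{def:normal} to pass back and forth, and in both cases one uses the remark that $n_k$ may be taken $\geq k$ so that the bounds line up. I expect the bookkeeping in matching the indices $m$, $k$, and $n_k$ across the shiftiness inequality to be the only genuinely fiddly part; the overall skeleton is a direct adaptation of Proposition~\ref{prop:proper exists}.
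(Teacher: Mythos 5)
Your proposal contains a genuine gap, and in fact a couple of related errors, centered on how the $G$-action on formulas behaves.

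First, in the opening paragraph you write that $\psi(x,g*a)=g*\psi(x,a)$. This is false. By Lemma~\ref{lem:formula multiplication-1}, $g*\psi(x,a)$ is a formula $\psi'(x,a,g)$ over $a$ \emph{and} $g$, not the formula $\psi(x,y)$ with $g*a$ substituted for $y$. (Concretely, $h*q_n(\C^\alpha,a)$ corresponds to $hV_n*a$, while $q_n(\C^\alpha,h*a)$ corresponds to $V_nh*a$; these differ unless $h$ commutes with $V_n$.) This matters not for the throwaway remark about moving along the orbit but for the core of the construction: when you translate a non-$G*a$-generic formula by $g_j\in G$ and then pick $g_{n+1}*a$ avoiding the translates, what you learn is $\neg\psi\bigl((g_j^{-1}\cdot g_{n+1})*a,\,a\bigr)$, \emph{not} $\neg\psi(g_{n+1}*a,\,g_j*a)$. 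In other words, the construction naturally yields information about \emph{differences} measured against $a$, not about pairwise distances. Your second paragraph silently assumes the latter, which is exactly what fails.

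Second, and consequently, your use of shiftiness goes in the wrong direction and is placed at the wrong point in the argument. Right shiftiness (contrapositively) says: if $\bigl((g_1\cdot g_2^{-1})*a,\,a\bigr)\notin U_n$ then $(g_1*a,g_2*a)\notin U_k$. It is needed precisely to convert the ``difference far from $a$'' data that the translation construction produces into pairwise $U_k$-inequivalence of the $g_i*a$'s. The paper packages this (together with the genericity bookkeeping, since one only has the failure of a \emph{single} formula from $q_{n_k}(x,a)$, not of the whole type) into a compactness Claim that matches specific formulas on both sides; something like that Claim is unavoidable and is missing from your plan. You instead propose translating pairwise failure into difference failure, which is the reverse implication and does not help. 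Relatedly, a sequence $\sequence{b_i}{i<\omega}$ in $G*a$ with $\neg E_G^X(b_i,b_j)$ for $i\neq j$, as you describe, is internally impossible: elements of a single $G$-orbit are by definition $E_G^X$-equivalent. What the finitary construction really gives, for each $n$, is pairwise $\notin U_k$ only for $k<n$; one then applies compactness, which moves the resulting sequence out of $G*a$ into the type-definable set $X$ (where $E_G^X$ need not be all of $X^2$), and only then does stretching the indiscernible sequence contradict boundedness of $E_G^X$. Your write-up does not register this exit from the orbit.
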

\begin{proof}
The proof uses the same basic idea as in Lemma \ref{prop:proper exists},
but one has to be a bit careful.

Assume first that $a$ is right shifty. Suppose $\pi_{*}$ is the
partial type defining $*$ and that $\pi_{X}$ is the type defining
$X$. We may assume that these types, as well as $p_{n}$ and $q_{n}$
are closed under conjunctions. First we need to establish the following:
\begin{claim*}
For each $k<\omega$ there is some $n<\omega$ such that for all formulas
$\varphi\in p_{n}$, $\theta\in\pi_{*}$ there are formulas $\psi\in p_{k}$
and $\theta'\in\pi_{*}$ such that for every $g_{1},g_{2}\in H$,
if
\[
\exists z\left(\psi\left(z\right)\land\theta'\left(z,g_{1}*a,g_{2}*a\right)\right)
\]
then 
\[
\exists z\left(\varphi\left(z\right)\land\theta\left(z,a,\left(g_{1}\cdot g_{2}^{-1}\right)*a\right)\right).
\]
\end{claim*}
\begin{proof}
[Proof of claim.] Let $k<\omega$ be given, and let $n<\omega$ be
the corresponding number from Definition \ref{def:normal}. Then the
following is inconsistent: there are $g_{1},g_{2}\in H$ such that
$g_{2}*a\in V_{k}*\left(g_{1}*a\right)$ but $\left(g_{1}\cdot g_{2}^{-1}\right)*a\notin V_{n}*a$.
Applying compactness, we are done. Note that these formulas may depend
on $a$ (but not on $g_{1},g_{2}$). 
\end{proof}
Assume that for all $n<\omega$, $q_{n}$ is not proper. For each
$k<\omega$, let $n_{k}<\omega$ be the corresponding number from
the claim. 

Since $q_{n_{k}}$ is not proper for all $k<\omega$, we can find
formulas $\varphi_{k}\in p_{n_{k}}$ and $\theta_{k}\in\pi_{*}$ such
that $\bigvee_{k<m}\psi_{k}'$ is not generic for all $m<\omega$
where $\psi_{k}'\left(x\right)=\exists y\left(\varphi_{k}\left(y\right)\land\theta_{k}\left(y,a,x\right)\right)$.
(note: a formula in $q_{n_{k}}$ generally looks like $\psi'_{k}\wedge\tau$
for $\tau\in\pi_{X}$, but this does not matter for genericity.) 

For each $k<\omega$, the claim provides formulas $\psi_{k}\in p_{k}$
and $\theta'_{k}\in\pi_{*}$ such that:

If $g_{1},g_{2}\in H$ and $g_{2}^{-1}*a\notin g_{1}^{-1}*\psi_{k}'$
then $\neg\exists z\left(\psi_{k}\left(z\right)\land\theta_{k}'\left(z,g_{1}*a,g_{2}*a\right)\right)$.
Note that this latter condition implies that $\left(g_{1}*a,g_{2}*a\right)\notin U_{k}$. 

Fix some $n<\omega$ and let $\psi'=\bigvee_{k<n}\psi_{k}'$. Since
$\psi'$ is not generic, there is a sequence $\sequence{g_{i}\in G}{i<\omega}$
such that $g_{i}^{-1}*a\notin g_{j}^{-1}*\psi'$ for $j<i$. This
means that $\left(g_{j}*a,g_{i}*a\right)\notin U_{k}$ for $k<n$,
and for each $k$, this is because of $\psi_{k}$ and $\theta_{k}'$.
Note that although $V_{k}\subseteq V_{k+1}$, we do not get that $\psi_{k}$
implies $\psi_{k+1}$, so we really need to keep all the formulas. 

Now, by compactness we can find a sequence $\sequence{a_{i}\in X}{i<\omega}$
such that for all $j<i<\omega$, $\left(a_{j},a_{i}\right)\notin U_{k}$
for all $k<\omega$ (and each time because of the same formulas).
By Ramsey and compactness (Fact \ref{fac:indiscernibles exist}) we
may assume that this sequence is indiscernible. But this is a contradiction
to our assumption that the action is bounded. 

If $a$ is left shifty, the proof is exactly the same, replacing $g_{i}^{-1}$
by $g_{i}$.
\end{proof}
Recall that if $E_{G}^{X}\upharpoonright G*a$ is not strongly closed
for some $a\in X$ (see Definition \ref{def:strongly closed}), then
for all $n<\omega$ there are $g_{1},g_{2}\in G$ such that $\left(g_{1}*a,g_{2}*a\right)\notin U_{n+1}$.
But then either $\left(a,g_{1}*a\right)\notin U_{n}$ or $\left(a,g_{2}*a\right)\notin U_{n}$.
So we may always assume that $g_{1}=e$. 
\begin{thm}
\label{thm: countable groups} Suppose $T$ is a complete countable
first-order theory, $\alpha,\beta$ countable ordinals. Suppose that
$\left(\alpha,\beta,G,H,\sequence{V_{n},U_{n}}{n<\omega},\cdot,X,*\right)$
is a bounded $F_{\sigma}$ action and suppose $Y\subseteq\C^{\alpha}$
is a pseudo $G_{\delta}$ set contained in $X$ which is $E_{G}^{X}$
invariant. If for some \uline{shifty} $a\in Y$, $E_{G}^{X}\upharpoonright G*a$
is not strongly closed, then $E_{G}^{X}\upharpoonright Y$ is non-smooth. \end{thm}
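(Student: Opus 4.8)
The plan is to derive Theorem \ref{thm: countable groups} from Theorem \ref{thm:embedding Choquet2} (or, since everything is countable, from Theorem \ref{thm:embedding2}), by a reduction that runs parallel to the proof of Theorem \ref{thm:countable, first case}, but with the automorphism group $\Gamma^{*}$ replaced by a countable subgroup of the model-theoretic group $H$. So the first step is to set up the data. Fix a countable set $A$ over which $X$, $H$, $G$, all the $V_{n}$, $U_{n}$, and $*$ are type definable, with $\dcl(A)=A$, and fix the shifty $a\in Y$ with $E_{G}^{X}\upharpoonright G*a$ not strongly closed; by the remark just before the statement we may take the ``witnessing'' group element on one side to be $e$, i.e.\ for every $n$ there is $g\in G$ with $(a,g*a)\notin U_{n}$. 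Enlarge $A$ to a countable model $M$ with $a\in M$, and choose a countable subgroup $\Gamma^{*}\leq G\cap M$ such that (i) $\Gamma^{*}$ is closed under the operations witnessing $C$-genericity of formulas over $a$ (mimicking clauses (1)--(3) before Theorem \ref{thm:countable, first case}, but now with $G$ acting on $L_{\alpha}(\C)$), and (ii) for every $n$ there is $g\in\Gamma^{*}$ with $(a,g*a)\notin U_{n}$ (mimicking clause (4)). This is possible by the usual countable-closure argument.

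Next I would specify what gets fed into Theorem \ref{thm:embedding2}: take the ambient space to be $Y_{M}$ (Polish, since $Y$ is pseudo $G_{\delta}$ and $\equiv_{L}$-invariant — here one needs $Y$ to be $E_{G}^{X}$-invariant hence $\equiv_{L}^{\alpha}$-invariant, as $E_{G}^{X}$ is bounded so refined by $\equiv_{L}^{\alpha}$; use Remark \ref{rem:G_delta}); take $\Gamma$ there to be $\Gamma^{*}$ acting on $Y_{M}$ by homeomorphisms via $g*p$; take $Y$ there to be $C_{M}$ where $C=G*a$; take $R_{n}$ there to be $U_{n}^{M}\upharpoonright Y_{M}$; and take $K$ there to be $[q_{k}(x,a)]$ for the $k$ supplied by Lemma \ref{lem:proper types exist group}. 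That $K$ is $I_{\calU}$-positive is exactly the content of a group-action analogue of Lemma \ref{lem:positive proper}: a formula $\varphi$ over $M$ is $C$-generic iff $[\varphi]\cap Y_{M}\notin\calU$, and $q_{k}(x,a)$ being $C$-proper then unwinds to $K$ being $I_{\calU}$-positive. Then I would verify conditions (a), (b), (c) of Theorem \ref{thm:embedding2}. Condition (a) (that $\neg p\,(R_{n})^{2}\,g*p$ for a suitable $g\in\Gamma^{*}$) follows from clause (ii) above together with the triangle-inequality bookkeeping in the normal form, exactly as in the proof of Theorem \ref{thm:countable, first case}: if $p\in[q_{k}(x,a)]$ and $q\,R_{n}^{(2)}\,p$ then $(a,b)\in U_{N}$ for $b\models q$ and $N$ bounded in terms of $n,k$, so taking $g$ with $(a,g*a)$ outside $U_{N+\text{const}}$ works. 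Condition (b) is where shiftiness is used: for $g\in\Gamma^{*}$ we need a single $n$ with $(g*p,p)\in U_{n}^{M}$ for all $p\in K$; since $g=g_{1}$ and we may write any $b\models p$ as $h*a$ with $h\in V_{k}$, we have $g*b=(g\cdot h)*a$ and $(g*b,b)=((g\cdot h)*a,h*a)$, and shiftiness (in the form of Definition \ref{def:normal}, together with $(a,g*a)\in U_{m}$ for some fixed $m$ since $g\in G$) gives $((g\cdot h)*a,h*a)\in U_{n}$ for an $n$ depending only on $k,m$. Condition (c) ($\Gamma^{*}$ preserves each $R_{n}$) follows because $\Gamma^{*}\leq G$ and each $U_{n}$ is defined by $\exists g\in V_{n}\,(g*x=y)$, which is $G$-invariant on the left.

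There is one extra wrinkle, exactly as in Theorem \ref{thm:countable case, version 2}: $\Gamma^{*}\leq G$ acting on $X$ via $*$ need not map $E_{G}^{X}$-classes to themselves set-theoretically (it maps $G*a$ into $G*a$, but the point is whether $E_{\Gamma^{*}}^{Y_{M}}\subseteq E_{G}^{X}\restriction Y_{M}$ globally). But $E_{\Gamma^{*}}^{Y_{M}}\restriction\closure{\Gamma^{*}\cdot p}\subseteq E_{G}^{X,M}\restriction Y_{M}$ does hold, because $g*b$ and $b$ are always in the same $G$-orbit when $g\in G$, and this is a closed condition so it survives passage to the closure; and the embedding $\phi$ produced by Theorem \ref{thm:embedding2} lands inside $\closure{\Gamma^{*}\cdot x}$ for some $x\in K$. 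Hence $\EZero\sqsubseteq_{c}E_{\Gamma^{*}}^{Y_{M}}\restriction\closure{\Gamma^{*}\cdot x}\subseteq E_{G}^{X,M}\restriction Y_{M}$, which by Proposition \ref{prop:G_delta changing the model-1} and the Harrington--Kechris--Louveau dichotomy (Fact \ref{fac:Harrington-Kechris-Louveau-dich}) says $E_{G}^{X}\restriction Y$ is non-smooth.

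\emph{The main obstacle} I anticipate is verifying condition (b) of Theorem \ref{thm:embedding2} uniformly over $p\in K$: one must convert ``$(g_{1}*a,g_{2}*a)\in U_{k}$'' statements into ``$(a,(g_{1}\cdot g_{2}^{-1})*a)\in U_{n}$'' statements (or the left-shifty variant), and this is precisely the combinatorics that shiftiness is designed to permit — getting the quantifier ``there is one $n$ that works for all representatives $b\models p$ and all $p\in[q_{k}(x,a)]$'' requires pushing the shiftiness inequality through the type-definability of $*$ via compactness, much as in the Claim inside the proof of Lemma \ref{lem:proper types exist group}. Everything else is a faithful, if bookkeeping-heavy, transcription of the proof of Theorem \ref{thm:countable, first case}.
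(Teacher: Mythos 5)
Your overall skeleton is right — countable model $M$, $\Gamma = G\cap M$ (or a countable subgroup), $K=[q_k(x,a)]$ with $k$ from Lemma \ref{lem:proper types exist group}, $R_n = U_n^M\upharpoonright Y_M$, then invoke an embedding theorem from Section \ref{sec:A-sufficient-condition} — but you have mislocated where shiftiness enters, and the embedding theorem you chose does not apply.

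First, condition (c) of Theorem \ref{thm:embedding2} fails in this setting. You assert that ``$\Gamma^*$ preserves each $R_n$'' because $U_n$ ``is $G$-invariant on the left''; but if $(b,c)\in U_n$ means $c = h*b$ for some $h\in V_n$, then for $g\in G$ we have $g*c = (g\cdot h\cdot g^{-1})*(g*b)$, and to conclude $(g*b,g*c)\in U_n$ one would need $gV_ng^{-1}\subseteq V_n$. Nothing in the hypotheses gives this. (Compare the automorphism case, where $U_n$ is $\emptyset$-invariant, so $\sigma\in\Aut(\C)$ really does preserve it.) Because (c) is unavailable, the paper invokes Theorem \ref{thm:embedding1}, which has no analogue of (c) but demands $R_n^{(4)}$ rather than $R_n^{(2)}$ in clause (a).

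Second, you place shiftiness inside the verification of condition (b). In fact (b) is trivial here: any $g\in\Gamma^*\subseteq G=\bigcup V_n$ lies in some $V_n$, and then $(g*b,b)\in U_n$ for every $b$ whatsoever, so $p\mathrel{R_n}g*p$ for all $p$. The bookkeeping you attribute to shiftiness (writing $b=h*a$ and controlling $((g\cdot h)*a,h*a)$) is not needed for (b). Shiftiness is what rescues condition (a): from $(b,g*b)\in U_m$ with $b\in G*a$ you must deduce $(a,g*a)\in U_N$ with $N$ controlled by $m,k$ — precisely a statement of the form ``$(g_1*a,g_2*a)\in U_m \Rightarrow ((g_1g_2^{-1})*a,a)\in U_N$'' — and without it the $\Gamma^*$-translates you produce might become $R_n$-close even though $(a,g*a)\notin U_N$. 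Your sketch for (a) instead appeals to the automorphism-case triangle argument (``$(\sigma(a),\sigma(b))\in U_k$''), which is exactly the step that is unavailable when $\Gamma^*$ acts by $*$ rather than by $\emptyset$-automorphisms.

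Finally, a minor point: the ``extra wrinkle'' about restricting to $\closure{\Gamma^*\cdot x}$ is superfluous here. Unlike the automorphism case of Theorem \ref{thm:countable case, version 2}, any $g\in G$ sends every element of $X$ into the same $G$-orbit, so $E_{\Gamma^*}^{Y_M}\subseteq E_G^{X,M}\upharpoonright Y_M$ holds globally, not just on $\closure{\Gamma^*\cdot x}$.
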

\begin{proof}
This follows from Theorem \ref{thm:embedding1}, just like the proof
of Theorem \ref{thm:countable, first case}. By Lemma \ref{lem:proper types exist group},
for some $k<\omega$, $q_{k}\left(x,a\right)$ is $G*a$-proper, and
this is witnessed by some non-generic formula $\psi$. Construct recursively
a countable model $M$ such that:
\begin{enumerate}
\item $a\in M$ and $\psi$ is over $M$.
\item If $\varphi\in L_{\alpha}\left(M\right)$ is $G*a$-generic, then
for some $\Delta\subseteq G\cap M$, $\Delta*\varphi$ contains $G*a$.
\item For all $n<\omega$, there is some $g\in G\cap M$ such that $\left(a,g*a\right)\notin U_{n}$. 
\end{enumerate}
In the language of Theorem \ref{thm:embedding1}, $X$ is $Y_{M}$,
$\Gamma$ is $G\cap M$, $Y$ is $\left(G*a\right)_{M}$, $R_{n}$
is $U_{n}^{M}\cap Y_{M}^{2}$ and $K$ is the compact set $\left[q_{k}\left(x,a\right)\right]$.
The fact that $\left[q_{k}\left(x,a\right)\right]$ is $I_{\Uu}$-positive
follows from (2) and the fact that $q_{k}\left(x,a\right)$ is proper
(see the proof of Lemma \ref{lem:positive proper}).

Condition (a) follows from (3) above: if $\left(a,g*a\right)\notin U_{N}$
for $N$ big enough, then for all $p\in Y_{M}$ containing $q_{k}\left(x,a\right)$,
$\left(g*p,p\right)\notin U_{n}^{M,\left(4\right)}$. We illustrate
this: if $\left(p,g*p\right)\in U_{n}^{M,\left(2\right)}$, then for
some $q\in X_{M}$,$\left(p,q\right)\in U_{n}^{M}$ and $\left(q,g*p\right)\in U_{n}^{M}$.
So for some $b_{1},b_{2}\models p$ and $c_{1},c_{2}\models q$, $\left(b_{1},c_{1}\right)\in U_{n}$,
$\left(c_{2},g*b_{2}\right)\in U_{n}$. Since $\left(b_{1},a\right)\in U_{k}$,
and $b_{1}\equiv_{a}b_{2}$, $\left(b_{1},b_{2}\right)\in U_{k+1}$.
Similarly, $\left(c_{1},c_{2}\right)\in U_{\max\left\{ n,k\right\} +2}$.
It follows that $\left(b_{2},g*b_{2}\right)\in U_{\max\left\{ k,n\right\} +4}$.
Suppose $a$ is right shifty. As $b_{2}\in G*a$ and since $a$ is
right shifty, we get that $\left(a,g*a\right)\in U_{N}$ for some
$N$. If $a$ is left shifty, then as $\left(a,b_{2}\right)\in U_{k}$,
$\left(g*a,g*b_{2}\right)\in U_{n_{k}}$ for some large $n_{k}$,
so $\left(a,g*a\right)\in U_{n_{k}+2}$.

Condition (b) is trivial, since any $g\in G\cap M$ belongs to some
$V_{n}$. \end{proof}
\begin{problem}
Is shiftiness of $a$ necessary? \end{problem}
\begin{cor}
\label{cor:closed or non-smooth}With the same assumptions of Theorem
\ref{thm: countable groups}, if the action of $G$ is free (if $g*x=h*x$
then $h=g$) then either $G=V_{n}$ for some $n<\omega$, in which
case $E_{G}^{X}$ is strongly closed so smooth or $E_{G}^{X}\upharpoonright Y$
is non-smooth. \end{cor}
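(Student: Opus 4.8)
The plan is to deduce Corollary~\ref{cor:closed or non-smooth} from Theorem~\ref{thm: countable groups} together with the freeness of the action. First I would observe that when the action of $G$ on $X$ is free, every $a\in X$ is right shifty: given $k<\omega$, if $g_{1},g_{2}\in H$ and $\left(g_{1}*a,g_{2}*a\right)\in U_{k}$, then $g_{2}*a=h*\left(g_{1}*a\right)$ for some $h\in V_{k}$, so $\left(g_{2}^{-1}\cdot h\cdot g_{1}\right)*a=a$; by freeness this forces $g_{2}^{-1}\cdot h\cdot g_{1}=e$ in $H$, hence $\left(g_{1}\cdot g_{2}^{-1}\right)*a=h^{-1}*a$ and, since $V_{k}=V_{k}^{-1}$, $\left(\left(g_{1}\cdot g_{2}^{-1}\right)*a,a\right)\in U_{k}$. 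So $n_{k}=k$ works and shiftiness is automatic. This removes the only non-routine hypothesis needed to apply Theorem~\ref{thm: countable groups}.

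Next I would dispose of the dichotomy itself. Either $G=V_{n}$ for some $n<\omega$, or not. In the first case $E_{G}^{X}=U_{n}$ is a closed equivalence relation (recall $U_{n}$ is $\emptyset$-type definable, hence its trace on $S_{\alpha}(M)$ is closed), so by Fact~\ref{fac:Closed-equivalence-relations} $E_{G}^{X}\upharpoonright Y$ is smooth; moreover $E_{G}^{X}$ is strongly closed on every class. In the second case, $G\neq V_{n}$ for all $n$, so for each $n$ there are $g_{1},g_{2}\in G$ with $\left(g_{1}*a_{0},g_{2}*a_{0}\right)\notin U_{n+1}$ for \emph{any} fixed $a_{0}\in X$ on which the action is free --- wait, more carefully: pick any $a\in Y$; since $G\neq V_{n}$ for all $n$ and the action is free, for each $n$ there is $g\in G$ with $g\notin V_{n}$, whence $\left(a,g*a\right)\notin U_{n}$ by freeness (if $\left(a,g*a\right)\in U_{n}$ then $g*a=h*a$ for $h\in V_{n}$, so $g=h\in V_{n}$). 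Thus $E_{G}^{X}\upharpoonright G*a$ is not strongly closed (for no $n$ is $E_{G}^{X}\upharpoonright G*a=U_{n}\upharpoonright G*a$), so all the hypotheses of Theorem~\ref{thm: countable groups} are met and $E_{G}^{X}\upharpoonright Y$ is non-smooth.

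The only point requiring a little care is the reduction ``$G\neq V_{n}$ for all $n$ $\Rightarrow$ $E_{G}^{X}\upharpoonright G*a$ not strongly closed,'' which as noted uses freeness to pull an inequality of group elements back from an inequality at the orbit level; without freeness one only knows that some translate of $a$ is far from $a$, which is exactly the weaker hypothesis already allowed in Theorem~\ref{thm: countable groups}. So I do not expect any serious obstacle here --- the corollary is essentially a packaging of Theorem~\ref{thm: countable groups} plus the elementary freeness observations above, and the main (mild) subtlety is just to make the translation between ``$G$ generated in finitely many steps'' and ``strongly closed'' precise in the free setting.

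\begin{proof}
If $G=V_{n}$ for some $n<\omega$, then $E_{G}^{X}=U_{n}$, which is type definable over $\emptyset$, hence closed, so $E_{G}^{X}\upharpoonright Y$ is smooth by Fact~\ref{fac:Closed-equivalence-relations}, and $E_{G}^{X}$ is strongly closed on every class.

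Otherwise $G\neq V_{n}$ for all $n<\omega$. Fix any $a\in Y$ (if $Y=\emptyset$ there is nothing to prove). First, $a$ is right shifty: given $k<\omega$, set $n_{k}=k$; if $g_{1},g_{2}\in H$ and $\left(g_{1}*a,g_{2}*a\right)\in U_{k}$, choose $h\in V_{k}$ with $g_{2}*a=h*(g_{1}*a)$, so $\left(g_{2}^{-1}\cdot h\cdot g_{1}\right)*a=a$; by freeness $g_{2}^{-1}\cdot h\cdot g_{1}=e$, hence $\left(g_{1}\cdot g_{2}^{-1}\right)*a=h^{-1}*a$, and since $V_{k}=V_{k}^{-1}$ this gives $\left(\left(g_{1}\cdot g_{2}^{-1}\right)*a,a\right)\in U_{k}=U_{n_{k}}$.

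Next, $E_{G}^{X}\upharpoonright G*a$ is not strongly closed: fix $n<\omega$; since $G\neq V_{n}$, pick $g\in G\setminus V_{n}$. Then $\left(a,g*a\right)\notin U_{n}$, for otherwise $g*a=h*a$ for some $h\in V_{n}$, and freeness would give $g=h\in V_{n}$. Thus $g*a\in G*a$ witnesses that $E_{G}^{X}\upharpoonright G*a\neq U_{n}\upharpoonright G*a$, and as $n$ was arbitrary, $E_{G}^{X}\upharpoonright G*a$ is not strongly closed.

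All hypotheses of Theorem~\ref{thm: countable groups} are now satisfied: $T$, $\alpha$, $\beta$ are countable, $\left(\alpha,\beta,G,H,\sequence{V_{n},U_{n}}{n<\omega},\cdot,X,*\right)$ is a bounded $F_{\sigma}$ action, $Y\subseteq X$ is pseudo $G_{\delta}$ and $E_{G}^{X}$-invariant, and $a\in Y$ is shifty with $E_{G}^{X}\upharpoonright G*a$ not strongly closed. Hence $E_{G}^{X}\upharpoonright Y$ is non-smooth.
\end{proof}
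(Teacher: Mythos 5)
Your proof is correct and takes essentially the same route as the paper: the paper notes that freeness gives trivial (hence normal) stabilizers, so every point is right shifty by the earlier Example, then argues contrapositively that smoothness forces $E_{G}^{X}\upharpoonright G*a$ to be strongly closed for each $a\in Y$ and hence $G=V_{n}$ by freeness. You unwind the shiftiness check explicitly rather than citing the stabilizer Example and argue in the forward direction (not $V_{n}$-generated $\Rightarrow$ not strongly closed $\Rightarrow$ non-smooth), but this is just the contrapositive of the same deduction; the key freeness observation—that $g*a=h*a$ forces $g=h$, so orbit-level distance and group-level membership in $V_{n}$ coincide—is identical in both.
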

\begin{proof}
Note that by assumption, every $a\in X$ is right shifty (since $\stab_{H}\left(a\right)=e$
is a normal subgroup). Now, if $E_{G}^{X}\upharpoonright Y$ is smooth,
then by Theorem \ref{thm: countable groups}, for every $a\in Y$,
$E\upharpoonright G*a$ is strongly closed. So for some $a\in Y$
and $n<\omega$ for all $b\in Y$, $a\mathrela{E_{G}^{X}}b$ iff $\left(a,b\right)\in U_{n}$.
Since the action is free, it follows that then $G=V_{n}$. \end{proof}
\begin{thm}
\label{Thm:groups - general language} Suppose $T$ is a complete
first-order theory, $\alpha,\beta$ small ordinals. Suppose that $ $$\left(\alpha,\beta,G,H,\sequence{V_{n},U_{n}}{n<\omega},\cdot,X,*\right)$
is a bounded $F_{\sigma}$ action and suppose $Y\subseteq\C^{\alpha}$
is a pseudo a strong Choquet set contained in $X$ which is $E_{G}^{X}$
invariant. Suppose also that for some \uline{shifty} $a\in Y$,
$E_{G}^{X}\upharpoonright G*a$ is not strongly closed. Then $\left|Y/E_{G}^{X}\right|\geq2^{\aleph_{0}}$. \end{thm}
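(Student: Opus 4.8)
The plan is to carry the proof of Theorem~\ref{thm: countable groups} over to the uncountable setting, using the strong Choquet embedding theorem, Theorem~\ref{thm:embedding Choquet1}, in place of Theorem~\ref{thm:embedding1}. By Lemma~\ref{lem:proper types exist group} fix $k<\omega$ so that $q_{k}(x,a)$ is $G*a$-proper, witnessed by a non-generic formula $\psi$. By the closing-off construction of the discussion preceding Theorem~\ref{thm:countable, first case} (iterated $\omega$ times), build a small model $M$ with $\dcl(M)=M$ such that $a\in M$, $\psi$ is over $M$, every $G*a$-generic formula in $L_{\alpha}(M)$ is covered by finitely many $(G\cap M)$-translates of itself, and for every $n<\omega$ there is some $g\in G\cap M$ with $(a,g*a)\notin U_{n}$; the last clause is arranged since $E_{G}^{X}\upharpoonright G*a$ is not strongly closed, taking $g_{1}=e$ in the remark preceding Theorem~\ref{thm: countable groups}.

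Set $\Gamma^{*}=G\cap M$; it is a group acting by homeomorphisms on $Y_{M}$, because $Y$ is $G$-invariant (being $E_{G}^{X}$-invariant) and $\equiv_{M}$-invariant (as $X$ is $\emptyset$-type-definable and $E_{G}^{X}$ is bounded on $X$). In the language of Theorem~\ref{thm:embedding Choquet1}, $X$ is $Y_{M}$, which is regular strong Choquet (regular as a subspace of the compact Hausdorff space $S_{\alpha}(M)$, strong Choquet by the pseudo strong Choquet hypothesis), $\Gamma$ is $\Gamma^{*}$, $Y$ is $(G*a)_{M}$, $\calU$ is the family of open $U\subseteq Y_{M}$ with no finite $\Delta\subseteq\Gamma^{*}$ such that $(G*a)_{M}\subseteq\Delta\cdot U$, $R_{n}$ is $U_{n}^{M}\cap Y_{M}^{2}$, and $K$ is $[q_{k}(x,a)]$ (compact, and contained in $Y_{M}$ because $G*a\subseteq Y$). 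Just as in Lemma~\ref{lem:positive proper}, properness of $q_{k}(x,a)$ together with the covering property of $M$ gives that $K$ is $\ideal{\calU}$-positive, and conditions (a) and (b) of Theorem~\ref{thm:embedding Choquet1} hold for the same reasons as in the proof of Theorem~\ref{thm: countable groups}: (b) because every element of $G\cap M$ lies in some $V_{n}$, and (a) from the $g$'s supplied by $M$ via the shiftiness of $a$.

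Applying Theorem~\ref{thm:embedding Choquet1} produces $\phi\from\Cantorspace\to\SS(Y_{M})$ with each $\phi(y)$ a nonempty closed $G_{\delta}$ subset of $Y_{M}$, and with $\phi(y)\times\phi(z)\subseteq\setcomplement{\union[n\in\N][R_{n}]}$ whenever $\neg y\mathrel{\EZero}z$. Choose $p_{y}\in\phi(y)$ for each $y\in\Cantorspace$ and a realization $b_{y}\models p_{y}$ in $\C$; then $b_{y}\in Y$, since $p_{y}\in Y_{M}$ and $Y$ is $\equiv_{M}$-invariant. If $\neg y\mathrel{\EZero}z$, then $(p_{y},p_{z})\notin U_{n}^{M}$ for every $n$, which says that no pair of realizations of $p_{y}$ and $p_{z}$ lies in $U_{n}$; in particular $(b_{y},b_{z})\notin U_{n}$ for all $n$, i.e.\ $\neg b_{y}\mathrel{E_{G}^{X}}b_{z}$. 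Hence $\set{b_{y}}{y\in\Cantorspace}$ meets $2^{\aleph_{0}}$ pairwise distinct $E_{G}^{X}$-classes, all contained in $Y$, and so $\cardinal{Y/E_{G}^{X}}\geq2^{\aleph_{0}}$.

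As in Theorem~\ref{thm: countable groups}, the main obstacle is the verification of condition (a) of Theorem~\ref{thm:embedding Choquet1}: for each $n$ one must exhibit $\gamma\in\Gamma^{*}$ with $\neg x\mathrel{\relationpower{R_{n}}4}\gamma\cdot x$ for all $x\in K$, and this is exactly where shiftiness of $a$ enters, through the computation that propagates $(a,\gamma\cdot a)\notin U_{N}$ --- for $N$ chosen large relative to $n$ and $k$ --- down to all types $x\in[q_{k}(x,a)]$. Passing from the Polish to the regular strong Choquet setting costs nothing beyond noting that $S_{\alpha}(M)$ is compact Hausdorff; and, in contrast with Theorem~\ref{thm:Choquet case, version 2}, no pseudo closedness of $Y$ is needed here, since $\Gamma^{*}\leq G$ maps each $E_{G}^{X}$-class onto itself, so the orbit equivalence relation of $\Gamma^{*}$ on $Y_{M}$ is automatically contained in $E_{M}\upharpoonright Y_{M}$ and the compact ``$\closure{\Gamma\cdot x}$'' variant of the embedding theorem is not required.
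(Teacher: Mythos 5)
Your proof is correct and spells out exactly what the paper compresses into the one-line remark ``Follows similarly from Theorem~\ref{thm:embedding Choquet1}'': you transport the setup of Theorem~\ref{thm: countable groups} (the model $M$, $\Gamma^*=G\cap M$, $Y=(G*a)_M$, $R_n=U_n^M\cap Y_M^2$, $K=[q_k(x,a)]$) to the strong Choquet embedding theorem, verify regularity and $\ideal{\calU}$-positivity, and extract $2^{\aleph_0}$ pairwise non-$E_G^X$-equivalent realizations $b_y$ from the sets $\phi(y)$ using only the negative direction of the homomorphism. Your closing observation — that pseudo strong Choquet (rather than pseudo closed, as in Theorem~\ref{thm:Choquet case, version 2}) suffices because $\Gamma^*\leq G$ automatically preserves $E_G^X$-classes, so the ``moreover'' compactness clause of Theorem~\ref{thm:embedding Choquet1} is never needed — is the right explanation for why this theorem is stated with the weaker hypothesis, and matches the contrast the paper draws in the preceding subsection.
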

\begin{proof}
Follows similarly from Theorem \ref{thm:embedding Choquet1}. 
\end{proof}
We can also recover Newelski's results \cite[Theorem 3.1]{Newelski}
about groups generated by countably many type definable sets over
$\emptyset$.
\begin{cor}
\label{cor:Newelski}Let $T$ be any first order theory and $\alpha$
any small ordinal. Suppose that $\left(H,\cdot\right)$ is a $\emptyset$-type
definable group such that $H\subseteq\C^{\alpha}$. Suppose that $G\leq H$
is a subgroup which is generated by countably many sets $V_{n}$ for
$n<\omega$ which are $\emptyset$-type definable. Suppose that $G\leq H_{0}\leq H$
is a subgroup which is pseudo $G_{\delta}$ or pseudo closed (or even
pseudo strong Choquet). Assume also that $\left[H:G\right]$ is bounded.
Then:
\begin{enumerate}
\item If $G$ is pseudo $G_{\delta}$ or pseudo closed or even pseudo strong
Choquet then $G$ is pseudo closed and in fact generated by finitely
many of the sets $V_{n}$ in finitely many steps.
\item If $G$ is not pseudo closed then $\left[H_{0}:G\right]\geq2^{\aleph_{0}}$. 
\item If $T$ and $\alpha$ are countable then either $G$ is pseudo closed
or the equivalence relation $E_{G}^{H_{0}}$ on $H_{0}$ of being
in the same coset modulo $G$ is not smooth. 
\item If $T$ is small and $\alpha$ is finite then $G$ is pseudo closed. 
\item If we remove the assumption that $\left[H:G\right]$ is bounded, we
still get (1) for pseudo closed $G$.
\end{enumerate}
\end{cor}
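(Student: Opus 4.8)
The plan is to reduce everything to the group action setup already developed, by viewing the coset equivalence relation as an orbit equivalence relation. Let $X = H_0$, let $H$ act on $X = H_0$ by left multiplication (which is an $\emptyset$-type definable action since $H_0$ is pseudo closed or at least a subgroup), and let $*$ be this action restricted to $G$; then $E_G^{H_0}$ is exactly the orbit equivalence relation of $G$ acting on $H_0$, and the tuple $\bar D = (\alpha, \alpha, G, H_0, \sequence{V_n, U_n}{n<\omega}, \cdot, H_0, *)$ is a bounded $F_\sigma$ action whenever $[H_0:G] \le [H:G]$ is bounded. Here $U_n = \set{(a,b) \in H_0 \times H_0}{a^{-1}b \in V_n}$ (after putting $\sequence{V_n}{n<\omega}$ into normal form, which we may do since $G$ is an $\emptyset$-invariant pseudo $F_\sigma$ subgroup). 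The key observation making the machinery applicable: since $H_0$ acts on itself by left translation and $G \trianglelefteq H_0$ need not hold, but every point $a \in H_0$ has $\stab_{H_0}(a) = \{e\}$ (the action is free), so \emph{every} $a \in H_0$ is right shifty by the Example following Definition \ref{def:normal}. Moreover, $E_G^{H_0} \upharpoonright G*a = E_G^{H_0}\upharpoonright aG$ is ``strongly closed'' in the sense of Definition \ref{def:strongly closed} iff $E_G^{H_0}\upharpoonright aG = (aG)^2 \cap U_n$ for some $n$, which by freeness and left-translation-invariance happens iff $G = V_n$ for that $n$, i.e.\ iff $G$ is generated by finitely many $V_n$ in finitely many steps.

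With this dictionary in hand, the individual items follow. For (3): if $T$ and $\alpha$ are countable and $G$ is not pseudo closed, then $G \ne V_n$ for all $n$, so $E_G^{H_0}\upharpoonright G*a$ is not strongly closed for some (indeed every) $a$; since every $a$ is shifty and $Y := H_0$ is pseudo $G_\delta$ (or just pseudo closed) and $E_G^{H_0}$-invariant, Theorem \ref{thm: countable groups} gives that $E_G^{H_0}\upharpoonright H_0$ is non-smooth. For (2) in general (uncountable) language: replace Theorem \ref{thm: countable groups} by Theorem \ref{Thm:groups - general language}, using that $H_0$ pseudo closed (or pseudo $G_\delta$, or pseudo strong Choquet) is pseudo strong Choquet — for pseudo closed this needs the $\equiv_L^\alpha$-invariance hypothesis of the Example after ``pseudo strong Choquet'', which holds because $H_0$ is a subgroup of bounded index hence $\equiv_L^\alpha$-invariant (bounded-index type-definable subgroups contain $(H)_\emptyset^{000}$). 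This yields $[H_0:G] = |H_0/E_G^{H_0}| \ge 2^{\aleph_0}$. Item (1) then follows: if $G$ were pseudo $G_\delta$ / pseudo closed / pseudo strong Choquet but \emph{not} generated by finitely many $V_n$ in finitely many steps, apply (2)/(3) with $H_0 := G$ itself (taking $Y = G$, which is then $E_G^G$-invariant, being a single coset — so the relation is trivial, with one class), contradicting $[G:G] = 1 < 2^{\aleph_0}$; hence $G$ is generated by finitely many $V_n$ in finitely many steps, which in particular makes $G$ pseudo closed.

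For (4): if $T$ is small and $\alpha$ finite, then $S_\alpha(M)$ has only countably many types over a countable $M$; were $G$ not pseudo closed, (3) would give a continuous embedding of $\EZero$ into $E_G^{H_0}$ (using Fact \ref{fac:Harrington-Kechris-Louveau-dich}), whence every $\EZero$-class would be $G_\delta$; but a small theory with finite $\alpha$ forces the $E_G^{G}$-classes in $S_\alpha(M)$ to be countable intersections of clopen-coboundable pieces — more precisely, one argues as in the last Corollary of Subsection \ref{sub:First-application:-invariant} (the analog of \cite[Fact 1.1]{BorelCard}) that a $G_\delta$ class must be closed, contradicting density of $\EZero$-classes — so $G$ is pseudo closed (and by (1) generated by finitely many $V_n$ in finitely many steps). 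Finally (5): inspect the proof of (1) above; the only place boundedness of $[H:G]$ was used was to know $E_G^{H_0}$ (equivalently $E_G^G$) is \emph{bounded} so that Theorem \ref{thm: countable groups} / Theorem \ref{Thm:groups - general language} applies — but when we run the argument with $H_0 = G$, the ambient action is $G$ acting on $G$, whose orbit equivalence relation is trivial (one class), so the boundedness hypothesis on $[H:G]$ is unnecessary for deducing that pseudo-closed $G$ is generated by finitely many $V_n$ in finitely many steps.

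\medskip

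The main obstacle I anticipate is \textbf{not} the reduction itself but verifying that the general-language versions go through with the weaker topological hypothesis: Theorem \ref{thm:Choquet case, version 2} only gives $\ge 2^{\aleph_0}$ classes for $Y$ \emph{pseudo closed} (not merely pseudo strong Choquet), because one cannot control the range of the Choquet embedding to land in a single orbit closure. So for item (2) with $H_0$ only pseudo strong Choquet rather than pseudo closed, one must instead route through the group-action Theorem \ref{Thm:groups - general language}, and check carefully that its proof (via Theorem \ref{thm:embedding Choquet1}) does \emph{not} need the orbit-closure refinement — which it does not, since here $\Gamma = G \cap M$ preserves all $E_G^{H_0}$-cosets (left translations by $G$ preserve right cosets of $G$... in a group this is automatic), so $E_\Gamma^{X} \subseteq E_G^{H_0}\upharpoonright Y_M$ globally. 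Pinning down this containment and the shiftiness-of-every-point claim in the non-free general case is the delicate bookkeeping step.
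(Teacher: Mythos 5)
Your overall strategy — view $E_{G}^{H_{0}}$ as an orbit equivalence relation, check shiftiness via freeness, and invoke Theorems \ref{thm: countable groups} / \ref{Thm:groups - general language} — matches the paper's. But there are two genuine problems.

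First, your setup puts $X = H_{0}$ (so that $H_{0}$ is both the acting type-definable group and the acted-upon set in the $F_{\sigma}$-action tuple). The definition of an $F_{\sigma}$ action requires the ambient group and the set acted upon to be $\emptyset$-type definable, whereas $H_{0}$ is only assumed pseudo closed / pseudo $G_{\delta}$ / pseudo strong Choquet — which is strictly weaker and in general is not even type definable. The paper avoids this by keeping $X = H$ (which \emph{is} $\emptyset$-type definable) and varying the subset $Y$: it applies Theorem \ref{Thm:groups - general language} with $X = H$, $Y = G$, $a = e$ to get (1), and again with $X = H$, $Y = H_{0}$ to get (2). Your version compounds the problem in item (1), where you set $X = H_{0} = G$ while simultaneously only assuming $G$ is pseudo closed etc.\ — exactly the property the hypotheses of the theorem do not give you for $X$. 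This is not just bookkeeping: the whole apparatus (genericity, properness, the Stone space $S_{X}(M)$) leans on $X$ being type definable over $\emptyset$.

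Second, your argument for (4) does not work as stated. You assert that $T$ small and $\alpha$ finite gives countably many types over a countable model $M$; but smallness only gives countably many types over $\emptyset$, not over arbitrary countable $M$. You then route through the Harrington--Kechris--Louveau dichotomy and a claim that $G_{\delta}$ classes must be closed, citing a corollary that was proved in the automorphism-group setting (Subsection \ref{sub:First-application:-invariant}), not the group-action setting, and whose transfer needs justification. The paper's argument for (4) is much shorter and does not use any of this: since $T$ is small and $\alpha$ is finite, $S_{\alpha}(\emptyset)$ is countable, so every subset of it is $G_{\delta}$; since $G$ is $\emptyset$-invariant, $G$ is the set of realizations of the types in $Q = \set{q \in S_{\alpha}(\emptyset)}{q(\C) \subseteq G}$, hence $G$ is pseudo $G_{\delta}$, and then (1) applies. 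You should replace your argument for (4) with this one.

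Items (2), (3), and (5) as you argue them are essentially the paper's argument once the setup is corrected (keep $X = H$, vary $Y$), and your identification of shiftiness via freeness and of ``not strongly closed'' with ``not generated by finitely many $V_{n}$ in finitely many steps'' is exactly right. The paper also cites Corollary \ref{cor:closed or non-smooth} directly for (3), which is slightly cleaner than re-running the argument.
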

\begin{proof}
We may assume that $V_{n}$ is symmetric ($V_{n}=V_{n}^{-1}$), $V_{0}=\left\{ 1_{H}\right\} $,
and $V_{n}^{\cdot2}\subseteq V_{n+1}$. Consider the action of $G$
on $H$ by left multiplication and the orbit equivalence relation
$E_{G}^{H}$ on $H$. Then by Theorem \ref{Thm:groups - general language}
with $X=H,Y=G$, $\alpha=\beta$, $a=e_{G}$ we get (1). Applying
it again with $X=H,Y=H_{0}$ we get (2). (3) follows from Corollary
\ref{cor:closed or non-smooth}. 

(4) Suppose not. Since $T$ is small, the set $S_{\alpha}\left(\emptyset\right)$
is countable. Thus every subset of it is $G_{\delta}$, in particular
the set 
\[
Q=\set{q\in S_{\alpha}\left(\emptyset\right)}{\forall b\models q\,\left(b\in G\right)}.
\]

But then $G$ is pseudo $G_{\delta}$ so it is pseudo closed by (1). 

(5) Note that in that case $G$ is type definable over $\emptyset$,
so we can replace $H$ by $G$. \end{proof}
\begin{cor}
\label{cor:invariant + finite index implies definable}Suppose $H$
is a definable group, and $G$ an $\emptyset$-invariant subgroup
which is a union of countably many type definable sets. Then if $\left[H:G\right]<\infty$
then $G$ is definable. \end{cor}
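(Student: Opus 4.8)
The plan is to deduce this from Corollary~\ref{cor:Newelski} and then to upgrade ``pseudo closed'' to ``definable''. Since $[H:G]<\infty$ the index is in particular bounded, so the orbit equivalence relation $E_G^H$ of the action of $G$ on $H$ by left translation is a bounded $\emptyset$-invariant pseudo $F_\sigma$ equivalence relation. First I would observe that $G$, being $\emptyset$-invariant and a countable union of type definable sets, is a countable union of \emph{parameter-free} type definable sets --- equivalently, is generated by countably many $\emptyset$-type definable sets. This is the usual ``pseudo $F_\sigma$ $\Leftrightarrow$ countable union of $\emptyset$-type definable'' fact for $\emptyset$-invariant sets already used in Subsection~\ref{sub:Context}; concretely one passes to the image of $G$ in $S_\alpha(\emptyset)$ (legitimate since an $\emptyset$-invariant set is a union of complete types over $\emptyset$), notes that each of the given type definable pieces has closed image there (the restriction map from its parameter space down to $S_\alpha(\emptyset)$ is a continuous map of compact Hausdorff spaces), and concludes that the image of $G$ is $F_\sigma$. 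Adding constants for the small set over which $H$ is definable (this does not weaken $\emptyset$-invariance of $G$), we are now exactly in the setting of Corollary~\ref{cor:Newelski}, taking $H_0 = H$ there, which is legitimate because a definable set is pseudo closed. Since $[H:G]<\infty<2^{\aleph_0}$, part (2) of that corollary forces $G$ to be pseudo closed, i.e.\ type definable over a small set.

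It remains to see that a type definable subgroup $G$ of finite index in a definable group $H$ is definable, which I would do by a standard compactness argument. Fix coset representatives $h_1 = e, \dots, h_k$ for $G$ in $H$. Each $h_iG$ is type definable, being the image of $G$ under the definable bijection $x\mapsto h_ix$ of $H$, so $H\setminus G = \bigcup_{i=2}^k h_iG$ is a finite union of type definable sets and hence type definable. Thus, in the compact Hausdorff space of types of elements of $H$, the set corresponding to $G$ is closed and, as the complement of a finite union of closed sets, also open. A relatively clopen subset of a definable set is definable: choosing a formula $\varphi$ in the partial type defining $G$ and a formula $\psi$ in the partial type defining $H\setminus G$ with $H(x)\wedge\varphi(x)\wedge\psi(x)$ inconsistent, one checks $G = \{x\in H : \varphi(x)\}$, so $G$ is definable.

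I do not expect a serious obstacle: essentially all the content is already in Corollary~\ref{cor:Newelski}, and the two supplementary steps --- the reduction to $\emptyset$-type definable generators and the type definable $\Rightarrow$ definable upgrade --- are both routine. The one thing to be careful about is simply to verify, as above, that every hypothesis of Corollary~\ref{cor:Newelski} genuinely holds, in particular that $G$ is generated by \emph{parameter-free} type definable sets.
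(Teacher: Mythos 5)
Your proof is correct and follows essentially the same route as the paper: invoke Corollary~\ref{cor:Newelski}(2) with $H_0=H$ to conclude $G$ is type definable, then note that $H\setminus G$ is a finite union of cosets and hence type definable, so $G$ is definable by compactness. You spell out two preliminary checks the paper leaves implicit (passing to $\emptyset$-type definable generators via $\emptyset$-invariance, and adding constants so $H$ becomes $\emptyset$-definable), but this is careful hypothesis-verification rather than a different argument.
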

\begin{proof}
By Corollary \ref{cor:Newelski} (2), $G$ must be type definable
over $\emptyset$. But then its complement is also type definable
since it is a finite union of type definable sets, so it is definable
by compactness. \end{proof}
\begin{example}
(with Pierre Simon) Theorem \ref{thm: countable groups} does not
hold in a very strong sense, if the group $G$ is only $\emptyset$-invariant
and not pseudo $F_{\sigma}$. More precisely there is a countable
theory $T$ where Corollary \ref{cor:invariant + finite index implies definable}
fails.

Let $T$ be the theory of an infinite dimensional vector space over
$\Ff_{2}$ in the language $\left\{ +,0\right\} $. Add predicates
$U_{n}$ to the language and add axioms saying that $U_{n}$ are independent
subspaces of co-dimension $1$ (independent in the sense that any
finite Boolean combination is nonempty). Then $T$ is consistent as
one can take for $U_{n}$ the kernels of independent functionals.
Let $\C$ be a monster model for $T$, and let $H$ be the group $\left(\C,+\right)$.
Let $G$ be the intersection $\bigcap\set{U_{n}}{n<\omega}$. Then
the index $\left[H:G\right]=2^{\aleph_{0}}$. In fact, the cosets
of $G$ in $H$ are exactly the types $X_{\eta}=\bigcap\set{U_{n}^{\eta\left(n\right)}}{n<\omega}$
where $\eta:\omega\to2$ and $U_{n}^{0}=U_{n}$, $U_{n}^{1}=\C\backslash U_{n}$.
Pick a basis $\set{v_{i}}{i<2^{\aleph_{0}}}$ for the space $H/G$.
Any map $\eta:2^{\aleph_{0}}\to2$ defines a subspace $V_{\eta}$
by taking the kernel of the functional mapping $v_{i}$ to $\eta\left(i\right)$.
Obviously, if $\eta$ is not trivial then $\left[H:\pi^{-1}\left(V_{\eta}\right)\right]=2$
(where $\pi$ is the projection $H\to H/G$). So for at least one
$\eta$, $\pi^{-1}\left(V_{\eta}\right)$ is not definable. But all
of them are invariant as they are union of cosets $X_{\eta}$. 
\end{example}

\begin{example}
Corollary \ref{cor:Newelski} (5) does not hold when $G$ is pseudo
$G_{\delta}$. For instance, let $T=RCF$, and add to the language
constant symbols for the rational numbers $\mathbb{Q}$. Then $\mathbb{Q}$
itself is pseudo open (in every model), so also pseudo $G_{\delta}$,
but definitely not closed (every closed infinite subset must have
unbounded cardinality). 
\end{example}
\bibliographystyle{alpha}
\bibliography{common}

\end{document}